\documentclass[12pt]{amsart}
\usepackage[top=72pt,bottom=96pt,left=72pt,right=72pt]{geometry}
\usepackage{amsmath}
\usepackage{mathtools}
\usepackage{amssymb}
\usepackage{mathrsfs}
\usepackage{bbm}
\usepackage{extarrows}

\def\G{\mathcal{G}}
\def\H{\mathcal{H}}
\def\C{\mathbb{C}}

\def\N{\mathbb{N}}
\def\dvol{\mathrm{dvol}}
\def\YM{\mathrm{YM}}

\def\SM{\mathrm{SM}}

\def\YMSM{\mathrm{YMSM}}

\def\qS{\mathscr{S}}

\def\R{\mathbb{R}}
\def\F{\mathfrak{F}}
\def\f{\mathfrak{f}}
\def\qGG{\mathfrak{qGG}}
\def\Hor{\mathrm{Hor}}
\def\Ad{\mathrm{Ad}}
\def\ad{\mathrm{ad}}
\def\id{\mathrm{id}}

\def\Ker{\mathrm{Ker}}
\def\Im{\mathrm{Im}}

\def\H{\mathrm{Hor}}
\def\V{\mathrm{V}}
\def\Mor{\textsc{Mor}}
\def\N{\mathbb{N}}
\def\Obj{\textsc{Obj}}

\def\Rep{\mathbf{Rep}}
\def\triv{\mathrm{triv}}

\def\l{\mathrm{L}}

\def\End{\mathrm{End}}

\def\r{\mathrm{R}}
\def\Vert{\mathrm{Ver}}

\def\T{\mathcal{T}}

\newtheorem{Lemma}{Lemma}[section]
\newtheorem{Remark}[Lemma]{Remark}

\newtheorem{Theorem}[Lemma]{Theorem}
\newtheorem{Corollary}[Lemma]{Corollary}
\newtheorem{Definition}[Lemma]{Definition}
\newtheorem{Proposition}[Lemma]{Proposition}

\begin{document}
\date{\today}
\title{Quantum Principal Bundles and Yang--Mills Scalar Matter Fields}
\author{Gustavo Amilcar Salda\~na Moncada}
\address{Gustavo Amilcar Salda\~na Moncada\\
Mathematics Research Center, CIMAT}
\email{gustavo.saldana@cimat.mx, gamilcar@ciencias.unam.mx}
\begin{abstract}
The aim of this paper is to develop a \emph{non--commutative geometrical} version of the theory of Yang--Mills and space--time scalar matter fields for Riemannian manifolds. To achieve this, we \emph{dualize the geometrical formulation} of the \emph{classical} theory, in which principal $G$--bundles, principal connections, and linear representations play a central role. In addition, we introduce the \emph{non--commutative geometrical action} of the system together with the corresponding \emph{non--commutative geometrical field equations}. Finally, we conclude by presenting two illustrative examples.

 \begin{center}
  \parbox{300pt}{\textit{MSC 2010:}\ 46L87, 58B99.}
  \\[5pt]
  \parbox{300pt}{\textit{Keywords:}\ Yang--Mills Theory, Scalar Matter fields, Quantum Bundles, Quantum  Connections.}
 \end{center}
\end{abstract}
\maketitle 
\section{Introduction}

The Standard Model of Elementary Particles is one of the most successful and important theoretical achievements in modern physics. From a philosophical and mathematical point of view, it provides yet another example of the intrinsic relationship and deep interplay between physics and differential geometry, which in this case is realized through the geometrical framework of principal bundles, their connections, and the associated structures.

Despite these successes, the Standard Model presents some basic and fundamental problems that it cannot address. One prominent example is the lack of a consistent and coherent description of space--time at the Planck scale. The need for further investigation is therefore evident. 

Non--commutative geometry, also known as \emph{quantum geometry}, emerges as an algebraic and physical generalization of \emph{classical} geometrical concepts \cite{con,prug,woro0}. There are several reasons to believe that this branch of mathematics may contribute to resolving some of the fundamental problems of the Standard Model. The reader is encouraged to consult references \cite{con,wit,liz,dmin2} and subsequent works related to models of space--time at the Planck scale formulated in terms of non--commutative algebras.

The purpose of this paper is to develop a \emph{non--commutative geometrical version of the theory of Yang--Mills scalar matter fields} for Riemannian manifolds in the framework of M. Durdevich's formulation of quantum principal bundles and in agreement with references \cite{qvbH,lrz,z,l}. To this end, we \emph{dualize the geometrical formulation} of the \emph{classical} theory, in which principal $G$--bundles, principal connections, and linear representations play a central role. Concretely, we introduce a \emph{non--commutative geometrical action} for the system, together with the corresponding \emph{non--commutative geometrical field equations}. Two illustrative examples are presented at the end of the paper and it is worth mentioning that this work continues the development of the theory initiated in \cite{sald2}.

The importance of this paper lies not only in its geometrical approach, but also in the generality of the theory, which can be applied to a wide class of quantum principal bundles  (\cite{sald2,bdh,micho2}). Furthermore, this work opens the door to a \emph{geometrical} formulation of the Standard Model within the framework of quantum principal bundles and its associated structures, such as spin geometry, as well as to the investigation of Grand Unification Theories, as the reader can see in Section 5.

This work is organized into six sections. After this introduction, Section 2 presents the basic concepts of quantum groups, quantum principal bundles, and \emph{left/right} associated quantum vector bundles. We have deliberately taken the time to provide the reader with a clear and coherent context for Durdevich's formulation of quantum principal bundles, since it is not widely known and the reader will need to become familiar with its essential aspects in order to follow the theory developed in the subsequent sections.

In Section~3, the  novel part of the paper begins. In this section, we develop the theory of the \emph{left/right} quantum Hodge $\star$ operator, as well as the corresponding \emph{left/right} quantum codifferential for a \emph{quantum} Riemannian metric. Moreover, by considering left/right associated quantum vector bundles, we introduce the \emph{non--commutative geometrical} counterparts of the formal adjoint operators of the exterior covariant derivatives.

Building on Sections 2 and 3, Section 4 is the core of this paper, since in this section we develop the theory of Yang--Mills fields and space--time scalar matter fields, which is the purpose of this paper. We begin with \emph{pure} Yang--Mills fields and then we treat space--time scalar matter fields coupled to Yang--Mills fields. In Section~5, in order to keep the length of the paper reasonable, we present two representative classes of quantum principal bundles to which our theory applies: quantum principal bundles over \emph{classical spaces} (manifolds) and trivial quantum principal bundles over the Moyal--Weyl algebra. The last section is devoted to some concluding remarks. It is worth mentioning that concrete examples that illustrate the theory developed here can be found in \cite{sald3,sald4,sald5,sald6}.

Appendix A contains the explicit proof of a statement from Section 3.2, which is used in Section 4.1 and Appendix B collects several propositions concerning the formal adjointability of $A$--linear operators between finitely generated projective $A$--modules. Finally, in Appendix~C we show that the \emph{non--commutative geometrical field equations} of the theory developed in Section~4 coincide with their counterparts in differential geometry. Moreover, Appendix~C also serves to demonstrate the \emph{naturality} of our construction (which is given solely by taking the \emph{pull--back}), showing how the present framework extends and generalizes the \emph{classical} case.

It is worth mentioning that, by requiring \emph{quantum} pseudo--Riemannian metrics, all our results remain valid. References~\cite{sald5,sald6} provide concrete examples of this.

Other viewpoints on quantum bundles can be found in the literature; see, for example, \cite{bm,bu,pl}. All these formulations are intrinsically related through the theory of \emph{Hopf--Galois extensions} \cite{kt}. Moreover, there exist alternative approaches to Yang--Mills theory in non--commutative geometry, such as those developed in \cite{con,a,Dj,ch}, where the authors work directly with \emph{quantum vector bundles} and the framework of \emph{spectral triples}.

We have chosen to work with quantum principal bundles in this paper because we believe that a Yang--Mills scalar matter theory in non--commutative geometry should be formulated in terms of principal bundles and corepresentations, just as the \emph{dualization via the pull--back} of the the \emph{classical} case indicates. Furthermore, we adopt Durdevich’s formulation of quantum principal bundles due to its purely geometrical--algebraic nature, a feature that will become evident throughout this work. 

\section{Preliminaries}

In this section, we are going to show the highlights of the theory of $\ast$--Hopf algebras, quantum principal bundles and associated quantum vector bundles within Durdevich's formulation of quantum principal bundles \cite{sald2,micho2,woro1,woro2,micho1,micho3,stheve,micho7,sald1}. In other words, this section merely provides the necessary context so that, in a first reading, the reader does not need to consult other works in order to understand the theory developed in Sections 3 and 4. However, since Durdevich's formulation is not widely known, we do not recommend skipping this section.

 It is worth mentioning that, throughout this work, we use Sweedler’s notation and retain the notation for Durdevich's formulation introduced in~\cite{sald2}, since 
 \begin{enumerate}
    \item This paper is a direct continuation of reference \cite{sald2}.
    \item The notation used in~\cite{sald2} for Durdevich's formulation is closely related to that of the Brzezi\'nski--Majid formulation \cite{libro} (which is the framework more commonly used in the literature), while at the same time emphasizing the mathematical differences between the two formulations.
\end{enumerate}
In particular, all our quantum space will be represented by unital $\ast$--algebras; so in general, we will omit the word \emph{unital}.

\subsection{$\ast$--Hopf Algebras and their Quantum Differential Forms}

This subsection is brief summary of references \cite{micho2,woro1,woro2,micho1,stheve}. Consider $(H,\cdot,\mathbbm{1},\ast)$ a $\ast$--algebra.  We say that  it is a $\ast$--Hopf algebra if there exist $\ast$--algebra morphisms $$\Delta:H\longrightarrow H\otimes H,\qquad \epsilon:H \longrightarrow \C $$ called {\it the coproduct} and {\it the counit}, respectively, and there exists a linear map
    $$S:H\longrightarrow H$$ called {\it the antipode}
    such that 
\begin{equation*}
\begin{aligned}
    (\id_H\otimes \Delta)\circ  \Delta &=(\Delta \otimes \id_H)\circ \Delta,\\  (\epsilon \otimes \id_H)\circ \Delta = \id_H,&\qquad
     (\id_H\otimes \epsilon)\circ \Delta=\id_H,\\
     m\circ (S\otimes \id_H)\circ \Delta=\eta \circ \epsilon \;\;\;\;\; &\mbox{ and } \;\;\;\;\;m\circ (\id_H\otimes S )\circ \Delta=\eta \circ \epsilon,
\end{aligned}
\end{equation*}
where $\eta: \C\longrightarrow H$ is the linear map defined by $\eta(\lambda)=\lambda \mathbbm{1}$. A $\ast$--Hopf algebra will be represented by 
\begin{equation}
    \label{2.f0}
    H^\infty=(H,\cdot,\mathbbm{1},\Delta,\epsilon,S,\ast).
\end{equation}

\begin{Definition}
    \label{corepdef}
    Let $H^\infty$ be a $\ast$--Hopf algebra and let $V$ be a $\C$--vector space $V$. A linear map $$\delta^V : V \longrightarrow V \otimes H$$ is a (right) $H$--corepresentation on $V$ (or a coaction) if
    \begin{equation}
\label{2.f1}
(\id_V \otimes \epsilon)\circ \delta^V = \id_V, 
\qquad 
(\id_V \otimes \Delta)\circ \delta^V = (\delta^V \otimes \id_H)\circ \delta^V.
\end{equation}
\noindent
We say that $\delta^V$ is finite--dimensional if $\dim_{\C}(V)$ is finite.  
\end{Definition}

A $H$--corepresentation $\delta^V$ is called reducible if there exists a non--trivial subspace $L$ such that $\delta^V(L)\subseteq L\otimes H$.

On the other hand, according to \cite{woro1}, the Definition \ref{corepdef} is equivalent to the following: a $H$--corepresentation is an invertible element $\delta^V$ of $B(V)\otimes H$ such that $$(\mathbbm{1}\otimes \Delta)\delta^V=\delta^V_{12}\,\delta^V_{13},$$ where we have used leg--numbering notation, and $B(V)$ is the space of all linear endomorphism of $V$. In this way, a corepresentation $\delta^V$ is called unitary if there exists an inner product $\langle -|- \rangle$ on $V$ (not necessarily unique) such that $\delta^V$ is a unitary element of $B(V)\otimes H$. In reference \cite{woro1} it is proved that every finite--dimensional $H$--corepresentation on $V$ is unitary for some (not necessarily unique) inner product. In this way, in the rest of this paper, we will always assume that every finite--dimensional $H$--corepresentation is unitary.

\begin{Definition}
    \label{morrepdef}
    Given two $H$--corepresentations $\delta^V$ and $\delta^W$, a corepresentation morphism is a linear map
\begin{equation}
\label{2.f3}
T:V\longrightarrow W 
\qquad \text{such that} \qquad 
(T\otimes \id_H)\circ \delta^V=\delta^W \circ T .
\end{equation}
The set of all corepresentation morphisms between $\delta^V$ and $\delta^W$ will be denoted by
\begin{equation}
\label{2.f4}
\Mor(\delta^V,\delta^W),
\end{equation}
and the set of all finite--dimensional $H$--corepresentations will be denoted by
\begin{equation}
\label{2.f6}
\Obj(\Rep_{H}).
\end{equation}
\end{Definition}

In this paper, compact matrix quantum groups, or simply quantum groups~(\cite{woro1}), will be denoted by $\mathcal{G}$. In accordance with~\cite{woro1}, every quantum group $\mathcal{G}$ possesses a dense $\ast$--Hopf (sub)algebra $H^\infty$. This $\ast$--Hopf algebra will be interpreted as the algebra of \emph{$\C$--valued polynomial functions} on $\mathcal{G}$. 

\begin{Remark}
\label{groupsandalgebras}
From now on, we will work exclusively with $\ast$--Hopf algebras $H^\infty$ arising from quantum groups $\mathcal{G}$. 
\end{Remark}

The assumption of Remark \ref{groupsandalgebras} is taken due to the fact that the following theorem holds for $\ast$--Hopf algebras of this kind. The reader can find a proof of this theorem in reference \cite{woro1}.

\begin{Theorem}
\label{rep}
Let $\T$ be a complete set of mutually non--equivalent irreducible (necessarily finite--dimensional) 
$H$--corepresentations with $\delta^\C_{\triv}\in\T$ (the trivial corepresentation on $\C$).  
For any $\delta^V\in\T$ acting on $V$,
\begin{equation}
\label{2.f8}
\delta^V(e_j)=\sum^{n_{V}}_{i=1} e_i\otimes g^{V}_{ij},
\end{equation}
where $\{e_i\}_{i=1}^{n_V}$ is an orthonormal basis of $V$ and 
$\{g^{V}_{ij}\}_{i,j=1}^{n_V}\subseteq H$.  
Then, the collection $\{g^{V}_{ij}\}_{\delta^V,i,j}$ is a linear basis of $H$, where $\delta^V$ runs over $\T$ and 
$i,j$ run from $1$ to $n_V=\dim_{\C}(V)$.
\end{Theorem}

Now, consider a first--order differential $\ast$--calculus  over $H$ (abbreviated ``$\ast$--FODC'') $$(\Gamma,d),\qquad d:H\longrightarrow \Gamma.$$ This is a standard notion in non--commutative geometry; however, for readers unfamiliar with the concept, we recommend~\cite{stheve} as a basic reference.  We say that $(\Gamma,d)$ is right covariant if there exists a linear map
\begin{equation}
    \label{2.f9.1}
    {_\Gamma}\Phi:\Gamma \longrightarrow \Gamma\otimes H  
\end{equation}
such that
\begin{enumerate}
    \item  ${_\Gamma}\Phi$ preserves the $\ast$--structure and ${_\Gamma}\Phi(\vartheta\,g)={_\Gamma}\Phi(\vartheta)\Delta(g)$ for all $\vartheta$ $\in$ $\Gamma$ and all $g$ $\in$ $H$. Here, the $\ast$--structure of $\Gamma\otimes H$ is given by $(\vartheta\otimes g)^\ast:=\vartheta^\ast\otimes g^\ast$.
    \item ${_\Gamma}\Phi$ is a (right) $H$--corepresentation on $\Gamma$.
    \item ${_\Gamma}\Phi\circ d=(d\otimes \id_H)\circ \Delta$.
\end{enumerate}
In a similar manner, using a linear map 
\begin{equation}
    \label{2.f9.2}
    \Phi{_\Gamma}:\Gamma \longrightarrow H\otimes \Gamma,  
\end{equation}
we can define left covariant $\ast$--FODC's (\cite{stheve}). Finally, we say that a $\ast$--FODC $(\Gamma, d)$ is \emph{bicovariant} 
if it is both left covariant and right covariant. 

The reader can check a proof of the following proposition in Section 6 of reference \cite{stheve}.
\begin{Proposition}
    \label{0U}
    Let  $\mathcal{R}$ $\subseteq$ $\Ker(\epsilon)$ be a right $H$--ideal such that $S(\mathcal{R})^\ast\subseteq \mathcal{R}$ and $\Ad(\mathcal{R})\subseteq \mathcal{R}\otimes H$, where
\begin{equation}
    \label{2.f9.5}
    \Ad:H\longrightarrow H\otimes H,\qquad g\longmapsto g^{(2)}\otimes S(g^{(1)})\,g^{(3)}
\end{equation}
is the (right) adjoint coaction. Then 
    \begin{equation}
    \label{2.f9.4}
    (\Gamma:=H\otimes {\Ker(\epsilon)\over \mathcal{R}},d),
\end{equation}
defines a $\ast$--FODC over $H$ for some  differential $d$. Moreover, this $\ast$--FODC is bicovariant for some   linear maps ${_{\Gamma}}\Phi$, $\Phi_{{\Gamma}}$.

Reciprocally, every bicovariant $\ast$--FODC $(\Gamma,d)$ on $H$ is isomorphic to the one in equation (\ref{2.f9.4}) for some   right $H$--ideal $\mathcal{R}$ $\subseteq$ $\Ker(\epsilon)$ such that $S(\mathcal{R})^\ast\subseteq \mathcal{R}$ and $\Ad(\mathcal{R})=\mathcal{R}\otimes H$.
\end{Proposition}

Let $(\Gamma,d)$ be a bicovariant $\ast$--FODC over $H$ and consider the $\C$--vector space given by
\begin{equation}
\label{2.f13}
\mathfrak{qg}^\#:=\{\theta \in \Gamma \mid \Phi_\Gamma(\theta)=\mathbbm{1}\otimes \theta  \}   =\mathbbm{1}\otimes {\Ker(\epsilon)\over \mathcal{R}}\cong {\Ker(\epsilon)\over \mathcal{R}}.
\end{equation}

\begin{Definition}
    \label{addef}
    We define the adjoint $H$--corepresentation on $\mathfrak{qg}^\#$ as the linear map
    \begin{equation}
\label{2.f15}
\ad: \mathfrak{qg}^\#\longrightarrow \mathfrak{qg}^\#\otimes H
\end{equation} given by
    \begin{equation}
\label{2.f15.1}
\ad\circ \pi= (\pi\otimes \id_H)\circ \Ad.
\end{equation}
\end{Definition}
In the previous definition, the linear map
\begin{equation}
\label{2.f14}
\pi:H \longrightarrow \mathfrak{qg}^\#,\qquad
g \longmapsto S(g^{(1)})dg^{(2)}
\end{equation}
is the quantum germs map (see Section 6.4 of reference \cite{stheve}). It is worth mentioning that $\pi$ has several useful properties, for example, $\pi|_{\Ker(\epsilon)}$ is surjective, and $$ \Ker(\pi)=\C\mathbbm{1}\oplus \mathcal{R}, \qquad \pi(g)^\ast=-\pi(S(g)^\ast)\;\;\;\mbox{ and }\;\;\; dg=g^{(1)}\pi(g^{(2)})\;\; \mbox{ for all }\;\ g\in H.$$ There is a well--defined right $H$--module structure on $\mathfrak{qg}^\#$ given by 
\begin{equation}
\label{2.f16}
\theta  \diamondsuit  g:=\pi(hg-\epsilon(h)g)=S(g^{(1)})\theta g^{(2)}
\end{equation}
for every $\theta=\pi(h)$ $\in$ $\mathfrak{qg}^\#$, $g$ $\in$ $H$ (\cite{stheve}).

Let $H^\infty$ be a $\ast$--Hopf algebra and consider  $(\Gamma,d)$ a $\ast$--FODC over $H$. Now, we can consider its \emph{universal differential envelope $\ast$--calculus} (\cite{micho1,stheve})
  \begin{equation}
    \label{2.f10.1}
     (\Gamma^\wedge,d,\ast).
\end{equation}
This space is a graded differential $\ast$--algebra generated by its space of $0$--degree elements $\Gamma^{\wedge\,0}=H$. In addition, its space of $1$--degree elements is $\Gamma^{\wedge\,1}=\Gamma$. The reader can check the explicit  construction of $(\Gamma^\wedge,d,\ast)$ in references \cite{micho1,stheve}; here we will focus only in present the more relevant aspects of this space for our purposes. 

For example, the space $(\Gamma^\wedge,d,\ast)$ is actually the maximal prolongation of $(\Gamma,d)$. That is, $(\Gamma^\wedge,d,\ast)$ is the biggest graded differential $\ast$--algebra generated by $H$ (the space of $0$--degree elements) with $\Gamma$ as the space of $1$--degree elements. Moreover, the following proposition holds; a proof of this proposition may be found in~\cite{micho1,stheve}.

\begin{Proposition}
\label{1U}
Suppose $(\Omega,d_{\Omega},\ast)$ is a graded differential $\ast$--algebra generated by $\Omega^0=H$ and $(\Gamma,d)$ is a $\ast$--FODC over $H$. Let $\varphi^{0}:\Gamma^{\wedge\,0}=H\longrightarrow  \Omega^0=H$ be a $\ast$--algebra morphism and $\varphi^{1}:\Gamma\longrightarrow \Omega $ be a linear map such that $\varphi^{1}(g\,dh)=\varphi^{0}(g)\,d_{\Omega}(\varphi^{0}(h))$ for all $g$, $h$ $\in$ $H$. Then, there exists a unique family of linear maps $\varphi^{k}:\Gamma^{\wedge k}\longrightarrow \Omega$ such that $$\varphi:=\displaystyle\bigoplus_k \varphi^{k}: \Gamma^{\wedge}\longrightarrow \Omega $$ is a graded differential $\ast$--algebra morphism.
\end{Proposition}

We are particularly interested in the universal differential envelope $\ast$--calculus $(\Gamma^\wedge,d,\ast)\cong (H\otimes \mathfrak{qg}^{\#\wedge},d,\ast)$ of a bicovariant $\ast$--FODC $(\Gamma,d)$. In this situation, the structure of $\ast$--Hopf of $H$ $$\Delta:H\longrightarrow H\otimes H,\qquad \epsilon:H\longrightarrow \C, \qquad S:H\longrightarrow H $$ can be extended to
     \begin{equation}
\label{2.f3.5}
\Delta: \Gamma^\wedge \longrightarrow \Gamma^\wedge \otimes  \Gamma^\wedge \qquad \epsilon:\Gamma^\wedge\longrightarrow \C, \qquad S:\Gamma^\wedge\longrightarrow \Gamma^\wedge
\end{equation}
(the tensor product of $\Gamma^\wedge \otimes  \Gamma^\wedge$ is the tensor product of graded differential $\ast$--algebras) in such a way that 
\begin{equation}
\label{2.f3.8}
\Gamma^{\wedge\,\infty}:=(\Gamma^\wedge,\cdot,\mathbbm{1},\Delta,\epsilon, S,d,\ast)
\end{equation}
is a graded differential $\ast$--Hopf algebra.  Full details can be found in Appendix $B$ of reference \cite{micho1}. For example, the following formula holds for every $\theta$ $\in$ $\mathfrak{qg}^\#$ (\cite{micho1,stheve}) 
\begin{equation}
    \label{loquesea3}
    \Delta(\theta)=\mathbbm{1}\otimes \theta+\ad(\theta).
\end{equation}

It is worth mentioning that the right $H$--module structure of $\mathfrak{qg}^\#$ (see equation (\ref{2.f16})) can be extended to $\mathfrak{qg}^{\#\wedge}$ by means of 
\begin{equation}
    \label{2.f12.4}
    \mathbbm{1}\diamondsuit g=\epsilon(g),\quad (\theta_1\theta_2)\diamondsuit g=(\theta_1\circ g^{(1)})(\theta_2\diamondsuit g^{(2)}).
\end{equation}
for all $g$ $\in$ $H$, $\theta_1$, $\theta_2$ $\in$ $\mathfrak{qg}^\#$. In addition, since  $\Gamma^{\wedge}$ has structure of $\ast$--Hopf algebra, it has a (right) adjoint corepresentation
\begin{equation}
\label{2.f11}
\Ad:\Gamma^\wedge \longrightarrow \Gamma^\wedge \otimes \Gamma^\wedge
\end{equation}
such that $$\Ad(t)=(-1)^{\partial t^{(1)}\partial t^{(2)}} t^{(2)}\otimes S(t^{(1)})t^{(3)},$$ where $\partial x$ denotes the grade of $x$ and $(\id_{\Gamma^\wedge}\otimes \Delta)\Delta(t)=(\Delta\otimes \id_{\Gamma^\wedge})\Delta(t)=t^{(1)}\otimes t^{(2)}\otimes t^{(3)}.$ Clearly, $\Ad$ extends the adjoint coaction $\Ad:H\longrightarrow H\otimes H$.

Furthermore, for a bicovariant $\ast$--FODC $(\Gamma,d)$ over $H$, the following isomorphism holds (\cite{micho1,tomas})
\begin{equation}
    \label{2.f12.3}
    \Gamma^{\wedge}\cong H\otimes \mathfrak{qg}^{\#\wedge},
\end{equation} 
where
\begin{equation}
    \label{2.f12.1}
    \begin{aligned}
          \mathfrak{qg}^{\#\wedge}=\otimes^\bullet \mathfrak{qg}^\#/A^\wedge,&\qquad\otimes^\bullet\mathfrak{qg}^\#:=\bigoplus_k (\otimes^k\mathfrak{qg}^\#),\qquad
\otimes^k\mathfrak{qg}^\#:=\underbrace{\mathfrak{qg}^\#\otimes\cdots\otimes \mathfrak{qg}^\#}_{k\; times},\\
 &\otimes^1\mathfrak{qg}^\#=\mathfrak{qg}^\#, \qquad \otimes^0\mathfrak{qg}^\#=\C\mathbbm{1}
    \end{aligned}
\end{equation}
 with $A^\wedge$ the two--side ideal of $\otimes^\bullet\mathfrak{qg}^\#$ generated by elements 
\begin{equation}
    \label{2.f12.11}
    \pi(g^{(1)})\otimes \pi(g^{(2)})\qquad \mbox{ for all }\qquad g \,\in\, \mathcal{R}.
\end{equation}
According to \cite{micho1}, the Maurer--Cartan equation holds
 \begin{equation}
    \label{2.f12.2}
    d\pi(g)=-\pi(g^{(1)})\pi(g^{(2)})
\end{equation}
(the product in $\mathfrak{qg}^{\#\wedge}$ is denoted by juxtaposition of elements) for all $g$ $\in$ $H$.

For a given quantum group $\G$ and a bicovariant $\ast$--FODC $(\Gamma,d)$ on $H$, the triple $$(\Gamma^\wedge,d,\ast)$$ will be interpreted as the algebra of {\it quantum differential forms} of $H$. Additionally,  the space 
$$\mathfrak{qg}^\#={\Ker(\epsilon)\over \mathcal{R}}$$ will play the role of the {\it quantum dual Lie algebra} and the  $H$--corepresentation $\ad$ of Definition \ref{addef} will play the role of the {\it dualization} of the right adjoint action of a Lie group on its Lie algebra. Reference~\cite{appendix} shows that these concepts constitute genuine \emph{non--commutative geometrical} generalizations of the corresponding \emph{classical} constructions associated with compact matrix Lie groups.

\subsection{Quantum Principal Bundles}

As mentioned before, this work is developed within M. Durdevich’s formulation of quantum principal bundles. Of course, there are mathematical reasons to adopt this approach (see Section 6) rather than Brzezi\'nski–Majid formulation \cite{libro}. Since Durdevich’s framework is not widely known, we will take some time to explain in some detail the basic constructions and motivations of the parts of  Durdevich’s theory used for this paper; while commenting on some differences and similarities with the Brzezi\'nski -Majid framework. The reader is encouraged to consult the original work \cite{micho2,micho1,micho3} for further details.

According to Section 12 of reference \cite{stheve}, the following definition comes from \emph{dualizing via the pull--back} the definition of a principal $G$--bundle in differential geometry.
 
\begin{Definition}
    \label{qpbdef}
    Let $(P,\cdot,\mathbbm{1},\ast)$ be a quantum space and let $\G$ be a quantum group with its associated $\ast$--Hopf algebra $H^\infty=(H,\cdot,\mathbbm{1},\ast,\Delta,\epsilon,S)$. A {\it quantum principal $\G$--bundle}  (abbreviated ``qpb") is a triple 
\begin{equation*}
\zeta=(P,H,\Delta_P),
\end{equation*}
where $(P,\cdot,\mathbbm{1},\ast)$ is called the {\it quantum total space}, and  $$\Delta_P:P \longrightarrow P\otimes H$$ is a $\ast$--algebra morphism that satisfies
\begin{enumerate}
\item $\Delta_P$ is a $H$--corepresentation.
\item The linear map $\beta:P\otimes P\longrightarrow P\otimes H$ given by $$\beta(x\otimes y):=x\cdot \Delta_P(y):=(x\otimes \mathbbm{1})\cdot \Delta_P(y) $$ is surjective. 
\end{enumerate}
In this situation, 
\begin{equation}
    \label{basespacedef}
    B=\{x\in P \mid \Delta_P(x)=x\otimes \mathbbm{1} \}
\end{equation}
is a $\ast$--(sub)algebra which receives the name of quantum base space. It is common to say that $\zeta$ is a qpb over $B$.
\end{Definition}

In general, Durdevich’s formulation does not require working with $\ast$--Hopf algebras arising from quantum groups (\cite{stheve}), but the previous definition is appropriated for the purpose of this paper. Furthermore, it is worth mentioning that under Definition \ref{qpbdef}, every qpb is a Hopf--Galois extension (\cite{libro}).

In this paper, the quantum base space $(B,\cdot,\mathbbm{1},\ast)$ plays a fundamental role, as it represents the \emph{non--commutative geometrical} analog of space--time in theoretical physics. Thus, in order to emphasize its importance, we will denote quantum principal bundles as   
\begin{equation}
\label{2.f17}
\zeta=(P,B,\Delta_P).
\end{equation}

In differential geometry, given a\footnote{$GM$ is the total space, $M$ is the base space, $G$ is a Lie group and $\pi$ is the bundle projection. For this paper, we will consider that $GM$ and $M$ are compact manifold, and $G$ is a compact matrix Lie group.} principal $G$--bundle $\pi:GM\longrightarrow M$, it is well--known that\footnote{$TGM$ is the tangent bundle of the total space, $TM$ is the tangent bundle of the base space, $TG$ is the tangent bundle of $G$ and $d\pi$ is the differential of $\pi$.} $d\pi:TGM\longrightarrow TM$ is principal $TG$--bundle (\cite{nodg}). This fact motivates the following definition in Durdevich's formulation of qpb's. 

\begin{Definition}
    \label{difcaldef}
    Let $\zeta=(P,B,\Delta_P)$ be a qpb  over $B$. A differential calculus on $\zeta$ is a triple $$ (\Omega^\bullet(P),\Gamma^\wedge,\Delta_{\Omega^\bullet(P)}),$$ where 
 \begin{enumerate}
 \item The space $(\Omega^\bullet(P),d,\ast)$ is a graded differential $\ast$--algebra generated by $\Omega^0(P)=P$ ({\it quantum differential forms of $P$}).
 \item The space $(\Gamma^\wedge,d,\ast)$ is the universal differential envelope $\ast$--calculus for some fixed bicovariant $\ast$--FODC $(\Gamma,d)$ over $H$.
 \item The map  $$\Delta_{\Omega^\bullet(P)}:\Omega^\bullet(P)\longrightarrow \Omega^\bullet(P)\otimes \Gamma^{\wedge}$$ is a graded differential $\ast$--algebra morphism such that $\Delta_{\Omega^\bullet(P)}|_P=\Delta_P$.  Here, we have considered that $\otimes$ is the tensor product of graded differential $\ast$--algebras.
 \end{enumerate}
 In this situation, 
 \begin{equation}
\label{2.f20}
\Omega^\bullet(B):=\{\mu \in \Omega^\bullet(P)\mid \Delta_{\Omega^\bullet(P)}(\mu)=\mu\otimes \mathbbm{1}\}
\end{equation}
is a graded differential $\ast$--(sub)algebra which receives the name of space of base forms.
\end{Definition}

Notice that the purpose of the previous definition is to ensure that $$(\Omega^\bullet(P),\Omega^\bullet(B),\Delta_{\Omega^\bullet(P)})$$ is a qpb, as the reader can verify in~\cite{micho5}, where the author follows this approach and also works with the quantum translation map in higher degrees. Moreover, it is worth mentioning that in general, the space of base forms is not generated by $\Omega^0(B)=B$. An explicit example of this fact can be found in reference \cite{appendix}. Notice that in Brzezi\'nski--Majid formulation of qpb’s, the space of base forms is defined only in degree $1$ elements and is required to be generated by $B$.

\begin{Definition}
    \label{horverdef}
    Let $\zeta$ be a qpb over $B$ with a differential calculus. We define the space of horizontal forms as \begin{equation}
\label{2.f18}
\Hor^\bullet P\,:=\{\varphi \in \Omega^\bullet(P)\mid \Delta_{\Omega^\bullet(P)}(\varphi)\, \in \, \Omega^\bullet(P)\otimes H \},
\end{equation}
and it is a graded  $\ast$--subalgebra of $\Omega^\bullet(P)$  (\cite{stheve}). 
\end{Definition}

Since $\Delta_{\Omega^\bullet(P)}(\Hor^\bullet P)\subseteq \Hor^\bullet P\otimes H,$ the map 
\begin{equation}
\label{2.f19}
\Delta_\Hor:=\Delta_{\Omega^\bullet(P)}|_{\Hor^\bullet P}: \Hor^\bullet P \longrightarrow \Hor^\bullet P\otimes H
\end{equation}
is a $H$--corepresentation on $\Hor^\bullet P$ \cite{stheve}. 

In Brzezi\'nski--Majid formulation of qpb's, the horizontal space is defined only in degree $1$ elements by the $P$--bimodule $P\,\Omega^1(B)\,P$ (\cite{libro}) and clearly, this space is a subspace of $\Hor^1 P$. However, in principle, this inclusion can be proper \cite{tomas}.

\begin{Definition}
    \label{verdef}
    The space of vertical forms is introduced as the graded vector space  
\begin{equation}
    \label{2.fver1}
    \Vert^\bullet P:= P\otimes \mathfrak{qg}^{\#\wedge} 
\end{equation}
with the operations (see equation (\ref{2.f12.1}))
\begin{equation}
    \label{2.fver2}
    \begin{aligned}
        (x\otimes\theta)(y\otimes \vartheta):=xy^{(0)}\otimes (\theta\diamondsuit y^{(1)})\vartheta,\\
    (x\otimes\theta)^{\ast}:=x^{(0)\ast}\otimes(\theta^{\ast}\diamondsuit x^{(1)\ast}),\\
        d_v(x\otimes \theta):=x\otimes d\theta+x^{(0)}\otimes\pi(x^{(1)})\theta,
    \end{aligned}
\end{equation}
where $x$, $y$ $\in$ $P$, $\theta$, $\vartheta$ $\in$ $\mathfrak{qg}^{\#\wedge}$, $\Delta_P(x)=x^{(0)}\otimes x^{(1)}$, $\Delta_P(y)=y^{(0)}\otimes y^{(1)}$ and $\theta \diamondsuit g$  is defined in equations (\ref{2.f16}), (\ref{2.f12.4}). With these operations, $\Vert^\bullet P$ is a graded differential $\ast$--algebra.
\end{Definition}

Notice that in Brzezi\'nski--Majid formulation of qpb's, the vertical space is defined only in degree $1$ elements by the $P$--bimodule $P\otimes \mathfrak{qg}^\#$ (\cite{libro}); without the other operations.

According to Lemma 3.1 of reference \cite{micho2}, $\Vert^\bullet P$ is generated by its degree $0$ elements $\Vert^0 P=P$. Moreover, in accordance with Lemma 3.2 of reference \cite{micho2}, the map 
$$\Delta_\Vert: \Vert^\bullet P: \longrightarrow \Vert^\bullet P\otimes \Gamma^\wedge \quad \mbox{ such that }\quad  \Delta_\Vert(x \otimes \theta)=x^{(0)}\otimes \theta^{(1)} \otimes x^{(0)}\theta^{(2)}$$ (with $\Delta(\theta)= \theta^{(1)} \otimes \theta^{(2)})$ is the unique graded differential $\ast$--algebra morphism that is also a $\Gamma^\wedge$--corepresentation and is $\Delta_P$ in the degree $0$ case.

The reader can find proofs of the two following propositions in Proposition 3.6 and Lemma 3.7 of reference \cite{micho2}. 

\begin{Proposition}
\label{piv}
The map $$\pi_\V:\Omega^\bullet(P)\longrightarrow \Vert^\bullet P $$ given by $$\pi_\V=(\id_{P}\otimes (\epsilon \otimes \id_{\mathfrak{qg}^{\#\wedge}}))\circ (\id_{\Omega^\bullet(P)}\otimes \rho_k) \circ \Delta_{\Omega^\bullet(P)}$$  is the unique graded differential $\ast$--algebra morphism  such that  $\pi_\V=\id_{P}$ in degree $0$, and $$\Delta_\Vert\circ \pi_\V  =(\pi_\V\otimes\id_{\Gamma^\wedge})\circ \Delta_{\Omega^\bullet(P)}.$$ Moreover, $\pi_\V$ is surjective. Here, $\rho_k:\Gamma^\wedge\longrightarrow \Gamma^{\wedge\, k}$ is the canonical projection onto the degree $k$ elements and we have considered that $\Gamma^\wedge=H \otimes \mathfrak{qg}^{\#\wedge}$ (see equation (\ref{2.f12.3})).
\end{Proposition}

\begin{Proposition}
    \label{seq}
    The following sequence of $\ast$--$P$--bimodules
\begin{equation}
\label{3.f1.4}
0\longrightarrow  \Hor^1 P \lhook\joinrel\relbar\joinrel\rightarrow  \Omega^1(P) \xlongrightarrow{\pi_\V} \Vert^1 P \longrightarrow 0
\end{equation}
is always exact. This sequence is called the Atiyah sequence.
\end{Proposition}

It is worth mentioning that in Brzezi\'nski--Majid formulation of qpb's, the exactness of the Atiyah sequence is a condition of the theory (\cite{libro}), while in Durdevich's formulation it is a result of the theory.

By {\it dualizing via the pull--back} the notion of principal connections in differential geometry (\cite{stheve}), we have the following definition in Durdevich's formulation of qpb's.
\begin{Definition}
    \label{qpc's}
    Let $\zeta$ be a qpb with a differential calculus. A quantum principal connection (abbreviated ``qpc") on $\zeta$ is a linear 
    $$\omega:\mathfrak{qg}^\#\longrightarrow \Omega^{1}(P)$$ such that for all $\theta$ $\in$ $\mathfrak{qg}^\#$
    \begin{equation}
    \label{qpc1}
    \Delta_{\Omega^\bullet(P)}(\omega(\theta))=(\omega\otimes \id_H)\ad(\theta)+\mathbbm{1}\otimes\theta \qquad \mbox{ and }\qquad
   \omega(\theta^\ast)=\omega(\theta)^\ast,
\end{equation}
where $\ad$ is the $H$--corepresentation given in equation (\ref{2.f15}). Equivalently (\cite{micho2}), a quantum principal connection is a linear map $$\omega:\mathfrak{qg}^\#\longrightarrow \Omega^1(P)$$ such that for all $\theta$ $\in$ $\mathfrak{qg}^\#$
\begin{equation}
    \label{qpc3}
    \omega(\theta)\,\in\, \Mor(\ad,\Delta_{\Omega^\bullet(P)}),\quad  (\pi_\V\circ \omega)(\theta)=\mathbbm{1}\otimes \theta 
\quad \mbox{ and }\quad
         \omega(\theta^\ast)=\omega(\theta)^\ast,
\end{equation}
where
\begin{eqnarray*}
    \Mor(\ad,\Delta_{\Omega^\bullet(P)})=\{\psi:\mathfrak{qg}^\# \longrightarrow \Omega^\bullet(P) &\mid& \psi \mbox{ is linear and } \\ & &(\psi\otimes \id_{H})\circ \ad=(\id_{\Omega^\bullet(P)}\otimes \rho_0)\circ \Delta_{\Omega^\bullet(P)} \circ \psi\},
\end{eqnarray*}
with $\rho_0:\Gamma^\wedge\longrightarrow H$ the canonical projection map. 
\end{Definition}

In analogy with the {\it classical} case, it can be proved that the set 
\begin{equation}
\label{2.f24.1}
\mathfrak{qpc}(\zeta):=\{\omega:\mathfrak{qg}^\#\longrightarrow \Omega^1(P)\mid \omega \mbox{ is a qpc on }\zeta \}
\end{equation}
is not empty, and it is an affine space modeled by the $\R$--vector space of {\it connection displacements} (\cite{micho2,stheve}) 
\begin{equation}
 \label{2.f24.2}
\begin{aligned}
    \overrightarrow{\mathfrak{qpc}(\zeta)}:=\{\lambda:\mathfrak{qg}^\# \longrightarrow \Hor^1 P \mid & \,\lambda \mbox{ is linear and }\\
    &(\lambda\otimes \id_H)\circ \ad=\Delta_\Hor \circ \lambda,\;\; \ast\circ\lambda\circ \ast= \lambda \}.
\end{aligned}
\end{equation}

A qpc is called {\it regular} if for all $\varphi$ $\in$ $\Hor^{k}P$ and $\theta$ $\in$ $\mathfrak{qg}^\#$, we have 
\begin{equation}
\label{2.f25}
\omega(\theta)\,\varphi=(-1)^{k}\varphi^{(0)}\omega(\theta\diamondsuit \varphi^{(1)}), 
\end{equation}
where $\Delta_\Hor(\varphi)=\varphi^{(0)}\otimes\varphi^{(1)}.$ A qpc $\omega$ is called {\it multiplicative} if 
\begin{equation}
\label{2.f26}
\omega(\pi(g^{(1)}))\omega(\pi(g^{(2)}))=0
\end{equation}
for all $g$ $\in$ $\mathcal{R}$. 

In light of Theorem 12.8 of reference \cite{stheve}, qpc's are in bijection with left $P$--module splittings of the Atiyah sequence. We recommend to the interested reader to check this reference for more details.

Let $\pi:GM\longrightarrow M$ be a {\it classical} principal $G$--bundle with a principal connection $\omega_\mathrm{class}$. Consider $\Delta_\Hor$  the pull--back of the canonical $G$--action on the complexification of the horizontal bundle of $GM$. Then, the covariant derivative of $\omega_\mathrm{class}$ acting on horizontal $\C$--valued differential $k$--forms of $GM$ can be written in the form
\begin{equation}
    \label{dif0}
    \begin{aligned}
        D^{\omega_\mathrm{class}}(\eta)=d\eta-(-1)^k\,\eta^{(0)}\wedge \omega^\#_{\mathrm{class}}(\pi_{\mathrm{class}}(\eta^{(1)})),
    \end{aligned}
\end{equation}
 where $\wedge$ is the usual wedge product of $\C$--valued differential forms, $\Delta_\Hor(\eta)=\eta^{(0)}\otimes \eta^{(1)}$, $$\omega^\#_{\mathrm{class}}:\mathfrak{g}^\#_\C\longrightarrow \Omega^\bullet_\C(GM)$$  is the complex--extension of the pull--back of $\omega_{\mathrm{class}}$, and $$\pi_{\mathrm{class}}:C^\infty_{\mathrm{pol}}(G)\longrightarrow \mathfrak{g}^\#_\C,\qquad g\longmapsto S(g^{(1)})dg^{(2)}=(dg)_e $$ is the the corresponding \emph{classical} germs map, with $(dg)_e$ the differential at $e$ (the neutral element of $G$)  of an element of $C^\infty_{\mathrm{pol}}(G)$ (the space of polynomical $\C$--valued functions of $G$) \cite{appendix}. Here, $$\mathfrak{g}^\#_\C={\Ker(\epsilon)\over \Ker^2(\epsilon)} \qquad \mbox{ with }\qquad \Ker^2(\epsilon)=\left\{\sum_i g_i\,h_i\mid g_i,\,h_i\in \Ker(\epsilon) \right\}$$ is the dual space of the complexification $\mathfrak{g}_\C$ of the Lie algebra $\mathfrak{g}$ of $G$.  

Equation~(\ref{dif0}) motivates the following definition in both Durdevich’s and the Brzezi\'nski--Majid formulations of qpb’s (although, in the Brzezi\'nski--Majid formulation, the definition is given only for $0$--degree elements). It is worth mentioning that $\omega^\#_{\mathrm{class}}$ is a regular and multiplicative qpc \cite{micho1}.

\begin{Definition}
    \label{covdifdef}
   Let $\zeta=(P,B,\Delta_P)$ be a qpb with a differential calculus. For a given qpc $\omega$, we define its {\it covariant derivative} as the first--order linear map (\cite{micho3})
\begin{equation}
\label{2.f30}
D^{\omega}: \Hor^\bullet P \longrightarrow \Hor^\bullet P
\end{equation}
such that for every $\varphi$ $\in$ $\Hor^k P$ $$
D^{\omega}(\varphi)=  d\varphi-(-1)^{k}\varphi^{(0)}\omega(\pi(\varphi^{(1)})).$$ 
On the other hand, the first--order linear map 
\begin{equation}
\label{2.f30.1}
\widehat{D}^{\omega}:=\ast\circ D^{\omega}\circ\ast
\end{equation}
is called the {\it dual covariant derivative} of $\omega$. Explicitly, for every $\varphi$ $\in$ $\Hor^k P$, we have (\cite{micho3}) 
\begin{equation}
    \label{2.f30.5.1}
    \widehat{D}^{\omega}(\varphi)=d\varphi +\omega (\pi(S^{-1}(\varphi^{(1)})))\varphi^{(0)}.
\end{equation}
\end{Definition}

According to  \cite{micho3}, the following equation holds
\begin{equation}
    \label{covd.1}
    \widehat{D}^{\omega}(\varphi)=D^{\omega}(\varphi)+\ell^{\omega}(\pi(S^{-1}(\varphi^{(1)})),\varphi^{(0)}),
\end{equation}
where 
\begin{equation}
\label{something}
\begin{aligned}
\ell^{\omega}:\mathfrak{qg}^\#\times \Hor^\bullet P \longrightarrow \Hor^\bullet P, \qquad
(\theta,\varphi)\longmapsto \omega(\theta)\varphi-(-1)^k \varphi^{(0)}\omega(\theta\diamondsuit \varphi^{(1)}).
\end{aligned}
\end{equation}
The map $\ell^{\omega}$ {\it measures the degree of non--regularity} of $\omega$, in the sense of $\ell^{\omega}=0$ if and only if $\omega$ is regular. In this way, for regular qpc's we obtain $D^\omega=\widehat{D}^{\omega}$, which is the situation for qpc's of the form $\omega^\#_\mathrm{class}$ (\cite{micho1}). In other words, $D^\omega$ and $\widehat{D}^\omega$ are two different horizontal operators that generalize the covariant derivative of a principal connection in differential geometry. In the next section, we will work with both operators. 

Direct calculations prove that (\cite{micho2,micho3})
\begin{equation}
\label{2.f31}
D^{\omega}, \;\; \widehat{D}^{\omega}\, \in\,\Mor(\Delta_\Hor,\Delta_\Hor),
\end{equation}
among other properties \cite{micho3}.

Let $\pi:GM\longrightarrow M$ be a {\it classical} principal $G$--bundle with a principal connection $\omega_\mathrm{class}$. Then, the  complex--extension $\Omega^{\omega_\mathrm{class}}$ of  the curvature of $\omega_\mathrm{class}$ is defined as the $\mathfrak{g}_\C$--valued differential $2$--form of $GM$ given by (\cite{nodg})
\begin{equation}
    \label{classcur}
    \Omega^{\omega_\mathrm{class}}=d\omega_\mathrm{class} + {1\over 2}[\omega_\mathrm{class}\wedge \omega_\mathrm{class}].
\end{equation}
Since $\Omega^{\omega_\mathrm{class}}$ is $\mathfrak{g}_\C$--valued, it is natural to expect that, in the \emph{non--commutative geometrical} setting, the curvature of a qpc should be defined in $\mathfrak{qg}^\#$, as suggested by the \emph{dualization via the pull--back} of the \emph{classical} case. This is why in Durdevich's formulation of qpb's we do not define the curvature as in Brzezi\'nski--Majid formulation (\cite{libro}):
\begin{equation}
    \label{majidcurv}
    r^\omega: \Ker(\epsilon)\longrightarrow \Omega^2(P), \qquad g\longmapsto d\omega(\pi(g))+\omega(\pi(g^{(1)}))\,\omega(\pi(g^{(2)})).
\end{equation}

The pull--back of  $\Omega^{\omega_\mathrm{class}}$ is given by (\cite{micho1})
\begin{equation}
    \label{classcur2}
    d\omega^\#_\mathrm{class}+{1\over 2} \,m_{GM}(\omega^\#_\mathrm{class}\otimes \omega^\#_\mathrm{class})\circ c^T, 
\end{equation}
where  $m_{GM}$ is the product map of $\C$--valued differential forms of $GM$, and 
\begin{equation}
    \label{trans2}
    \mathrm{c}^T:= (\id_{\mathfrak{g}^\#_\C}\otimes \pi_\mathrm{class})\circ \ad^\#_\mathrm{class} : \mathfrak{g}^\#_\C\longrightarrow \mathfrak{g}^\#_\C\otimes \mathfrak{g}^\#_\C 
\end{equation}
is the complex--extension of the pull--back of the Lie commutator of $\mathfrak{g}$  (\cite{appendix}). Here, $\ad^\#_\mathrm{class}$ is the pull--back of the complex--extension of the right adjoint action of $G$ on $\mathfrak{g}$, and $\pi_{\mathrm{class}}$ is the classical germs map.

According to \cite{micho1}, we have that 
\begin{enumerate}
    \item $\displaystyle -{1\over 2}c^T \in \Mor(\ad^\#_\mathrm{class},\ad^\#_\mathrm{class}\otimes \ad^\#_\mathrm{class})$, where we consider that $\ad^\#$ is a  $C^\infty_{\mathrm{pol}}(G)$--corepresentation and the symbol $\otimes$ denotes the tensor product of corepresentations (\cite{woro1}).
    \item The Maurer--Cartan formula holds: for every $\theta$ $\in$ $\mathfrak{g}^\#_\C$, we get 
    \begin{equation*}
        d\theta=m_G(-{1\over 2}c^T(\theta)),
    \end{equation*}
    where $m_G$ is the product map of $\C$--valued differential forms of $G$.
\end{enumerate}
Notice that the Maurer--Cartan formula establishes a \emph{compatibility} between the Lie algebra structure of $\mathfrak{g}^\#_\C$ and its differential structure.

The linear map of equation (\ref{trans2}) can be easily generalized to the \emph{non--commutative geometrical} setting. In fact, for any bicovariant $\ast$--FODC $(\Gamma,d)$ over $H$ with quantum dual Lie algebra $$\mathfrak{qg}^\#={\Ker(\epsilon)\over\mathcal{R}},$$ we define the \emph{transpose commutator} as
\begin{equation}
    \label{trans1}
    \mathrm{c}^T:= (\id_{\mathfrak{qg}^\#}\otimes \pi)\circ \ad : \mathfrak{qg}^\#\longrightarrow \mathfrak{qg}^\#\otimes \mathfrak{qg}^\#, 
\end{equation}
where $$\pi: H\longrightarrow \mathfrak{qg}^\#$$ is the corresponding quantum germs map (see equation (\ref{2.f14})), and $\ad$ is the $H$--corepresentation of Definition \ref{addef}. In this way, given a qpb $\zeta=(P,B,\Delta_P   )$ with a differential calculus and following equation (\ref{classcur2}), one could take 
\begin{equation}
    \label{falsecur}
    d\omega+{1\over 2}\,m_\Omega(\omega\otimes \omega)\circ c^T
\end{equation}
as the definition of the curvature for a qpc $\omega$, with $m_\Omega: \Omega^\bullet(P)\otimes \Omega^\bullet(P)\longrightarrow \Omega^\bullet(P)$ the product map of $\Omega^\bullet(P)$.

However, in the \emph{non--commutative geometrical} setting, we have that (\cite{stheve}) $$\displaystyle -{1\over 2}c^T\; \in\; \Mor(\ad,\ad\otimes \ad),$$ but in general 
\begin{equation}
    \label{something1}
    d\theta \not=m_{\Gamma^\wedge}(\displaystyle -{1\over 2}c^T(\theta))
\end{equation}
for all $\theta$ $\in$ $\mathfrak{qg}^\#$, where $m_{\Gamma^\wedge}:\Gamma^\wedge\otimes \Gamma^\wedge\longrightarrow \Gamma^\wedge$ is the product map of $\Gamma^\wedge$. Section 2.3 of reference \cite{saldcon} presents a concrete example of the last statement since for that bicovariant $\ast$--FODC, we have $c^T=0$ and $d\not=0$.

Equation (\ref{something1}) implies that in general, the map $\displaystyle -{1\over 2}c^T$ does not provide  \emph{compatibility} between the quantum Lie algebra structure of $\mathfrak{qg}^\#$ and its differential structure and hence, equation (\ref{falsecur}) cannot be a proper general definition of the curvature of a qpc. This motivates the introduction of the following auxiliary map.

\begin{Definition}
    \label{embdifdef}
    An embedded differential is a linear map 
\begin{equation}
    \label{ec.algo0}
    \Theta:\mathfrak{qg}^\#\longrightarrow \mathfrak{qg}^\#\otimes \mathfrak{qg}^\# 
\end{equation} 
such that
\begin{enumerate}
    \item  $\Theta$ $\in$ $\Mor(\ad,\ad\otimes \ad)$.
    \item The Maurer--Cartan formula holds for $\Theta$:  for every $\theta$ $\in$ $\mathfrak{qg}^\#$, we get 
    \begin{equation*}
        d\theta=m_{\Gamma^\wedge}(\Theta(\theta)).
    \end{equation*}
\end{enumerate}
\end{Definition}

In this sense, an embedded differential can be regarded as a \emph{mechanism to correctly combine} the quantum Lie algebra structure of $\mathfrak{qg}^\#$ with its differential structure. According to \cite{micho1}, in general, an embedded differential can be constructed by choosing a $\ast$--$S$--invariant $\Ad$--invariant complement $L \subset \Ker(\epsilon)$ of $\mathcal{R}$ and taking  
\begin{equation}
    \label{emdiff}
    \Theta=-(\pi\otimes \pi)\circ \Delta\circ \pi^{-1}|_{L}.
\end{equation}
For example, if $\beta=\{\theta_i=\pi(g_i) \}$ is a linear basis of $\mathfrak{qg}^\#$ such that $\theta^\ast_i$ $\in$ $\beta$, then the linear space $L$ generated by $\{ g_i-\epsilon(g_i)\mathbbm{1}\}$ defines an embedded differential by means of equation (\ref{emdiff}). In particular, this implies that embedded differentials are not unique in general.

Now, the motivation for the following definition in Durdevich’s formulation of qpb's should be clear.

\begin{Definition}
    \label{curdef}
   Let $\zeta=(P,B,\Delta_P)$ be a qpb with a differential calculus and fix any such embedded differential $\Theta$. We define
the {\it curvature} of a qpc $\omega$ as the linear map
\begin{equation}
\label{2.f28}
R^{\omega}:=d\omega-\langle \omega,\omega\rangle : \mathfrak{qg}^\#\longrightarrow  \Omega^2(P),
\end{equation}
where $$\langle \omega,\omega\rangle:=m_\Omega\circ(\omega\otimes \omega)\circ\Theta: \mathfrak{qg}^\#\longrightarrow \Omega^2(P).$$ A qpc $\omega$ is called flat if $R^\omega=0$.
\end{Definition}

By the properties of $\Theta$, it can be proven that  $\Im(R^{\omega})\subseteq \Hor^2 P$ and
\begin{equation}
\label{2.f29}
R^{\omega}\,\in\, \Mor^2(\ad,\Delta_\Hor)
\end{equation}
for every qpc $\omega$ (see Theorem 12.11 in reference \cite{stheve}), where \begin{equation}
 \label{2.f24.252}
\begin{aligned}
    \Mor^2(\ad,\Delta_\Hor)=\{\lambda:\mathfrak{qg}^\# \longrightarrow \Hor^2 P\mid \lambda \mbox{ is} &\mbox{ linear and} \\
    & (\lambda\otimes \id_H)\circ \ad=\Delta_\Hor \circ \lambda\}.
\end{aligned}
\end{equation}

According to references \cite{micho2, stheve}, if $\omega$ is multiplicative, then $R^\omega$ does not depend on the choice of $\Theta$.  This is the case, for example, of qpc of the form $\omega^\#_{\mathrm{class}}$ (qpc's that come from the \emph{dualization via the pull--back} of \emph{classical} principal connections).  It is worth mentioning that, for multiplicative qpc’s, one has (\cite{micho2}) $$r^\omega=R^\omega\circ \pi.$$ In other words, only for multiplicative qpc's, the notions of curvature in the Brzezi\'nski--Majid and Durdevich formulations are directly related. However, in general, there exist qpc’s that are not multiplicative (\cite{micho2}); so in general, $r^\omega$ and $R^\omega$ are not--related operators.

It is worth noticing that according to \cite{micho2}, for every qpc $\omega$, the domain of its covariant derivatives $D^\omega$, $\widehat{D}^\omega$ can be naturally extended to $\Omega^\bullet(P)$ and we obtain $$D^\omega\omega=\widehat{D}^\omega \omega=R^\omega,$$ as in the \emph{classical} case (\cite{nodg}). The last equation provides another clear justification for Definition~\ref{curdef}; however, for the purposes of this paper, it is not necessary to pursue this part of Durdevich’s theory further. Interested readers may consult~\cite{micho2} for details.

In general, for any $\ast$--algebra $X$ and linear maps $$T_1,\,\,T_2:\mathfrak{qg}^\# \longrightarrow X,$$ we define 
\begin{equation}
\label{trans}
[ T_1,T_2]:=m_X \circ (T_1\otimes T_2)\circ \mathrm{c}^T:\mathfrak{qg}^\# \longrightarrow X,
\end{equation}
and
\begin{equation}
\label{trans0.1}
\langle T_1,T_2\rangle:=m_X \circ (T_1\otimes T_2)\circ \Theta:\mathfrak{qg}^\# \longrightarrow X,
\end{equation}
for an embedded differential $\Theta$, where  $m_X:X\otimes X\longrightarrow X$ is the product map of $X$.

The $\ast$ operation on $\mathfrak{qg}^\#$  satisfies  $$(\ast\otimes \ast) \circ \ad=\ad \circ \ast$$ and motivates the following operation.
 
\begin{Definition}
    \label{astdef}
    We define the wedge antilinear involution map on $\Mor(\ad,\Delta_{\Omega^\bullet(P)})$ (see Definition \ref{qpc's} and references \cite{micho2,stheve}) by 
    \begin{equation}
    \label{ad.f3}
    \begin{aligned}
        \wedge: \Mor(\ad,\Delta_{\Omega^\bullet(P)})\longrightarrow \Mor(\ad,\Delta_{\Omega^\bullet(P)}),\qquad
        \psi  \longmapsto \widehat{\psi}:=\ast \circ \psi \circ \ast.
    \end{aligned} 
\end{equation}
\end{Definition}

It is worth mentioning that for every $\psi$ $\in$ $\Mor(\ad,\Delta_\Hor)$, we have $\widehat{\psi}$ $\in$ $\Mor(\ad,\Delta_\Hor)$. In  Proposition 12.13 of reference \cite{stheve}, the reader can find the proof of the following statement.
\begin{Proposition}
\label{util}
     For every $\psi$, $\phi$ $\in$ $\Mor(\ad,\Delta_{\Omega^\bullet(P)})$ with $\Im(\psi)\subseteq \Omega^k(P)$, $\Im(\psi)\subseteq \Omega^l(P)$, we have $$[\psi,\phi],\; \langle \psi,\phi\rangle \,\in\, \Mor(\ad,\Delta_{\Omega^\bullet(P)})$$ with $\Im([ \psi,\phi]),\;\Im(\langle \psi,\phi\rangle)\subseteq \Omega^{k+j}(P)$. Furthermore, $$\widehat{\langle\psi,\phi\rangle}:=\ast\circ \langle\psi,\phi\rangle\circ \ast=-(-1)^{kl} \langle\widehat{\phi},\widehat{\psi}\rangle.$$  There are analogous results for $\Mor(\ad,\Delta_{\Hor})$.
\end{Proposition}
It follows from the  last proposition that  $\widehat{\langle\omega,\omega\rangle}=\langle \omega,\omega\rangle$ (because $\widehat{\omega}=\omega$) and hence
\begin{equation}
    \label{2.f29.1}
    \widehat{R^{\omega}}=R^\omega. 
\end{equation}

The proof of the following proposition can be found in Proposition 4.7 of reference \cite{micho2}. 

\begin{Proposition}
    \label{prop0.1}
    Let $\omega$ be a qpc.  By equation (\ref{2.f31}), we can consider 
    \begin{equation*}
        \begin{aligned}
            D^\omega:\Mor(\ad,\Delta_\H)\longrightarrow \Mor(\ad,\Delta_\H),\qquad
            \tau  \longmapsto D^\omega(\tau)
        \end{aligned}
    \end{equation*}
    given by $$D^\omega(\tau)(\theta)=D^\omega(\tau(\theta)),$$ for all $\theta$ $\in$ $\mathfrak{qg}^\#$. Then 
    $$D^\omega(\tau)=d\tau-(-1)^k[\tau,\omega],$$ if $\Im(\tau)$ $\subseteq$ $\Hor^k P$.    
\end{Proposition}

\noindent Notice that the last proposition is exactly the {\it dualization} of the well--known result in differential geometry about the action of the covariant derivative on basic differential forms of type $\ad$ (\cite{nodg}).  

Let $\delta^V$ be a finite--dimensional $H$--corepresentation. By equation (\ref{2.f31}), we can always consider  
\begin{equation}
\label{new1}
        \begin{aligned}
        D^\omega:\Mor(\delta^V,\Delta_\H)\longrightarrow \Mor(\delta^V,\Delta_\H)\qquad
            \tau  \longmapsto D^\omega(\tau),
        \end{aligned}
    \end{equation}
\begin{equation}
\label{new2}
        \begin{aligned}
    \widehat{D}^\omega:\Mor(\delta^V,\Delta_\H)\longrightarrow \Mor(\ad,\Delta_\H)\qquad
            \tau  \longmapsto \widehat{D}^\omega(\tau),
        \end{aligned}
    \end{equation}
given by $$D^\omega(\tau)(v)=D^\omega(\tau(v))\quad \mbox{ and }\quad   \widehat{D}^\omega(\tau)(v)=\widehat{D}^\omega(\tau(v))=(D^\omega(\tau(v)^\ast))^\ast$$
for all $v$ $\in$ $V$. 

In general,  there is no way to define a $\ast$ operation on $V$ such that $(\ast\otimes \ast)\circ \delta^V=\delta^V\circ \ast;$ so  we cannot define the $\wedge$ operation of Definition \ref{astdef} for an arbitrary $\delta^V$. However, for $\tau$ $\in$ $\Mor(\ad,\Delta_\Hor)$, we have
  $$\widehat{D}^\omega(\tau)=(\wedge \circ D^\omega\circ \wedge)(\tau).$$

\begin{Definition}
\label{koperator}
    Let $\delta^V$ be a finite--dimensional $H$--corepresentation and $\lambda$ $\in$ $\overrightarrow{\mathfrak{qpc}(\zeta)}$ (see equation (\ref{2.f24.2})). We define the operator
    \begin{equation*}
        \begin{aligned}
          K^{\lambda}: \Mor(\delta^V,\Delta_\Hor)\longrightarrow  \Mor(\delta^V,\Delta_\Hor),\qquad
          \tau  \longmapsto K^{\lambda}(\tau)
        \end{aligned}
    \end{equation*}
    given by $$K^{\lambda}(\tau)(v)= -(-1)^k (\tau^{(0)}(v))\,\lambda(\pi(\tau^{(1)}(v))) $$ for all $v$ $\in$ $V$, if $\Im(\tau)$ $\subseteq$ $\Hor^k P$ and $\Delta_\H(\tau(v))=\tau^{(0)}(v)\otimes \tau^{(1)}(v)$. In the same way, we define  the dual $K^{\lambda}$ operator  
    \begin{equation*}
        \begin{aligned}
          \widehat{K}^{\lambda}: \Mor(\delta^V,\Delta_\Hor)\longrightarrow  \Mor(\delta^V,\Delta_\Hor),\qquad
          \tau & \longmapsto \widehat{K}^{\lambda}(\tau)
        \end{aligned}
    \end{equation*}
    as
    $$(\widehat{K}^\lambda(\tau))(v)=(K^\lambda(\tau(v)^\ast))^\ast$$ for all $v$ $\in$ $V$.
\end{Definition}

\noindent Since $\mathfrak{qpc}(\zeta)$ is an affine space modeled by $\overrightarrow{\mathfrak{qpc}(\zeta)}$, for every $\omega$ $\in$ $\mathfrak{qpc}(\zeta)$ and every $\lambda$ $\in$ $\overrightarrow{\mathfrak{qpc}(\zeta)}$, we have $\omega+\lambda$ $\in$ $\mathfrak{qpc}(\zeta)$ and by equations (\ref{2.f30}), (\ref{2.f30.1}) we get
\begin{equation}
    \label{2.f30.1.1}
    D^{\omega+\lambda}=D^\omega+K^\lambda \quad \mbox{ and }\quad
 \widehat{D}^{\omega+\lambda}=\widehat{D}^\omega+\widehat{K}^\lambda.
\end{equation}

\begin{Definition}
    \label{Soperator}
    Let $\omega$ be a qpc. We define the operator 
    $$S^{\omega}: \Mor(\ad,\Delta_\H) \longrightarrow \Mor(\ad,\Delta_\H)$$
   given by
    \begin{equation*}
S^{\omega}(\tau):=\langle \omega,\tau\rangle-(-1)^k\langle\tau,\omega\rangle-(-1)^k[\tau,\omega]
\end{equation*}
for every $\tau$ $\in$ $\Mor(\ad,\Delta_\H)$ with $\Im(\tau)$ $\subseteq$ $\Hor^k P$. Similarly, we define the dual $S^\omega$ operator as $$\widehat{S}^\omega:=\wedge\circ S^\omega\circ \wedge.$$ 
\end{Definition}
In reference \cite{micho2}, the operator $S^{\omega}$ is denoted by $q_\omega$ and the reader is encouraged to consult this reference for more details on the operator $S^{\omega}$. For example, $S^{\omega}=0$ when $\omega$ is regular.

\begin{Definition}
    \label{twisted}
    Let $\zeta=(P,B,\Delta_P)$ be a qpb with a differential calculus. We define the twisted covariant derivative of a qpc $\omega$ as the operator $$DS^\omega:=D^\omega-S^\omega: \Mor(\ad,\Delta_\H) \longrightarrow \Mor(\ad,\Delta_\H).$$ Explicitly, by Proposition \ref{prop0.1}, for every $\tau$ $\in$ $\Mor(\ad,\Delta_\H)$ with $\Im(\tau)$ $\subseteq$ $\Hor^k P$, we have 
    \begin{equation}
        \label{twcoder}
        DS^\omega(\tau)=d\tau-\langle \omega,\tau\rangle+(-1)^k\langle\tau,\omega\rangle.
    \end{equation}
    In the same way, we define the dual twisted covariant derivative as the operator $$\widehat{DS^\omega}=\wedge \circ DS^\omega\circ \wedge.$$
\end{Definition}

In accordance with the motivation behind of Definition \ref{embdifdef} and Proposition \ref{prop0.1}, the operators $DS^\omega$, $\widehat{DS^\omega}$ have to be interpreted as the {\it correct covariant derivative on} $\Mor(\ad,\Delta_\Hor)$, in the sense that both operators covariantly differentiates elements of $\Mor(\ad,\Delta_\Hor)$, correctly combining the quantum Lie algebra structure of $\mathfrak{qg}^\#$ with its differential structure.

Let us consider the space 
\begin{equation}
    \label{realtensor}
    \Mor(\ad,\Delta_\Hor)^{\dagger}:=\{\tau\,\in\, \Mor(\ad,\Delta_\Hor) \mid \widehat{\tau}=\tau \}.
\end{equation}
Notice that (see equations (\ref{2.f24.2}), (\ref{2.f29.1})) $$\overrightarrow{\mathfrak{qpc}(\zeta)}\subseteq  \Mor(\ad,\Delta_\Hor)^{\dagger} \qquad \mbox{and}\qquad R^\omega\,\in\, \Mor(\ad,\Delta_\Hor)^{\dagger}. $$
By Proposition \ref{util} and equation (\ref{twcoder}), we obtain
    \begin{equation}
        \label{twcoder1}
        \widehat{DS^\omega}(\tau)=DS^\omega(\tau)
    \end{equation}
for all $\tau$ $\in$ $\Mor(\ad,\Delta_\Hor)^{\dagger}$. This implies that 
    \begin{equation}
        \label{twcoder2}
DS^\omega|_{\Mor(\ad,\Delta_\Hor)^{\dagger}}=\widehat{DS^\omega}|_{\Mor(\ad,\Delta_\Hor)^{\dagger}}:\Mor(\ad,\Delta_\Hor)^{\dagger}\longrightarrow \Mor(\ad,\Delta_\Hor)^{\dagger}.
    \end{equation}

The twisted covariant derivative will be essential for the theory we aim to develop. As evidence for this, we have the {\it non--commutative geometrical} version of the second Bianchi identity: 
\begin{equation}
\label{a.5}
\widehat{DS}^{\omega}(R^{\omega})=DS^{\omega}(R^{\omega})=\langle \omega,\langle \omega,\omega\rangle\rangle-\langle\langle \omega,\omega\rangle,\omega\rangle.
\end{equation}
The proof of the last equation is a straightforward calculation that holds for every qpc; there is no need to assume any additional condition on $\omega$, as the reader can verify in Proposition 4.9 of reference \cite{micho2}.  When $\omega$ is regular (\cite{micho2}) $$D^\omega=\widehat{D}^\omega\qquad \mbox{ and }\qquad S^{\omega}=\widehat{S}^\omega=0;$$ and if $\omega$  is multiplicative (\cite{micho2}) $$\langle \omega,\langle \omega,\omega\rangle\rangle-\langle\langle \omega,\omega\rangle,\omega\rangle=0.$$ So, if $\omega$ is both regular and multiplicative (for example, for a qpc of the form $\omega^\#_{\mathrm{class}}$), we get the ({\it dualization via the pull--back} of the) second Bianchi identity in differential geometry: $$\widehat{D}^\omega(R^\omega)=D^{\omega}R^{\omega}=0.$$

To conclude this subsection, we make the following assumption. 

\begin{Remark}
\label{rema}
From this point onward until the end of the paper, we shall restrict our attention exclusively to qpb's for which the quantum base space $B$ is stable under holomorphic calculus. According to Appendix B of reference \cite{micho3}, in this case, for every $\delta^V$ $\in$ $\T$ there exists a set  $$\{T^\l_k \}^{d_{V}}_{k=1} \subseteq \Mor(\delta^V,\Delta_P)$$ for some $d_{V}$ $\in$ $\N$ such that
\begin{equation}
    \label{generators}
\sum^{d_{V}}_{k=1}x^{V\,\ast}_{ki}x^{V}_{kj}=\delta_{ij}\mathbbm{1},
\end{equation}
with $x^{V}_{ki}:=T^\l_k(e_i)$. Here, $\T$  is a complete set of mutually non--equivalent irreducible (necessarily finite--dimensional) $H$--corepresentations with $\delta^\C_\triv$ $\in$ $\T$ (the trivial corepresentation on $\C$), and $\{e_i\}^{n_{V}}_{i=1}$ is the orthonormal basis of $V$ given in Theorem \ref{rep}.
\end{Remark}

The introduction of the maps $\{T^\l_k\}$ provides a valuable technical tool for carrying out explicit calculations, as the reader can see, for example, in Appendix~A and Proposition \ref{gaugeym}; and as we have commented, these maps exist if $B$ is stable under holomorphic calculus \cite{micho3}.

It is worth mentioning that this assumption on $B$ is easily satisfied and constitutes a common assumption in Durdevich's formulation of qpb's (see, for example, references \cite{micho3,micho7}) and in non--commutative geometry. For instance, the Yang--Mills theory formulated by A. Connes in reference \cite{con} holds solely for quantum spaces $B$ that are stable under holomorphic calculus (among other conditions on $B$). In Proposition 2.9 of reference \cite{sald2} we determine the explicit form of the operators $\{T^\l_k\}$ in differential geometry for a \emph{classical} principal bundle. Moreover, in Section 5 of reference \cite{sald2} we determine the explicit form of these operators for trivial qpb's and homogeneous qpb's.

\subsection{Associated Quantum Vector Bundles}

In this section, we present the \emph{non commutative geometrical} counterpart of the theory of associated vector bundles within Durdevich’s formulation of quantum principal bundles. For further details, we refer the reader to reference \cite{sald2}.

Let $\pi:GM\longrightarrow M$ be a  principal $G$--bundle in differential geometry and consider a finite--dimensional unitary representation $\alpha^V:G\longrightarrow GL(V)$. The associated vector bundle with respect to $\alpha^V$ is defined as $$\pi_{\alpha^V}:E^V\longrightarrow M, \qquad [x,v]\longmapsto \pi(x),$$ where $$E^V:=GM\times_G V:=(GM\times V)/G, $$ where the $G$--action on $GM\times V$ is given by $$(x,v,A)\longmapsto (x\cdot A,\alpha^V(A^{-1})(v)),$$ for all $x$ $\in$ $GM$, $v$ $\in$ $V$ and $A$ $\in$ $G$. In light of the Serre--Swan theorem, the associated vector bundle is equivalent to the finitely generated projective $C^\infty_\C(M)$--bimodule of its global smooth sections $$\Gamma(E^V).$$ 

It is well--known that there exists a $C^\infty_\C(M)$--bimodule isomorphism (\cite{nodg}) 
\begin{equation}
    \label{dif1}
    GP^{-1}_0:\Gamma(E^V)\longrightarrow C^\infty_\C(GM,V)^G,
\end{equation}
where $$C^\infty_\C(GM,V)^G $$ is the space of $G$--equivariant smooth maps between $GM$ and $V$; so the associated vector bundle is also equivalent to $C^\infty_\C(GM,V)^G $. Identifying the dual space $V^\#$ of $V$ with $V$,  the pull--back induces a $C^\infty_\C(M)$--bimodule isomorphism 
\begin{equation}
    \label{dif2}
    \#: C^\infty_\C(GM,V)^G\longrightarrow \Mor(\delta^V,\Delta_P),
\end{equation}
where $\delta^V$ is the corepresentation given by the pull--back of $\alpha^V$, and $\Delta_P$ is the pull--back of the  complex--extension of the canonical $G$--action on $GM$. Hence, the associated vector bundle is also equivalent to $\Mor(\delta^V,\Delta_P)$.

On the other hand, by the Serre--Swan theorem, in non--commutative geometry, quantum vector bundles are defined as left or right finitely generated projective $B$--modules, for a quantum space $B$ \cite{con, dv}. In addition, it can be proven that for a given quantum principal $\G$--bundle over $B$ and a finite--dimensional $H$--corepresentation $\delta^V$, the space $\Mor(\delta^V,\Delta_P)$ is always a left/right finitely generated projective $B$--module, where the left/right $B$--module structure is given by the left/right multiplication by elements of $B$ (\cite{sald2,micho2}). Motivated by all these facts, in  Durdevich's formulation of qpb's we have the following definition.
\begin{Definition}
\label{qvb's}
    Let $\zeta=(P,B,\Delta_P)$ be a quantum principal $\G$--bundle and let $\delta^V$ $\in$ $\Obj(\Rep_{H})$. We define the {\it associated left quantum vector bundle} (abbreviated ``associated left qvb") as the  left $B$--module $$E^V_\l:=\Mor(\delta^V,\Delta_P),$$ and we define the {\it associated right quantum vector bundle} (abbreviated ``associated right qvb") as the right $B$--module $$E^V_\r:=\Mor(\delta^V,\Delta_P).$$
\end{Definition}
From now on, we will write $\Mor(\delta^V, \Delta_P)$ to refer to the $B$--bimodule structure or the $\C$--vector space structure of this set (while the notation $E^V_\l/E^V_\r$ is to refer to the left/right $B$--module structure of this set).

In the Brzezi\'nski--Majid formulation of qpb’s, associated quantum vector bundles are defined using the cotensor product (\cite{libro}) rather than intertwiner maps, as in Durdevich’s formulation. However, according to Section~6 of reference \cite{br2}, the two definitions are isomorphic.

Let $\delta^V$ be a finite--dimensional $H$--corepresentation. Then, it is well--known that there exist $\delta^{V_1}$,..., $\delta^{V_m}$ $\in$ $\T$ such that (\cite{woro1}) $$\delta^V\cong \bigoplus^m_{j=1} \delta^{V_j}.$$ According to Remark \ref{rema}, for each $\delta^{V_j}$ there exist the operators $\{T^\l_k\}$ and hence, we can consider the union of all such operators $\{T^\l_k\}$. In this way, in accordance with  Section 3 of reference \cite{sald2}, the following isomorphisms holds:
\begin{equation}
    \label{new.5}
    E^V_\l\cong B^{d}\cdot \varrho^V(\mathbbm{1})\qquad \mbox{ with }\qquad \varrho^V(\mathbbm{1})=(\sum_i x^{V}_{ki}\,\,x^{V\,\ast}_{li})_{kl}\;\in\; M_{d}(B)
\end{equation}
as left $B$--modules for some $d$ $\in$ $\N$, where $x^{V}_{ki}:=T^\l_k(e_i)$ (see Remark \ref{rema}) and $M_{d}(B)$ denotes the space of $d\times d$ matrices with entries in $B$. In particular,  for every $T$ $\in$ $E^V_\l$, we get (\cite{sald2})
\begin{equation}
    \label{leftB}
    T=\sum_k b^{_{T}}_k\,T^\l_k\quad \mbox{ with } \quad b^{_{T}}_k=\sum_{i}T(e_i)\,x^{V\,\ast}_{ki}\;\in\;B,
\end{equation}
Notice that the superscript $\l$ is to indicate that $\{T^\l_k\}$ are left $B$--generators of $E^V_\l$.

Similarly, we obtain (see reference \cite{sald2})
\begin{equation}
    \label{new.6}
    E^V_\r\cong \varrho^{\overline{V}}(\mathbbm{1}) \cdot B^{s} 
\end{equation}
as right $B$--modules for some $s$ $\in$ $\N$, where $\varrho^{\overline{V}}(\mathbbm{1})$ is the corresponding matrix of equation (\ref{leftB}) for the complex conjugate $H$--corepresentation $\delta^{\overline{V}}$ of $\delta^V$.

If  $T$ $\in$ $\Mor(\delta^V,\Delta_P)$, then $T^\ast$ $\in$ $\Mor(\delta^{\overline{V}},\Delta_P)$ and therefore $$T^\ast=\displaystyle \sum_k b^{T^\ast}_kT^\l_k.$$ Here, the maps $\{T^\l_k\}$ are the corresponding left $B$--generators of $\Mor(\delta^{\overline{V}},\Delta_\Hor)$ and the linear map $T^\ast:\overline{V}\longrightarrow P
$ is given by $T^\ast(\overline{v}):=T(v)^\ast$ for all $\overline{v}$ $\in$ $\overline{V}$. Hence (\cite{sald2})
\begin{equation}
\label{3.f5.2}
T=\displaystyle \sum_k T^\r_k (\mu^{T^\ast}_k)^\ast
\end{equation} 
with $T^\r_k:=T^{\l\,\ast}_k$ $\in$ $\Mor(\delta^V,\Delta_P)$. The superscript $\r$ is to indicate that $\{T^\r_k\}$  right $B$--generators of  $E^V_\r$. 

As we have commented before, the operators $\{T^\l_k\}$ exist in differential geometry, and the reader can check their explicit form in Proposition 2.9 of reference \cite{sald2}. In addition, it is easy to verify that equations (\ref{leftB}), (\ref{3.f5.2}) are also satisfied in the \emph{classical} case.

Let $\pi:GM\longrightarrow M$ be a principal $G$--bundle in differential geometry and consider the associated vector bundle $\pi_{\alpha^V}:E^V\longrightarrow M$ for a finite--dimensional unitary representation $\alpha^V$. Then, according to \cite{nodg}, the \emph{Gauge Principle} holds: there exists an  isomorphism $GP^{-1}$ between $E^V$--valued differential forms of $M$ and basic forms of type $\alpha^V$ of $GM$. This isomorphism in degree $k$
\begin{equation}
    \label{dif3}
GP^{-1}_k:\Omega^k_\C(M)\otimes_{C^\infty_\C(M)}\Gamma(E^V)\longrightarrow \Omega^k_{\C}(GM,V)^G 
\end{equation}
is given by $$ GP^{-1}_k(\mu\otimes s):=\mu\,GP^{-1}_0(s),$$  where $$(\mu\,GP^{-1}_0(s))(X_1,...,X_k):=(\pi^\#\mu)_x(X_1,...,X_k)\,GP^{-1}_0(s)(x) $$ for all $X_1,...,X_k$ $\in$ $T_x(GM)$, with $\pi^\#\mu$ the pull--back  of $\mu$ by $\pi$. Moreover, equation (\ref{dif1}) induces the isomorphism
\begin{equation}
    \label{dif4}
GP^{'-1}:\Omega^\bullet_\C(M)\otimes_{C^\infty_\C(M)}C^\infty_\C(GM,V)^G\longrightarrow \Omega^\bullet_{\C}(GM,V)^G 
\end{equation}
given by $$GP'(\mu\otimes f)=\mu\,f.$$ Extending equation (\ref{dif2}) to differential forms, we have an isomorphism
\begin{equation}
    \label{dif5}
    \#: \Omega^\bullet_{\C}(GM,V)^G\longrightarrow \Mor(\delta^V,\Delta_\Hor),
\end{equation}
where $\Delta_\Hor$ is the pull--back of the canonical $G$--action on the complexification of the horizontal bundle of $GM$; and we obtain the isomorphism
\begin{equation}
    \label{dif6}
    \Upsilon^{-1}_V: \Omega^\bullet_\C(M)\otimes_{C^\infty(M)} \Mor(\delta^V,\Delta_P)\longrightarrow \Mor(\delta^V,\Delta_\Hor)
\end{equation}
given by $$\Upsilon^{-1}_V(\mu\otimes T)=\mu\,T.$$

Moreover, any principal connection $\omega_\mathrm{class}$ of    $\pi:GM\longrightarrow M$ induces a \emph{canonical} linear connection on $E^V$ by means of (\cite{nodg})
\begin{equation}
    \label{dif7}
    \nabla^{\omega_\mathrm{class}}_V:= GP\circ D^{\omega_\mathrm{class}} \circ GP^{-1}: \Gamma(E^V)\longrightarrow \Omega^1_\C(M)\otimes_{C^\infty_\C(M)}\Gamma(E^V)
\end{equation}
and the last equation extends to the exterior derivative of $\nabla^{\omega_\mathrm{class}}_V$:
\begin{equation}
    \label{dif8}
    d^{\nabla^{\omega_\mathrm{class}}_V}:= GP\circ D^{\omega_\mathrm{class}} \circ GP^{-1}: \Omega^\bullet_\C(M)\otimes_{C^\infty_\C(M)}\Gamma(E^V)\longrightarrow \Omega^\bullet_\C(M)\otimes_{C^\infty_\C(M)}\Gamma(E^V),
\end{equation}
where $D^{\omega_\mathrm{class}}$ is the covariant derivative of $\omega_\mathrm{class}$ (see equation (\ref{dif0})). In physics, the map  $\nabla^{\omega_\mathrm{class}}_{V}$ receives the name of gauge covariant derivative.

By using  equations (\ref{dif5}), (\ref{dif6}), we can induce operators equivalent on $\Mor(\delta^V,\Delta_P)$ and $\Mor(\delta^V,\Delta_\Hor)$:
\begin{equation}
    \label{dif9}
    \nabla^{\omega^\#_\mathrm{class}}_V:= \Upsilon_V\circ \#\circ  D^{\omega_\mathrm{class}} \circ \#^{-1}, \qquad d^{\nabla^{\omega^\#_\mathrm{class}}_V}:=\Upsilon_V\circ \#\circ  D^{\omega_\mathrm{class}} \circ \#^{-1}\circ \Upsilon^{-1}_V.
\end{equation}

Equation (\ref{dif6}) motivates the following result in Durdevich's formulation of qpb. The reader can find a proof of the next proposition in Section 3 of reference \cite{sald2}

\begin{Proposition}
\label{qgaugeprinciple}
    Let $\zeta=(P,B,\Delta_P)$ be a quantum $\G$--bundle  with a differential calculus and consider $\delta^V$ $\in$ $\Obj(\Rep_H)$. Then the map $$\Upsilon_V: \Mor(\delta^V,\Delta_\Hor)\longrightarrow \Omega^\bullet(B)\otimes_B E^V_\l $$ given by
\begin{equation}
\label{3.f7}
\Upsilon_{V}(\tau)=\sum_{k}\mu^\tau_k\otimes_B T^\l_k \qquad \mbox{ with }\qquad \mu^{\tau}_k=\sum_{i}\tau(e_i)\,x^{V\,\ast}_{ki}\;\in\;\Omega^\bullet(B)
\end{equation}
is the inverse of the left $B$--module morphism  
\begin{equation}
\label{3.f5.1.2.3}
\Upsilon^{-1}_{V}:\Omega^\bullet(B)\otimes_{B}E^V_\l\longrightarrow\Mor(\delta^V,\Delta_\Hor) \qquad \mbox{ given by }\qquad \Upsilon^{-1}_{V}(\mu\otimes_{B} T)=\mu\, T.
\end{equation}

Similarly, the map 
\begin{equation}
\label{3.f7.5}
\widehat{\Upsilon}_{V}: \Mor(\delta^V,\Delta_\Hor)\longrightarrow E^V_\r\otimes_B \Omega^\bullet(B),\qquad  \tau\longmapsto\sum_k T^\r_k\otimes_B (\mu^{\tau\ast}_k)^\ast  
\end{equation}
is the inverse of the right  $B$--module morphism 
\begin{equation}
    \label{3.f5.1.2.3.1}
    \widehat{\Upsilon}^{-1}_{V}:  E^V_\r\otimes_B \Omega^\bullet(B)\longrightarrow \Mor(\delta^V,\Delta_\Hor)\qquad \mbox{ given by} \qquad \widehat{\Upsilon}^{-1}_{V}(T\otimes_B\mu)=T\,\mu.
\end{equation}
\end{Proposition}

\begin{Remark}
\label{remarkinverse}
Apparently, $\Upsilon_{V}$ and $\widehat{\Upsilon}_{V}$ depend on the set of generators $\{ T^\l_k\}$, $\{ T^\r_k\}$ of $\Mor(\delta^{V},\Delta_P)$, respectively. However,  the uniqueness of the inverse map ensures that both operators are independent of the choice of these sets.
\end{Remark}

\begin{Remark}
    \label{interpretations}
    In accordance with the classical case, elements of $\Omega^\bullet(B)\otimes_{B}E^V_\l$ can be interpreted as $E^V_\l$--valued differential forms of $B$; while elements of $E^V_\r\otimes_{B}\Omega^\bullet(B)$ can be interpreted as $E^V_\r$--valued differential forms of $B$. Finally, elements of $\Mor(\delta^V,\Delta_\Hor)$ can be interpreted as quantum basic differential forms of type $\delta^V$.

    In the classical case, the curvature of a principal connection is a differential $2$--form of type $\ad_\mathrm{class}$; so from a physical interpretation, the curvature is a tensor field.  By equation (\ref{2.f29}), we get that $R^\omega$  is a basic  quantum differential $2$--form of type $\ad$ for every qpc $\omega$ and thus, in terms of a  physical interpretation, we can consider $R^\omega$ as a  non--commutative geometrical tensor field.
\end{Remark}

Under these interpretations and by the first part of equation (\ref{dif9}) (taking into account that in non--commutative geometry, the map $\#$ is not necessary), we have the following definition in Durdevich's formulation of qpb's \cite{sald2}.

\begin{Definition}
    \label{inducedqvb}
   Let $\zeta=(P,B,\Delta_P)$ be a quantum $\G$--bundle  with a differential calculus. Let $\omega$ be a qpc and consider $\delta^V$ $\in$ $\Obj(\Rep_{H})$ and equation (\ref{new1}). Then,  the linear map
\begin{equation}
\label{3.f8}
\nabla^{\omega}_{V}:E^V_\l \longrightarrow \Omega^{1}(B)\otimes_B E^V_\l,\qquad
T  \longmapsto \Upsilon_{V}(D^{\omega}(T)),
\end{equation}
is a {\it quantum linear connection} on $E^V_\l$, in the sense of reference \cite{dv}, i.e., $\nabla^{\omega}_{V}$ satisfies the left Leibniz rule. Similarly, considering equation (\ref{new2}),  the linear map
\begin{equation}
\label{3.f8.1}
\widehat{\nabla}^{\omega}_{V}:E^V_\r \longrightarrow E^V_\r\otimes_{B}\Omega^{1}(B),\qquad
T  \longmapsto \widehat{\Upsilon}_{V}(\widehat{D}^{\omega}(T)),
\end{equation}
is a quantum linear connection on $E^V_\r$, i.e., $\widehat{\nabla}^{\omega}_{V}$ satisfies the right Leibniz rule. The maps  $\nabla^{\omega}_{V}$ and $\widehat{\nabla}^{\omega}_{V}$ receive the name of  induced quantum linear connections of $\omega$ (abbreviated ``induced qlc's"). 
\end{Definition}

It worth remarking that our formulation holds for every qpc $\omega$: it is not necessary to impose any condition on $\omega$ to define the induced qlc's.

\begin{Remark}
    \label{remacon}
    By defining $\sigma_{V}:=\widehat{\Upsilon}_{V}\circ \Upsilon^{-1}_{V}$, we obtain that
\begin{equation}
\label{3.f10}
\sigma_{V}\circ \nabla^{\omega}_{V}=\widehat{\nabla}^{\omega}_{V}
\end{equation} 
when $\omega$ is regular \cite{sald1}. This is the main reason for not using the $B$--bimodule structure of $\Mor(\delta^V,\Delta_P)$, choosing instead to handle the left and right structures separately:  only for regular qpc's the induced qlc's can be combined into a $B$--bimodule quantum linear connection on $\Mor(\delta^V,\Delta_P)$.
\end{Remark}

By definition and equation (\ref{2.f30.1.1}), we obtain
\begin{equation}
    \label{nedded1}
\nabla^{\omega+\lambda}_V(T)=\nabla^\omega_V(T)+\Upsilon_V(K^\lambda(T)),\qquad \widehat{\nabla}^{\omega+\lambda}_V(T)=\widehat{\nabla}^\omega_V(T)+\widehat{\Upsilon}_V(\widehat{K}^\lambda(T))
\end{equation}
for all $\lambda$ $\in$ $\overrightarrow{\mathfrak{qpc}(\zeta)}$ (see equation (\ref{2.f24.2})).

According to Section 3 of reference \cite{sald2}, the exterior covariant derivative  $$d^{\nabla^{\omega}_{V}}: \Omega^\bullet(B)\otimes_B E^V_\l\longrightarrow \Omega^\bullet(B)\otimes_B E^V_\l \quad \mbox{such that}\quad  d^{\nabla^{\omega}_{V}}(\mu\otimes_B T)= \mu\otimes_B T +(-1)^k\mu \nabla^{\omega}_{V}(T)$$  for all $\mu$ $\in$ $\Omega^k(B)$, satisfies  
\begin{equation}
\label{3.f10.3}
d^{\nabla^{\omega}_{V}}= \Upsilon_{V}\circ D^{\omega}\circ \Upsilon^{-1}_{V}.
\end{equation}
Similarly,  the exterior covariant derivative  $$d^{\widehat{\nabla}^{\omega}_{V}}: E^V_\r \otimes_B \Omega^\bullet(B)\longrightarrow E^V_\r \otimes_B \Omega^\bullet(B) \quad \mbox{such that}\quad d^{\widehat{\nabla}^{\omega}_{V}}(T\otimes_B \mu)= \widehat{\nabla}^{\omega}_{V}(T) \mu +T \otimes_B d\mu$$ satisfies
\begin{equation}
\label{3.f10.6}
d^{\widehat{\nabla}^{\omega}_{V}}= \widehat{\Upsilon}_{V}\circ \widehat{D}^{\omega}\circ \widehat{\Upsilon}^{-1}_{V}.
\end{equation}
Equations (\ref{3.f10.3}), (\ref{3.f10.6}) are the \emph{non--commutative geometrical} generalization of the second part of equation (\ref{dif9}).

Let $\pi:GM\longrightarrow M$ be a principal $G$--bundle and consider $\alpha^V:G\longrightarrow GL(V)$ a unitary finite--dimensional representation. Then, there exists a \emph{canonical} Hermitian structure
\begin{equation}
    \label{dif10}
    (-,-):\Gamma(E^{V})\times \Gamma(E^{V})\longrightarrow C^\infty_\C(M)
\end{equation}
on the associated vector bundle. It is worth mentioning that for every principal connection $\omega_\mathrm{class}$, the induced linear connection (see equation (\ref{dif7})) $$\nabla^{\omega_\mathrm{class}}_V$$  is Hermitian with respect to $(-,-)$ \cite{gtvp}. This motivates the following construction and Theorem \ref{fgs} in Durdevich's formulation of qpb's.

Let $\zeta=(P,B,\Delta_P)$ be a qpb with a differential calculus. Then, in light of Section 3.2 of reference \cite{sald2}, for every $\delta^V$ $\in$ $\Obj(\Rep_{H})$, the canonical 
non--degenerated Hermitian structure on the free left $B$--module $B^{d}$ (see equation (\ref{canoleft2}) in Appendix B) induces a non--degenerated Hermitian structure on $E^V_\l$
\begin{equation}
    \label{canoleft0}
    (-,-)_\l: E^V_\l\times E^V_\l\longrightarrow B,\qquad (T_1,T_2)\longmapsto \sum_k T_1(e_k)\,T_2(e_k)^\ast 
\end{equation}
that is actually a $B$--valued inner product for $E^V_\l$, where $\{e_k \}$ is any orthonormal linear basis of $V$ with respect of the inner product that makes $\delta^V$ unitary. Moreover, $(-,-)_\l$ can be extended to left qvb--valued differential forms of $B$ 
\begin{equation}
    \label{canoleft1}
    (-,-)^\bullet_\l: (\Omega^\bullet(B)\otimes_B E^V_\l)\times (\Omega^\bullet(B)\otimes_B E^V_\l)\longrightarrow \Omega^\bullet(B)
\end{equation}
by means of $$ (\mu_1\otimes_B T_1,\mu_2\otimes_B T_2)^\bullet_\l=\mu_1 \,(T_1, T_2)_\l\,\mu^\ast_2.$$

Similarly, we have a non--degenerated Hermitian structure on $E^V_\r$
\begin{equation}
    \label{canoright0}
    (-,-)_\r: E^V_\r\times E^V_\r\longrightarrow B,\qquad (T_1,T_2)\longmapsto \sum_k T_1(e_k)^\ast\,T_2(e_k) 
\end{equation}
that is actually a $B$--valued inner product for $E^V_\r$. Furthermore, $(-,-)_\r$ can be extended to right qvb--valued differential forms of $B$ 
\begin{equation}
    \label{canoright1}
    (-,-)^\bullet_\r: (E^V_\r \otimes_B\Omega^\bullet(B) )\times (E^V_\r\otimes_B\Omega^\bullet(B))\longrightarrow \Omega^\bullet(B)
\end{equation}
by means of $$ ( T_1\otimes_B \mu_1,T_2\otimes_B \mu_2 )^\bullet_\r=\mu^\ast_1 \,(T_1, T_2)_\r\,\mu_2.$$

The proof of the following theorem can be found in Theorem
3.10 of reference \cite{sald2}.

\begin{Theorem}
\label{fgs}
Let $\zeta$ be a qpb with a qpc $\omega$ and consider $\delta^V$ $\in$ $\Obj(\Rep_{H})$. Then $\nabla^\omega_V$ and $\widehat{\nabla}^\omega_V$ are Hermitian, i.e., for every $T_1$, $T_2$ $\in$ $E^V_\l$ we have
$$( \nabla^\omega_V T_1,T_2 )^\bullet_\l+ (T_1,\nabla^\omega_V T_2)^\bullet_\l=d(T_1,T_2)_\l$$ and for every $T_1$, $T_2$ $\in$ $E^V_\r$ we get $$ (\widehat{\nabla}^\omega_V T_1,T_2)^\bullet_\r+( T_1,\widehat{\nabla}^\omega_V T_2)^\bullet_\r=d(T_1,T_2)_\r.$$
\end{Theorem}

It is worth mentioning that in this paper, we have defined a qpc as a linear map $$\omega:\mathfrak{qg}^\#\longrightarrow \Omega^1(P)$$ that satisfies 
$\Delta_{\Omega^\bullet(P)}(\omega(\theta))=(\omega\otimes \id_H)\ad(\theta)+\mathbbm{1}\otimes\theta$ and $\omega(\theta^\ast)=\omega(\theta)^\ast$; while in reference \cite{sald2}, a qpc is a linear map $$\omega:\mathfrak{qg}^\#\longrightarrow \Omega^1(P)$$ that only satisfies $\Delta_{\Omega^\bullet(P)}(\omega(\theta))=(\omega\otimes \id_H)\ad(\theta)+\mathbbm{1}\otimes\theta$, i.e., the condition $\omega(\theta^\ast)=\omega(\theta)^\ast$ is not necessary. In reference \cite{sald2}, when a qpc fulfills $\omega(\theta^\ast)=\omega(\theta)^\ast$ is called {\it real}; so the last theorem in reference \cite{sald2} is written in terms of real qpc's. 

For this paper, we have decided to add the condition $\omega(\theta^\ast)=\omega(\theta)^\ast$ in the definition of qpc's because
\begin{enumerate}
    \item The {\it standard} definition of qpc's in Durdevich's formulation incorporates this condition, as the reader can verify in references \cite{micho2,micho1,micho3,stheve}.
    \item With this condition, Theorem \ref{fgs} holds for every qpc.
    \item In the Section 4, we will relate qpc's to gauge boson fields, as in differential geometry. One of the {\it physical requirements} for gauge boson fields is that they must be real maps; so, the condition $\omega(\theta^\ast)=\omega(\theta)^\ast$ ensures this.
\end{enumerate}

\section{Formal Adjointability}

The previous section was devoted to presenting a summary of the motivations and essential aspects of Durdevich's formulation of qpb’s needed for the purposes of this paper. In this sense, the paper is reasonably self--contained. 

In this section, the novel part of the paper begins. We start by examining the adjointability of some of the operators introduced in Section 2.

\subsection{Formal Adjointability of the Differential}

Following the {\it classical} case,  we are going to use the Hodge star operator to define the formal adjoint operator of the differential.

\begin{Definition}
\label{a.2.1}
Let $(B,\cdot,\mathbbm{1},\ast)$ be a quantum space and let $(\Omega^\bullet(B),d,\ast)$ be a graded differential $\ast$--algebra generated by its $0$--degree  elements $\Omega^0(B)=B$ (quantum differential forms on $B$). We say that:
\begin{enumerate}
\item $B$ is orientable if there exists $n\in\N$ such that $\Omega^k(B)=0$ for all $k>n$ and
\[
\Omega^n(B)=B\,\dvol,
\]
where $0\neq\dvol\in\Omega^n(B)$ satisfies $b\,\dvol=0$ if and only if $b=0$. The element $\dvol$ is called a quantum $n$--volume form, and once such an element is fixed, we say that $B$ is oriented.

\item If $B$ is oriented, a left quantum Riemannian metric on $B$ is a family of $B$--valued inner products (antilinear in the second variable)
\[
\{\langle-,-\rangle^k_\l:\Omega^k(B)\times\Omega^k(B)\longrightarrow B\}
\]
such that for $k=0$,
\[
\begin{aligned}
\langle-,-\rangle^0_\l:\; B\times B\longrightarrow B,\qquad
(b_1,b_2)\longmapsto b_1\,b_2^\ast,
\end{aligned}
\]
and for $k=n$,
\[
\begin{aligned}
\langle-,-\rangle^n_\l:\Omega^n(B)\times \Omega^n(B)\longrightarrow B,\qquad
(b_1\,\dvol,\; b_2\,\dvol)\longmapsto b_1\,b_2^\ast,
\end{aligned}
\]
and such that
\[
\langle \mu_1\,b,\mu_2\rangle^k_\l=\langle \mu_1,\mu_2\, b^\ast\rangle^k_\l
\]
for all $\mu_1,\mu_2\in\Omega^k(B)$, $b\in B$, and $1\leq k<n$.
For our purposes, given a left quantum Riemannian metric on $B$, we define a right quantum Riemannian metric on $B$ by
\[
\langle \mu_1,\mu_2\rangle^k_\r:=\langle \mu_1^\ast,\mu_2^\ast\rangle^k_\l
\]
for all $k$. Notice that $\langle-,-\rangle^k_\r$ is now antilinear in the first variable.

\item If $B$ is oriented and $s$ is a faithful state on $B$, we define a quantum integral on $B$ by
\[
\begin{aligned}
\int_{B}: \Omega^{n}(B)\longrightarrow \C,\qquad
b\,\dvol\longmapsto s(b).
\end{aligned}
\]
We interpret this quantum integral as satisfying Stokes’ theorem by defining
\[
\begin{aligned}
\int_{\partial B}: \Omega^{n-1}(B)\longrightarrow \C,\qquad
\mu\longmapsto \int_B d\mu.
\end{aligned}
\]
If $\displaystyle\Im(d)\subseteq \Ker\left(\int_B\right)$, we say that $(B,\cdot,\mathbbm{1},\ast)$ is a quantum space without boundary (with respect to the given quantum integral).
\end{enumerate}
\end{Definition}

\noindent
Better yet, it is easy to see that
\begin{equation}
\label{a.f2.0}
\dvol\,b=E(b)\,\dvol
\end{equation}
for all $b\in B$, where $E$ is a unital multiplicative linear map and the composition $E\circ\ast$ is an involution.  
Notice that if the quantum integral is a closed graded trace, then it is possible to establish a link with cyclic cohomology (\cite{con}).  
Furthermore, by postulating orthogonality between quantum differential forms of different degrees, we can induce a quantum Riemannian structure on the entire graded space $\Omega^\bullet(B)$; hence, we will no longer use degree superscripts.

Given a quantum space $(B,\cdot,\mathbbm{1},\ast)$ equipped with a quantum integral, the maps
\begin{equation}
\label{a.f2.1}
\begin{aligned}
\langle-|- \rangle_\l := \int_B \langle-,-\rangle_\l\,\dvol,\qquad
\langle-|- \rangle_\r := \int_B \langle-,-\rangle_\r\,\dvol
\end{aligned}
\end{equation}
define inner products on $B$, called the \emph{left and right quantum Hodge inner products}, respectively.  
Of course, if the state $s$ is multiplicative, we obtain pre--$C^\ast$--algebras. However, these induced structures are not necessarily equal to the original one on $B$.

\begin{Remark}
    \label{remariemann}
    Following point~(2) of Definition~\ref{a.2.1} and using non--degenerate $B$--valued sesquilinear maps, it should be clear how to define left/right quantum pseudo--Riemannian metrics on $B$. In this paper, we focus on developing the theory for left/right quantum Riemannian metrics; however, mutatis mutandis, the theory developed can be generalized to the quantum pseudo--Riemannian setting.
\end{Remark}

\begin{Definition}
\label{a.2.4}
Assume that $B$ is endowed with a fixed orientation $\dvol$ and a quantum integral such that $B$ has no boundary. Furthermore, we assume that for a given $\mu\in\Omega^{n-k}(B)$, the left $B$--module map
\[
\begin{aligned}
F_\mu:\Omega^k(B)\longrightarrow B,\qquad
\eta\longmapsto F_\mu(\eta),
\end{aligned}
\]
defined by the relation
\[
\eta\,\mu = F_\mu(\eta)\,\dvol\qquad \mbox{ satisfies }\qquad F_\mu(-)=\langle-,\star_\l^{-1}\mu\rangle_\l
\]
for a unique element $\star_\l^{-1}\mu\in\Omega^k(B)$. Finally, we also assume that this identification induces an antilinear isomorphism  \[
\begin{aligned}
\star_\l:\Omega^k(B)\longrightarrow \Omega^{n-k}(B),\qquad
\mu&\longmapsto \star_\l\mu.
\end{aligned}
\]
We define the left quantum Hodge star operator as the operator $\star_\l$.

In the same way, we define the right quantum Hodge star operator by
\[
\star_\r := \star_\l\circ\ast.
\]
\end{Definition}

The left quantum Hodge operator satisfies several properties. For instance, by construction,
\begin{equation}
\label{a.f2.4}
\eta\,\mu=\langle\eta,\star_\l^{-1}\mu\rangle_\l\,\dvol,
\end{equation}
for all $\eta\in\Omega^k(B)$ and $\mu\in\Omega^{n-k}(B)$, and the operator $\star_\l^{-1}$ is uniquely determined by this relation.

The following result follows straightforwardly, so we omit its proof.

\begin{Theorem}
\label{a.2.5}
We have
\begin{enumerate}
\item For all $\eta$, $\mu$ $\in$ $\Omega^k(B)$, the following equality holds $$\eta\,(\star_\l \mu)=\langle\eta, \mu\rangle_\l\,\dvol.$$  
\item For all $b$ $\in$ $B$ and $\mu$ $\in$ $\Omega^\bullet(B)$, we get $$\star^{-1}_\l(b\,\mu)=(\star^{-1}_\l\mu)\,b^\ast, \qquad \star^{-1}_\l(\mu\, b)=E(b)^\ast (\star_\l\mu),$$ $$ \star_\l(E(b)^{\ast}\,\mu)=(\star_\l\mu)\,b, \qquad \star_\l(\mu\,b)=b^\ast(\star_\l\mu).$$ 
\item We get that $$\star_\l\mathbbm{1}=\dvol, \quad \star_\l\dvol=\mathbbm{1}.$$   
\item For  $\nu$ $\in$ $\Omega^m(B)$, $\eta$ $\in$ $\Omega^l(B)$, $\mu$ $\in$ $\Omega^k(B)$ such that $m+l+k=n$, we obtain  
$$ \langle\eta,\star^{-1}_\l(\nu\,\mu)\rangle_\l=\langle\eta\,\nu,\star^{-1}_\l\mu\rangle_\l.$$
\item The following formula holds $$\displaystyle \langle \eta\,|\,\mu\rangle_\l=\int_B \eta\,(\star_\l \mu)$$ for all $\eta$, $\mu$ $\in$ $\Omega^\bullet(B)$.
\end{enumerate}
\end{Theorem}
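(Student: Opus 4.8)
The plan is to derive all five parts from the single defining relation \eqref{a.f2.4}, namely $\hat\mu\,\nu=\langle\hat\mu,\star^{-1}_\l\nu\rangle_\l\,\dvol$ for $\hat\mu\in\Omega^k(M)$ and $\nu\in\Omega^{n-k}(M)$, together with the two nondegeneracy statements at our disposal: that of the lqrm (so $\langle\hat\mu,\nu\rangle_\l=\langle\hat\mu,\nu'\rangle_\l$ for all $\hat\mu$ forces $\nu=\nu'$) and that of $\dvol$ (so $q\,\dvol=0\Rightarrow q=0$). Before anything else I would record the elementary consequence of the hermitian axioms used repeatedly below: since each $\langle-,-\rangle_\l$ is left $M$--linear in the first slot and conjugate--symmetric, one gets $\langle\hat\mu,q\,\mu\rangle_\l=\langle\hat\mu,\mu\rangle_\l\,q^\ast$, equivalently $\langle\hat\mu,\mu\rangle_\l\,q=\langle\hat\mu,q^\ast\mu\rangle_\l$, for $q\in M$; together with the stated right--module axiom $\langle\hat\mu p,\mu\rangle_\l=\langle\hat\mu,\mu p^\ast\rangle_\l$ this gives complete control over how elements of $M$ pass through the metric. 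Part (1) is then immediate: substituting $\nu=\star_\l\mu$ (with $\mu\in\Omega^k(M)$, so $\star_\l\mu\in\Omega^{n-k}(M)$) into \eqref{a.f2.4} and using that $\star_\l$ inverts $\star^{-1}_\l$ yields $\hat\mu\,(\star_\l\mu)=\langle\hat\mu,\star^{-1}_\l\star_\l\mu\rangle_\l\,\dvol=\langle\hat\mu,\mu\rangle_\l\,\dvol$, with no nondegeneracy needed. Part (1) then serves as the workhorse for everything else.

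For part (2) the method is uniform: to identify $\star^{-1}_\l(X)$ or $\star_\l(X)$ I test against an arbitrary $\hat\mu$ of the correct degree, rewrite the product $\hat\mu\,X$ using associativity of $\Omega^\bullet(M)$ together with \eqref{a.f2.4} (or part (1)), the commutation rule $\dvol\,p=\varepsilon(p)\,\dvol$ of \eqref{a.f2.0}, and the metric identities recorded above, and finally invoke nondegeneracy to strip the common $\dvol$ and the common first argument. Concretely, $\hat\mu\,(p\mu)=(\hat\mu p)\mu$ gives the first formula through $\langle\hat\mu p,\star^{-1}_\l\mu\rangle_\l=\langle\hat\mu,(\star^{-1}_\l\mu)p^\ast\rangle_\l$, while $\hat\mu\,(\mu p)=(\hat\mu\mu)p$ combined with $\dvol\,p=\varepsilon(p)\,\dvol$ produces the factor $\varepsilon(p)$, which must then be carried into the second slot of the metric — this is exactly where $\langle\hat\mu,\mu\rangle_\l\,q=\langle\hat\mu,q^\ast\mu\rangle_\l$ enters and generates the $\varepsilon(p)^\ast$. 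The two remaining formulas follow by substituting $\mu=\star_\l\nu$, applying $\star_\l$, and relabeling $p\mapsto p^\ast$.

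Parts (3)--(5) are short. For (3) I take $\hat\mu=\mathbbm{1}$ in part (1) to get $\star_\l\mathbbm{1}=\langle\mathbbm{1},\mathbbm{1}\rangle_\l\,\dvol=\dvol$, and I evaluate \eqref{a.f2.4} at $\nu=\mathbbm{1}$, $\hat\mu=\dvol$, writing $\star^{-1}_\l\mathbbm{1}=q\,\dvol$ and solving $q=\mathbbm{1}$ by nondegeneracy, to obtain $\star_\l\dvol=\mathbbm{1}$. Part (4) is pure associativity: applying \eqref{a.f2.4} to $\hat\mu\,(\widetilde\mu\mu)$ and to $(\hat\mu\widetilde\mu)\,\mu$ and cancelling $\dvol$ gives $\langle\hat\mu,\star^{-1}_\l(\widetilde\mu\mu)\rangle_\l=\langle\hat\mu\widetilde\mu,\star^{-1}_\l\mu\rangle_\l$, with the constraint $m+l+k=n$ ensuring all pairings live in matching degrees. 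Part (5) is then immediate from the definition $\langle-|-\rangle_\l=\int_M\langle-,-\rangle_\l\,\dvol$ in \eqref{a.f2.1} and part (1), once one notes that both sides vanish on forms of different degree by the orthogonality convention.

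The main obstacle I anticipate is entirely inside part (2): keeping the bookkeeping straight when an element of $M$ is carried past $\dvol$ (introducing $\varepsilon$) and then past the hermitian structure (introducing $\ast$ and conjugate--symmetry), and verifying that these two moves are compatible so that the stated identities — in particular the split between the $\star^{-1}_\l$-- and $\star_\l$--versions and the appearance of $\varepsilon(p)^\ast$ rather than $\varepsilon(p)$ — come out exactly in the claimed form. Everything else reduces to associativity of the wedge product plus the two nondegeneracy statements, which is why the paper can call the result straightforward.
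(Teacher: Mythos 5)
Your proof is correct, and it is precisely the argument the paper intends: the paper itself gives no proof (the theorem is prefaced only by ``The next result straightforwardly follows''), and every move you make --- testing against an arbitrary $\hat{\mu}$ of complementary degree, rewriting products via Equation \ref{a.f2.4} and associativity, commuting elements of $M$ past $\dvol$ with Equation \ref{a.f2.0}, passing factors through the hermitian structure via $\langle\hat{\mu},\mu\rangle_\l\,q=\langle\hat{\mu},q^\ast\mu\rangle_\l$, and then stripping $\dvol$ and the common first argument using the two nondegeneracy properties --- is exactly the verification the paper is suppressing.

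One finding of your method is worth recording: the bookkeeping you flag as the delicate point in part (2) actually exposes a typo in the printed statement. Your derivation of the second identity yields $\star^{-1}_\l(\mu\,p)=\varepsilon(p)^\ast\,(\star^{-1}_\l\mu)$, not $\varepsilon(p)^\ast\,(\star_\l\mu)$ as printed. The printed version cannot be right in general: applying $\star_\l$ to it and comparing with the third identity (with $\star_\l\mu$ in place of $\mu$) forces $\star_\l\circ\star_\l=\id$ on $\Omega^{n-k}(M)$, which is not assumed (and fails classically, where $\star^2=(-1)^{k(n-k)}\id$); both versions have the correct degree, so only this consistency check detects the slip. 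Note also that your substitution $\mu=\star_\l\nu$ used to obtain the last two identities of part (2) goes through only with the corrected form, so your proof is internally coherent exactly as written, and it is the statement, not your argument, that needs the adjustment.
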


One of the main purposes of introducing the Hodge operator is the construction of the codifferential and the Laplace operator.

 \begin{Definition}
\label{a.2.6} 
Let $(B,\cdot,\mathbbm{1},\ast)$ be a quantum space. By using the left quantum Hodge star operator $\star_\l$, we define the left quantum codifferential as the linear operator
\begin{equation*}
\begin{aligned}
d^{\star_\l} := (-1)^{k+1}\,\star_\l^{-1}\circ\, d \circ \star_\l 
:\Omega^{k+1}(B)\longrightarrow \Omega^k(B),\qquad
\mu \longmapsto d^{\star_\l}\mu.
\end{aligned}
\end{equation*}
For $k+1=0$, we set $d^{\star_\l}=0$.  

In the same way, the right quantum codifferential is defined by
\[
d^{\star_\r}:= (-1)^{k+1}\,\star_\r^{-1}\circ \,d\, \circ \star_\r
= \ast \circ d^{\star_\l} \circ \ast .
\]
\end{Definition}

As in the \emph{classical} case, we obtain

\begin{Theorem}
    \label{codif}
    The map $d^{\star_\l}/d^{\star_\r}$ is the formal adjoint operator of $d$ with respect to the left/right quantum Hodge inner product defined in equation (\ref{a.f2.1}).
\end{Theorem}
\begin{proof}
    Let $\mu\in\Omega^{k+1}(B)$ and $\eta\in\Omega^{k}(B)$. Then $\star_\l\mu\in\Omega^{n-k-1}(B)$ and $\eta\,(\star_\l\mu)\in\Omega^{n-1}(B)$. Hence, by Theorem \ref{a.2.5} point (1) and since $B$ is a quantum space without boundary, we get
\begin{eqnarray*}
0 = \int_B  d(\eta\,(\star_\l\mu))
&= &  \int_B  (d\eta)\star_\l\mu + (-1)^k  \int_B  \eta (d\star_\l\mu) 
  \\
 &= & 
\int_B (d\eta)\star_\l\mu
-
 (-1)^{k+1} \int_B  \eta (\star_\l\star^{-1}_\l d\star_\l\mu)
  \\
&= & 
\int_B \langle d\eta,\mu\rangle_\l\,\dvol  -  \int_B  \eta(\star_\l\, d^{\star_\l}\mu)
  \\
&= & 
\int_B \langle d\eta,\mu\rangle_\l\,\dvol  - \int_B  \langle \eta, d^{\star_\l}\mu\rangle_\l\,\dvol
\end{eqnarray*}
and thus 
 $$\langle d\eta\,|\,\mu\rangle_\l =  \langle \eta\,|\,d^{\star_\l}\mu\rangle_\l.$$  Similarly, it can be proven that 
 $$\langle d\eta\,|\,\mu\rangle_\r =  \langle \eta\,|\,d^{\star_\r}\mu\rangle_\r.$$ 
\end{proof}

Finally, we have

\begin{Definition}
\label{a.2.8} 
Given a quantum space $(B,\cdot,\mathbbm{1},\ast)$ and the left quantum Hodge star operator $\star_\l$, the left quantum Laplace--de Rham operator is defined as 
\begin{equation*}
\vartriangle_\l:=d\circ d^{\star_\l}+d^{\star_\l}\circ d: \Omega^\bullet(B)\longrightarrow \Omega^\bullet(B);
\end{equation*}
while the right quantum Laplace--de Rham operator is given by 
\begin{equation*}
\vartriangle_\r:=d\circ d^{\star_\r}+d^{\star_\r}\circ d: \Omega^\bullet(B)\longrightarrow \Omega^\bullet(B).
\end{equation*}
\end{Definition}

It is worth mentioning that both Laplace--de Rham operators are symmetric and non--negative. The reader can go deeper into the study of all these operators in reference \cite{obauchalla}. 

\subsection{Formal Adjointability of Quantum Linear Connections}

Let $\zeta=(P,B,\Delta_P)$ be a quantum principal bundle endowed with a differential calculus such that the space of base forms satisfies Definition \ref{a.2.1}, and suppose that a left quantum Hodge star operator exists. Let $\delta^V$ be a finite--dimensional $H$--corepresentation. By considering the associated left quantum vector bundle together with its canonical Hermitian structure (see Definition~\ref{qvb's} and equation~(\ref{canoleft0})), we define
\begin{equation}
\label{ashcas}
\begin{aligned}
\langle-,-\rangle^\bullet_\l : \Omega^\bullet(B)\otimes_B E^V_\l\times \Omega^\bullet(B)\otimes_B E^V_\l  &\longrightarrow B
\end{aligned}
\end{equation}
by $$\langle \mu_1\otimes_B T_1,\mu_2\otimes_B T_2\rangle^\bullet_\l= \langle\mu_1 (T_1,T_2)_\l,\mu_2\rangle_\l.$$  By using the quantum integral, we can define the map
\begin{equation}
\label{4.f2.23}
\langle-|-\rangle^\bullet_\l :\Omega^\bullet(B)\otimes_B E^V_\l\times \Omega^\bullet(B)\otimes_B E^V_\l \longrightarrow \C
\end{equation}
as $$\displaystyle \langle\mu_1\otimes_B T_1\,|\,\mu_2\otimes_B T_2\rangle^\bullet_\l = \int_B \langle\mu_1\otimes_B T_1\,,\,\mu_2\otimes_B T_2\rangle^\bullet_\l\,\dvol.$$

On the other hand, by considering the associated right quantum vector bundle and its canonical Hermitian structure (see Definition \ref{qvb's} and equation (\ref{canoright0})), we define 
\begin{equation}
\label{ashcas1}
\begin{aligned}
\langle-,-\rangle^\bullet_\r : E^V_\r\otimes_B\Omega^\bullet(B) \times E^V_\r\otimes_B  \Omega^\bullet(B) &\longrightarrow B
\end{aligned}
\end{equation}
by 
\begin{eqnarray*}
    \langle T_1\otimes_B\mu_1 ,T_2\otimes_B\mu_2 \rangle^\bullet_\r= \langle\mu_1 ,(T_1,T_2)_\r\,\mu_2\rangle_\r
    =\langle\mu^\ast_1 ,\mu^\ast_2\,(T_1,T_2)^\ast_{\r}\rangle_\l=
    \langle\mu^\ast_1\,(T_1,T_2)_{\r} ,\mu^\ast_2\,\rangle_\l,
\end{eqnarray*}
and we can also define the map
\begin{equation}
\label{4.f2.27}
\langle-|-\rangle^\bullet_\r :E^V_\r\otimes_B\Omega^\bullet(B) \times E^V_\r\otimes_B\Omega^\bullet(B)  \longrightarrow \C
\end{equation}
by  
$$\langle T_1\otimes_B\mu_1 \mid T_2\otimes_B \mu_2\rangle^\bullet_\r= \int_B \langle T_1\otimes_B\mu_1 ,T_2\otimes_B\mu_2 \rangle^\bullet_\r\,\dvol. $$

It is worth remarking that $\langle-|-\rangle^\bullet_\l$ and $\langle-|-\rangle^\bullet_\r$ are actually inner products for their respective spaces. This is because the Hermitian structures and quantum Riemannian structures are $B$--valued inner products; so in accordance with reference \cite{lance}, their tensor products are positive--definite. Nevertheless, in Appendix A we show an explicit proof of the last statement by doing the calculations.

\begin{Definition}
 \label{4.2.14} 
Consider the exterior covariant derivative $d^{\nabla^{\omega}_{V}}$ associated with the induced quantum linear connection $\nabla^{\omega}_{V}$ (see equation (\ref{3.f10.3})) and the left quantum Hodge star operator $\star_\l$. We define 
$$d^{\nabla^{\omega}_{V}\star} :\Omega^{k+1}(B)\otimes_B E^V_\l\longrightarrow \Omega^k(B)\otimes_B E^V_\l$$
by $$d^{\nabla^{\omega}_{V}\star}:=(-1)^{k+1} ((\star^{-1}_\l\circ\ast)\otimes_B \id_{E^V_\l}) \circ\; d^{\nabla^{\omega}_{V}}\circ ((\ast\circ\star_\l)\otimes_B \id_{E^V_\l}).$$ For $k+1=0$ we take $d^{\nabla^{\omega}_{V}\star}=0$ and for $k+1=1$ we are going to write $\nabla^{\omega\,\star}_{V}:=d^{\nabla^{\omega}_{V}\star}.$

In the same way,  consider the exterior covariant derivative $d^{\widehat{\nabla}^{\omega}_{V}}$ associated with the induced quantum linear connection $\widehat{\nabla}^{\omega}_{V}$ (see equation (\ref{3.f10.6})) and the right quantum Hodge star operator $\star_\r$. We define
$$d^{\widehat{\nabla}^{\omega}_{V}\star} :E^V_\r\otimes_B \Omega^{k+1}(B)\longrightarrow E^V_\r\otimes_B \Omega^k(B)$$ by
\begin{equation*}
d^{\widehat{\nabla}^{\omega}_{V}\star}:=(-1)^{k+1} (\id_{E^V_\r}\otimes_B \star^{-1}_\r) \circ\; d^{\widehat{\nabla}^{\omega}_V}\circ (\id_{E^V_\r}\otimes_B \star_\r).
\end{equation*}
 For $k+1=0$ we take $d^{\widehat{\nabla}^{\omega}_{\alpha}\star}=0$ and for $k+1=1$ we are going to  write $\widehat{\nabla}^{\omega\,\star}_{\alpha}:=d^{\widehat{\nabla}^{\omega}_{\alpha}\star}$. 
\end{Definition}

Now, we get

\begin{Theorem}
\label{4.2.15}
Let $\omega$ be a qpc and let $\delta^V$ $\in$ $\Obj(\Rep_{H})$.  Then, the operators $d^{\nabla^{\omega}_{V}\star}$, $d^{\widehat{\nabla}^{\omega}_{V}\star}$  are the formal adjoint operators of $d^{\nabla^{\omega}_{V}}$, $d^{\widehat{\nabla}^{\omega}_{V}}$ with respect to the inner products of equations (\ref{4.f2.23}), (\ref{4.f2.27}), respectively.
\end{Theorem}

\begin{proof}
This proof consists of a large calculation. In fact, by definition $$\nabla^{\omega}_{V}(T_2)=\displaystyle \sum_i \mu^{_{D^{\omega}(T_2)}}_i\otimes_B T^\l_i \;\in\; \Omega^1(B)\otimes_B E^V_\l,$$ and one obtains $$d^{\nabla^{\omega}_{V}\star}(\mu_2\otimes_B T_2)=  d^{\star_\l}\mu_2\otimes_B x_2 +(-1)^{k+1}\sum_i \star^{-1}_\l(\mu^{_{D^{\omega}(T_2)}\,\ast}_i(\star_\l\mu_2))\otimes_B T^\l_i $$  for all $\mu_2$ $\in$ $\Omega^{k+1}(B)$, $T_2$ $\in$ $E^V_\l$. Now for $\mu_1$ $\in$ $\Omega^k(B)$ and $T_1$ $\in$ $E^V_\l$, we get

\begin{eqnarray*}
\langle d\mu_1\otimes_B T_1\mid \mu_2\otimes_B T_2\rangle^\bullet_\l&=&  \langle (d\mu_1)\, (T_1,T_2)_\l\mid\mu_2 \rangle_\l \\
 &= & 
\langle d(\mu_1 (T_1,T_2)_\l)\mid\mu_2 \rangle_\l+ (-1)^{k+1}\langle \mu_1 \,d ( T_1,T_2)_\l\mid \mu_2\rangle_\l
  \\
&= & 
\langle \mu_1 (T_1,T_2)_\l\mid d^{\star_\l}\mu_2 \rangle_\l +  (-1)^{k+1}\langle \mu_1 ( \nabla^{\omega}_{V}(T_1),T_2)^\bullet_\l\mid \mu_2\rangle_\l
  \\
&+ & 
(-1)^{k+1}\,\langle \mu_1 (T_1,\nabla^{\omega}_{V}(T_2))^\bullet_\l\mid \mu_2\rangle_\l,
\end{eqnarray*}

\noindent since  $\nabla^{\omega}_V$ is Hermitian (see Theorem \ref{fgs}). By definition, we have $$\langle \mu_1 (T_1,T_2)_\l\mid d^{\star_\l}\mu_2 \rangle_\l=\langle \mu_1 \otimes T_1 \mid d^{\star_\l}\mu_2\otimes T_2 \rangle^\bullet_\l$$ and $$ \langle \mu_1 ( \nabla^{\omega}_{V}( T_1),T_2)^\bullet_\l\mid\mu_2\rangle_\l=\langle \mu_1 \,\nabla^{\omega}_{V}(T_1)\mid\mu_2\otimes_B T_2\rangle^\bullet_\l.$$ Furthermore,   $$\langle \mu_1 (  T_1,\nabla^{\omega}_{V}(T_2))^\bullet_\l\mid\mu_2\rangle_\l= \sum_i \langle\mu_1\otimes_B T_1\mid \star^{-1}_\l(\mu^{_{D^{\omega}(T_2)}\,\ast}_i(\star_\l\mu_2))\otimes_B T^\l_i\rangle^\bullet_\l.$$ Indeed,  $$\langle \mu_1 ( T_1,\nabla^{\omega}_{V}(T_2))^\bullet_\l\mid \mu_2\rangle_\l=\sum_i\langle \mu_1 ( T_1,T^\l_i)_\l \,\mu^{_{D^{\omega}(T_2)}\,\ast}_i\mid\mu_2\rangle_\l;$$ while by Theorem \ref{a.2.5} point $4$ we obtain
\begin{eqnarray*}
    \sum_i \langle\mu_1\otimes_B T_1\mid \star^{-1}_\l(\mu^{_{D^{\omega}(T_2)}\,\ast}_i(\star_\l\mu_2))\otimes_B T^\l_i\rangle^\bullet_\l&=&\sum_i \langle\mu_1 (T_1,T^\l_i)_\l\mid \star^{-1}_\l(\mu^{_{D^{\omega}(T_2)}\,\ast}_i(\star_\l\mu_2))\rangle_\l
    \\
    &=&
    \sum_i\langle \mu_1 ( T_1,T^\l_i)_\l\, \mu^{D^{\omega}(T_2)\,\ast}_i\mid\star^{-1}_\l\star_\l\mu_2\rangle_\l
    \\
    &=&
    \sum_i\langle \mu_1 ( T_1,T^\l_i)_\l\, \mu^{D^{\omega}(T_2)\,\ast}_i\mid\mu_2\rangle_\l;
\end{eqnarray*}
thus the last assertion holds. Now, taking into account these equalities we find 

\begin{eqnarray*}
    \langle d^{\nabla^{\omega}_{V}}(\mu_1\otimes_B T_1)\,|\,\mu_2\otimes_B T_2\rangle^\bullet_\l&=&  \langle d\mu_1\otimes_B T_1\,|\,\mu_2\otimes_B T_2\rangle^\bullet_\l+  
(-1)^k\langle \mu_1 \,\nabla^{\omega}_{V}(T_1)\,|\,\mu_2\otimes_B T_2\rangle^\bullet_\l
\\
&=&
\langle \mu_1 (T_1,T_2)_\l),d^{\star_\l}\mu_2 \rangle_\l+ 
(-1)^{k+1}  \langle \mu_1 (\nabla^{\omega}_{V}(T_1),T_2)^\bullet_\l\mid \mu_2\rangle_\l
\\
&+&
(-1)^{k+1} \langle \mu_1 (T_1,\nabla^{\omega}_{V}(T_2))^\bullet_\l\mid\mu_2\rangle_\l
\\
&+&
(-1)^k\langle \mu_1 \,\nabla^{\omega}_{V}(T_1)\mid\mu_2\otimes_B T_1\rangle^\bullet_\l
\\
&=&
\langle \mu_1 (T_1,T_2)_\l),d^{\star_\l}\mu_2 \rangle_\l
+
(-1)^{k+1} \langle \mu_1 ( T_1,\nabla^{\omega}_{V}(T_2))^\bullet_\l\mid\mu_2\rangle_\l
\\
&=&
\langle \mu_1 \otimes T_1 \mid d^{\star_\l}\mu_2\otimes T_2 \rangle^\bullet_\l
\\
&+&(-1)^{k+1}\sum_i \langle\mu_1\otimes_B T_1\mid \star^{-1}_\l(\mu^{_{D^{\omega}(T_2)}\,\ast}_i(\star_\l\mu_2))\otimes_B T^\l_i\rangle^\bullet_\l
\\
&=&
\langle \mu_1\otimes_B x_1\mid d^{\nabla^{\omega}_{V}\star}(\mu_2\otimes_B T_2)\rangle^\bullet_\l.
\end{eqnarray*}

\noindent By linearity, we conclude that $d^{\nabla^{\omega}_{V}\star}$ is the formal adjoint operator of $d^{\nabla^{\omega}_{V}}$. The proof of the corresponding statement for $d^{\widehat{\nabla}^{\omega}_{V}\star}$ is completely analogous.
\end{proof}

It is worth mentioning that a similar result appears in reference \cite{obauchalla2} in the context of the Chern connection for quantum homogeneous spaces. Of course, there is a natural generalization of the left/right quantum Laplace--de Rham operator to left/right quantum vector bundle--valued forms by means of
\begin{equation}
\label{4.f2.25}
\square^{\omega}_{V}:=d^{\nabla^{\omega}_{V}}\circ d^{\nabla^{\omega}_{V}\star}+d^{\nabla^{\omega}_{V}\star}\circ d^{\nabla^{\omega}_{V}},\qquad \widehat{\square}^{\omega}_{V}:=d^{\widehat{\nabla}^{\omega}_{V}}\circ d^{\widehat{\nabla}^{\omega}_{V}\star}+d^{\widehat{\nabla}^{\omega}_{V}\star}\circ d^{\widehat{\nabla}^{\omega}_{V}}.
\end{equation}
Like in the previous subsection, these Laplacians are symmetric and non--negative.

\section{Yang–Mills Scalar Matter Fields in
Non–Commutative Geometry}

By using the theory developed in the previous sections, we can now achieve our goal: a \emph{non--commutative geometrical} version of the \emph{classical} Yang--Mills theory and scalar matter fields formulated in terms of principal bundles. We begin with the theory of \emph{free (non--interacting) gauge boson fields}.

\subsection{Yang--Mills Theory}

The next definition closely follows the \emph{classical} axiomatic formulation \cite{gtvp}.

\begin{Definition}
\label{6.1.1}
A non--commutative geometrical Yang--Mills model consists of
\begin{enumerate}
\item A quantum $\G$--bundle $\zeta=(P,B,\Delta_P)$, where $B$ is stable under holomorphic calculus.
\item A bicovariant $\ast$--FODC $(\Gamma,d)$ over $H$ such that $\mathfrak{qg}^\#$ is a finite--dimensional $\C$--vector space.
\item A differential calculus on $\zeta$ induced by the previous bicovariant $\ast$--FODC, for which $\Omega^\bullet(B)$ satisfies Definition \ref{a.2.1} and for which a left quantum Hodge star operator exists.
\item For every qpc $\omega$ of $\zeta$, the operator $$d^{DS^\omega}:=\Upsilon_{\mathfrak{qg}^\#}\circ DS^\omega  \circ \Upsilon^{-1}_{\mathfrak{qg}^\#}\qquad \mbox{ with }\qquad DS^\omega=D^\omega-S^\omega$$ is adjointable or formally adjointable in (see equation (\ref{realtensor})) $$ \Upsilon_{\mathfrak{qg}^\#}(\Mor(\ad,\Delta_\H)^\dagger)$$  with respect to $\langle-|-\rangle^\bullet_\l$; while the operator $$ d^{\widehat{DS^\omega}}:=\widehat{\Upsilon}_{\mathfrak{qg}^\#}\circ \widehat{DS^\omega} \circ \widehat{\Upsilon}^{-1}_{\mathfrak{qg}^\#} \qquad \mbox{ with }\qquad \widehat{DS^\omega}=\wedge \circ DS^\omega\circ \wedge$$ is adjointable or formally adjointable in $$ \widehat{\Upsilon}_{\mathfrak{qg}^\#}(\Mor(\ad,\Delta_\Hor)^\dagger)$$ with respect to $\langle-|-\rangle^\bullet_\r$. Of course, we have considered that in the adjointable--case, both inner products induce Hilbert space structures. 

The adjoint (or formally adjoint) operators will be denoted by $$d^{DS^\omega\star}  \qquad \mbox{ and }\qquad d^{\widehat{DS^\omega}\star} ,$$ respectively.
\end{enumerate}
\end{Definition} 

It is worth mentioning that the fourth point of the previous definition is well--defined. In fact, by the second point, we obtain $\ad$ $\in$ $\Obj(\Rep_{H})$ and hence, we can consider their associated qvb's $E^{\mathfrak{qg}^\#}_\l$, $E^{\mathfrak{qg}^\#}_\r$. Consequently, the operators $d^{\nabla^\omega_{\mathfrak{qg}^\#}}$, $d^{\widehat{\nabla}^\omega_{\mathfrak{qg}^\#}}$ make sense. On the other hand, since for every qpc $\omega$ we have $$S^\omega:\Mor(\ad,\Delta_\Hor)\longrightarrow \Mor(\ad,\Delta_\Hor),\qquad \widehat{S}^\omega:\Mor(\ad,\Delta_\Hor)\longrightarrow \Mor(\ad,\Delta_\Hor),$$ the operators 
\begin{equation}
    \label{ec.soperators}
    d^{S^{\omega}}:=\Upsilon_{\mathfrak{qg}^\#}\circ S^{\omega} \circ \Upsilon^{-1}_{\mathfrak{qg}^\#},\qquad d^{\widehat{S}^{\omega}}=\widehat{\Upsilon}_{\mathfrak{qg}^\#}\circ \widehat{S}^{\omega}\circ \widehat{\Upsilon}^{-1}_{\mathfrak{qg}^\#},
\end{equation}
\begin{equation}
    \label{ec.important}
    d^{DS^\omega}=d^{\nabla^\omega_{\mathfrak{qg}^\#}}-d^{S^{\omega}},\qquad d^{\widehat{DS^\omega}}=d^{\widehat{\nabla}^\omega_{\mathfrak{qg}^\#}}-d^{\widehat{S}^{\omega}}
\end{equation}
make sense as well. 
\begin{Remark}
    \label{Sadjoint}
    By Theorem \ref{4.2.15}, the fourth point of Definition \ref{6.1.1} is satisfied if and only if $d^{S^{\omega}}$ and $d^{\widehat{S}^{\omega}}$ are adjointable (or formally adjointable). These  adjoint (or formally adjoint) operators will be denoted by
\begin{equation}
    \label{adjointsoperator}
    d^{S^{\omega}\star},\qquad d^{\widehat{S}^{\omega}\star}.
\end{equation}
\end{Remark}

As the reader will notice in Theorem \ref{6.1.5}, there is no need to require adjointability of the operators on the entire spaces $\Upsilon_{\mathfrak{qg}^\#}(\Mor(\ad,\Delta_\Hor))$ and $\widehat{\Upsilon}_{\mathfrak{qg}^\#}(\Mor(\ad,\Delta_\Hor))$; it is sufficient to require adjointability on the $\R$--linear subspaces $ \Upsilon_{\mathfrak{qg}^\#}(\Mor(\ad,\Delta_\H)^\dagger)$ and $\widehat{\Upsilon}_{\mathfrak{qg}^\#}(\Mor(\ad,\Delta_\Hor)^\dagger)$.

The first two points of Definition \ref{6.1.1} are straightforward to satisfy. However, the third point as well as the adjointability (or formal adjointability) condition, is far less obvious. In the next section we will present two classes of qpb's for which non--commutative geometrical Yang--Mills models always exits. 

Consider a non--commutative geometrical Yang--Mills model and let $\omega$ be a qpc. We know that $R^\omega$ $\in$ $\Mor(\ad,\Delta_\Hor)$  and therefore, we define (see equations (\ref{4.f2.23}), (\ref{4.f2.27}))
\begin{equation}
\label{curnormdef}
    ||R^\omega||_\l^\bullet:=||\Upsilon_{\mathfrak{qg}^\#}(R^\omega)||_\l^\bullet,\qquad  ||R^\omega||_\r^\bullet:=||\widehat{\Upsilon}_{\mathfrak{qg}^\#}(R^\omega)||_\r^\bullet.
\end{equation}
It is worth mentioning that by Remark \ref{remarkinverse}, we obtain that $||R^\omega||_\l^\bullet$ and $||R^\omega||_\r^\bullet$ do not depend on the choice of the generators $\{ T^\l_k\},$ $\{T^\r_k\}$.

In differential geometry, the Yang--Mills action is defined by measuring the squared norm of the curvature of a principal connection (see reference \cite{gtvp} or Appendix C for a brief summary). This observation motivates the following definition.

\begin{Definition}
\label{6.1.2}
Considering Definition \ref{6.1.1}, we define the non--commutative geometrical Yang--Mills action as the association 
\begin{equation*}
\begin{aligned}
\qS_\YM:\mathfrak{qpc}(\zeta) \longrightarrow \R,\qquad
\omega &\longmapsto -\dfrac{1}{4}\left(||R^\omega||_\l^{\bullet\,2}+||R^\omega||_\r^{\bullet\,2}\right).
\end{aligned}
\end{equation*}
\end{Definition}

Now, let us consider the quantum gauge group (see Section 4 of reference \cite{sald2})
\begin{equation*}
        \begin{aligned}
            \qGG=&\{\F:\Omega^\bullet(P)\longrightarrow \Omega^\bullet(P)\mid \F  \mbox{ is a } \mbox{graded left } \Omega^\bullet(B)-\mbox{module isomorphism}  \\ &\mbox{ such that } \F(\mathbbm{1})=\mathbbm{1},\; \Delta_{\Omega^\bullet(P)}\circ \F=(\F\otimes \id_{\Gamma^\wedge})\circ \Delta_{\Omega^\bullet(P)}\mbox{ and } \F(\Im(\omega)^\ast)=\F(\Im(\omega))^\ast \\ 
     & \mbox{ for all }\; \omega \in \mathfrak{qpc}(\zeta) \}.
        \end{aligned}
    \end{equation*}
Elements of $\qGG$ are called \emph{quantum gauge transformations} (abbreviated ``qgt"). It is worth mentioning that in reference \cite{sald2}, the group $\qGG$ is defined without the condition $\F(\Im(\omega)^\ast)=\F(\Im(\omega))^\ast$ for all $\omega \in \mathfrak{qpc}(\zeta)$. In this paper, we have added this condition in the definition of $\qGG$ because in this paper, qpc's $\omega$ satisfy $\omega(\theta^\ast)=\omega(\theta)^\ast$.

The group $\qGG$ is a  generalization at the level of differential calculus of the quantum gauge group presented in reference \cite{br1}. In particular, it is isomorphic to a subgroup of the group of all convolution--invertible maps $$\f:\Gamma^\wedge\longrightarrow \Omega^\bullet(P)$$ such that $\f(\mathbbm{1})=\mathbbm{1}$ and $(\f\otimes \id_{\Gamma^\wedge})\circ \Ad=\Delta_{\Omega^\bullet(P)}\circ \f$, where the map $\Ad$ is given in equation (\ref{2.f11}), as the reader can verify in Proposition 4.2 of reference \cite{appendix}. 

In Durdevich's framework, the action of $\qGG$ on $\mathfrak{qpc}(\zeta)$ given by $$ \omega \longmapsto \f\ast \omega \ast \f^{-1}+ \f\ast (d\circ \f^{-1})$$ is not well--defined. Furthermore, even if we extend the domain of $\omega$ to $G$, the induced action on the curvature $$\f\ast R^\omega \ast \f^{-1} $$ remains ill-defined. However, in accordance with Theorem 4.7 points 1 and 2 of reference \cite{sald2}, $\qGG$ has a well--defined group action on  $\mathfrak{qpc}(\zeta)$ by $$\F^\circledast \omega:=\F\circ \omega,$$ and this formula induces a well--defined action on the curvature (see Proposition 4.8 of reference \cite{sald2}). The reader is encouraged to consult the reference \cite{sald2} for more details. Notice that $\F^\circledast \omega$ is only the {\it dualization} of the action of the gauge group on principal connections via the pull--back in the {\it classical} case.

In general, the quantum gauge group is quite large, so it is very difficult to work with it in full generality.  For this reason, it is common to restrict attention to \emph{ad hoc} subgroups in each situation. For example, in reference \cite{landi04}, the authors study when it is \emph{natural} to work only with the subgroup of $\qGG$ consisting of all degree--zero qgt's that are algebra morphisms. 

For the theory we wish to develop, the quantum gauge group is so large that in general, not every quantum gauge transformation leaves the action $\qS_\YM$ invariant. Therefore, we need to restrict our attention to the following subgroup.

\begin{Definition}
\label{6.1.3}
We define the quantum gauge group of the Yang--Mills model as the subgroup of $\qGG$ consisting of all symmetries of $\qS_\YM$, i.e.,  $$\qGG_\YM:=\{\F \in \qGG\mid \qS_\YM(\omega)=\qS_\YM(\F^{\circledast} \omega)\;\mbox{ for all }\; \omega \in \mathfrak{qpc}(\zeta) \}.$$  
\end{Definition}

In the \emph{classical} case, every principal bundle automorphism, also called a gauge transformation, is a symmetry of the Yang--Mills action. Consequently, in the \emph{dualization} of the \emph{classical} case, we have $$\qGG_\YM\cong \mathfrak{GG}$$ (in this case, $\F$ is the pull--back a principal bundle automorphism acting on $\C$--valued differential forms), where $\mathfrak{GG}$ is the \emph{classical} gauge group. In this sense, Definition~\ref{6.1.3} is a proper generalization of the \emph{classical} gauge group, addressing the fact that in non--commutative geometry, in general, not every element of $\qGG$ is a symmetry of $\qS_\YM$.

\begin{Proposition}
    \label{gaugeym}
    Let $\F:\Omega^\bullet(P)\longrightarrow \Omega^\bullet(P)$ be a quantum gauge transformation such that $\F$ is a graded differential $\ast$--algebra morphism. Then $\F$ $\in$ $\qGG_\YM$.
\end{Proposition}

\begin{proof}
   Let $\omega$ $\in$ $\mathfrak{qpc}(\zeta)$. By Proposition 4.8 of reference \cite{sald2}, we know that $R^{\F^\circledast \omega}=\F \circ R^{\omega}$. 
   
   On the other hand, by equation (\ref{3.f7}) we have $$R^\omega=\sum_k \mu^{_{R^\omega}}_kT^\l_k,$$ where  $\mu^{_{R^{\omega}}}_k=\displaystyle\sum_{i}R^\omega(\theta_i)\,x^{\mathfrak{qg}^\#\,\ast}_{ki}$. Here, $\{\theta_i \}$ is an orthonormal basis (with respect to the inner product that makes $\ad$ a unitary $H$--corepresentation) and $T^\l_k(\theta_i)=x^{\mathfrak{qg}^\#}_{ki}$, where $\{ T^\l_k\}$ is the set of left $B$--generators for $\ad$. In this way, 
   $$R^{\F^\circledast \omega}=\F\circ R^\omega= \sum_k \mu^{_{R^\omega}}_k\, \F \circ T^\l_k;$$ so, for all $\theta$ $\in$ $\mathfrak{qg}^\#$ we obtain $R^{\F^\circledast \omega}(\theta)=\displaystyle \sum_k \mu^{_{R^\omega}}_k\,\F(T^\l_k(\theta))$. 

   By equation (\ref{3.f7}) we get $$R^{\F^\circledast \omega}=\sum_k \mu^{_{R^{\F^\circledast \omega}}}_kT^\l_k,$$ where 
   \begin{eqnarray*}
  \mu^{_{R^{\F^\circledast \omega}}}_k=\displaystyle\sum_{i}R^{\F^\circledast \omega}(\theta_i)\,x^{\mathfrak{qg}^\#\,\ast}_{ki} = \sum_{i,j} \mu^{_{R^\omega}}_j \F(x^{\mathfrak{qg}^\#}_{ji}) \,x^{\mathfrak{qg}^\#\,\ast}_{ki}
  &= &
 \sum_{j} \mu^{_{R^\omega}}_j \left(\sum_{i}\F(x^{\mathfrak{qg}^\#}_{ji}) \,x^{\mathfrak{qg}^\#\,\ast}_{ki}\right)
  \\
  &= &
  \sum_{j} \mu^{_{R^\omega}}_j \,b_{jk}.
\end{eqnarray*}
 By equation (\ref{generators}), it follows that  $b_{jk}=\displaystyle\sum_{i}\F(x^{\mathfrak{qg}^\#}_{ji}) \,x^{\mathfrak{qg}^\#\,\ast}_{ki}$ $\in$ $B$ and  $$\displaystyle \sum_{k}b_{lk}\,b^\ast_{jk}=\displaystyle \sum_{i}\F(x^{\mathfrak{qg}^\#}_{li}\,x^{\mathfrak{qg}^\#\,\ast}_{ji})=\sum_{i}x^{\mathfrak{qg}^\#}_{li}\,x^{\mathfrak{qg}^\#\,\ast}_{ji},$$  where in the last equality we have used that $\displaystyle \sum_{i}x^{\mathfrak{qg}^\#}_{li}\,x^{\mathfrak{qg}^\#\,\ast}_{ji}$ $\in$ $B$ and the fact that $\F$ is a left $\Omega^\bullet(B)$--module morphism.
 
 Following the proof of Proposition \ref{a.1} in Appendix A, we know that
   $$||R^{\omega}||_\l^{\bullet\,2}=\sum_k ||\mu^{_{R^\omega}}_k ||^{\bullet\,2}_\l,\qquad ||R^{_{\F^\circledast \omega}}||_\l^{\bullet\,2}=\sum_k ||\mu^{_{R^{\F^\circledast \omega}}}_k||^{\bullet\,2}_\l.$$ Thus, by   equation (\ref{generators})
\begin{eqnarray*}
  ||R^{_{\F^\circledast \omega}}||_\l^{\bullet\,2}= \sum_k ||\mu^{_{R^{_{\F^\circledast \omega}}}}_k ||^{\bullet\,2}_\l = \sum_k \langle \mu^{_{R^{_{\F^\circledast \omega}}}}_k \mid \mu^{_{R^{_{\F^\circledast \omega}}}}_k\rangle^\bullet_\l
  &= &
 \sum_{k,j,l} \langle \mu^{_{R^\omega}}_j\,b_{jk}\mid \mu^{_{R^\omega}}_l b_{lk}\rangle^\bullet_\l
 \\
  &= &
 \sum_{k,j,l} \langle \mu^{_{R^\omega}}_j\mid \mu^{_{R^\omega}}_l b_{lk}\,b^\ast_{jk}\rangle^\bullet_\l
 \\
  &= &
  \sum_{i,j,l} \langle \mu^{_{R^\omega}}_j\mid  \mu^{_{R^\omega}}_l \,x^{\mathfrak{qg}^\#}_{li}\,x^{\mathfrak{qg}^\#\,\ast}_{ji}\rangle^\bullet_\l
  \\
  &= & 
   \sum_{i,j,l,s} \langle \mu^{_{R^\omega}}_j\mid 
    R^\omega(\theta_s)\, x^{\mathfrak{qg}^\#\,\ast}_{ls} \,x^{\mathfrak{qg}^\#}_{li}\,x^{\mathfrak{qg}^\#\,\ast}_{ji}\rangle^\bullet_\l
   \\
  &= &
  \sum_{i,j,s}\langle \mu^{_{R^\omega}}_j\mid R^\omega(\theta_s)\, \delta_{si}\,x^{\mathfrak{qg}^\#\,\ast}_{ji}\rangle^\bullet_\l
  \\
  &= &
  \sum_{i,j}\langle \mu^{_{R^\omega}}_j\mid R^\omega(\theta_i)\,x^{\mathfrak{qg}^\#\,\ast}_{ji}\rangle^\bullet_\l
   \\
  &= &
  \sum_{j}\langle \mu^{_{R^\omega}}_j\mid \mu^{_{R^\omega}}_j\rangle^\bullet_\l 
  \\
  &= &
  \sum_{j} ||\mu^{_{R^\omega}}_j ||^{\bullet\,2}_\l=||R^\omega||^{\bullet\,2}_\l.
\end{eqnarray*}

A completely analogous calculation using equation (\ref{3.f7.5}) instead of equation (\ref{3.f7}) shows that $ ||R^{_{\F^\circledast \omega}}||_\r^{\bullet\,2}=||R^\omega||^{\bullet\,2}_\r$. Therefore  $\F$ $\in$ $\qGG_\YM$.
\end{proof}

Clearly, in general, there can be elements of $\qGG_\YM$ that are not graded differential $\ast$--algebra morphisms. A concrete example is provided by Theorem~3.12 and Remark 4.10 of reference \cite{sald6}. 

Our next step is to derive the \emph{field equation} for $\omega\in\mathfrak{qpc}(\zeta)$ by postulating that the first variation of the non--commutative geometrical Yang--Mills action vanishes, in complete agreement with the \emph{classical} case.

\begin{Definition}
\label{6.1.4}
 A stationary point of $\qS_\YM$ is an element $\omega$ $\in$ $\mathfrak{qpc}(\zeta)$ such that for any $\lambda$ $\in$ $\overrightarrow{\mathfrak{qpc}(\zeta)}$ (see equation (\ref{2.f24.2}))  $$\left.\dfrac{d}{d t}\right|_{t=0}\qS_\YM(\omega + t\,\lambda)=0$$ for $t$ $\in$ $\R$. Stationary points are also called Yang--Mills qpc's or non--commutative geometrical Yang--Mills fields. In terms of a traditional physical interpretation, they should be considered as {\it free--interaction gauge boson fields possessing the symmetry} $\qGG_\YM$.
\end{Definition}

It is worth recalling that $\overrightarrow{\mathfrak{qpc}(\zeta)}$ is a $\R$--vector space, so the expression $t\,\lambda$ makes sense for $t$ $\in$ $\R$. Moreover, $\qS_\YM(\omega+t\,\lambda)$ depends polynomially on $t$, so we interpret the derivative with respect to $t$ of $\qS_\YM
$ as a formal derivative in the obvious way.

\begin{Theorem}
\label{6.1.5}
 A qpc $\omega$ is a Yang--Mills qpc if and only if
\begin{equation}
\label{6.f1.1}
\begin{aligned}
   \mathrm{Re}\left( \langle d^{DS^\omega\star}(R^{\omega})\mid  \Upsilon_{\mathfrak{qg}^\#}(\lambda)\rangle^\bullet_\l\right.+ \left. \langle d^{\widehat{DS^\omega}\star}(R^{\omega})\mid  \widehat{\Upsilon}_{\mathfrak{qg}^\#}(\lambda)\rangle^\bullet_\r\right)=0
\end{aligned}
\end{equation} 
for all $\lambda$ $\in$ $\overrightarrow{\mathfrak{qpc}(\zeta)}$, where $\mathrm{Re}(z)$ denotes the real part of $z$ $\in$ $\C$ and we have defined $$d^{DS^\omega\star}(R^{\omega}):=d^{DS^\omega\star}( \Upsilon_{\mathfrak{qg}^\#}(R^{\omega})) \qquad \mbox{ and }\qquad d^{\widehat{DS^\omega}\star} R^{\omega}:=d^{\widehat{DS^\omega}\star} (\widehat{\Upsilon}_{\mathfrak{qg}^\#}(R^{\omega})).$$
\end{Theorem}

\begin{proof}
Let $\lambda$ $\in$ $\overrightarrow{\mathfrak{qpc}(\zeta)}$ and $t$ $\in$ $\R$. Then, by equation (\ref{2.f28}) we have
\begin{eqnarray*}
R^{\omega+ t\,\lambda} = d(\omega+t\,\lambda)-\langle \omega+t\,\lambda,\omega+t\,\lambda\rangle
  &= &
 d\omega+t\,d\lambda-\langle\omega,\omega\rangle-t\,\langle \omega,\lambda\rangle-t\,\langle \lambda,\omega\rangle
- 
 t^2\,\langle\lambda,\lambda\rangle
 \\
  &= &
  R^\omega+ t\,(d\lambda-\langle \omega,\lambda\rangle-\langle\lambda,\omega\rangle)-t^2\,\langle\lambda,\lambda\rangle.
\end{eqnarray*}
According to equation (\ref{twcoder}) and remembering that $\Im(\lambda)$ $\subseteq$ $\Hor^1P$, we get
$$DS^\omega (\lambda)=d\lambda-\langle \omega,\lambda\rangle-\langle\lambda,\omega\rangle\quad \Longrightarrow \quad R^{\omega+ t\,\lambda}= R^\omega+t\,DS^\omega(\lambda)-t^2\langle\lambda,\lambda\rangle.$$  It follows that
\begin{eqnarray*}
 ||R^{\omega+t\lambda}||_\l^{\bullet \,2} =  ||\Upsilon_{\mathfrak{qg}^\#}(R^{\omega+t\lambda})||_\l^{\bullet\,2}
  &= &
  \langle\Upsilon_{\mathfrak{qg}^\#}(R^{\omega+t\lambda})\mid \Upsilon_{\mathfrak{qg}^\#}(R^{\omega+t\lambda}) \rangle_\l^\bullet
  \\
  &= &
  \langle \Upsilon_{\mathfrak{qg}^\#}(R^{\omega})\mid \Upsilon_{\mathfrak{qg}^\#}(R^{\omega})\rangle^\bullet_\l
  \\
  &+ &
  t\,\langle \Upsilon_{\mathfrak{qg}^\#}(R^{\omega})\mid \Upsilon_{\mathfrak{qg}^\#}(DS^\omega(\lambda))\rangle^\bullet_\l
  \\
  &+ &
  t^{2}\, \langle \Upsilon_{\mathfrak{qg}^\#}(R^{\omega})\mid \Upsilon_{\mathfrak{qg}^\#}(\langle\lambda,\lambda\rangle)\rangle^\bullet_\l
  \\
  &+ &
  t \,\langle \Upsilon_{\mathfrak{qg}^\#}(DS^\omega(\lambda))\mid \Upsilon_{\mathfrak{qg}^\#}(R^{\omega})\rangle^\bullet_\l
  \\
   &+ &
   \\
  & \vdots & 
  \\
  &+ & t^{4}\,\langle \Upsilon_{\mathfrak{qg}^\#}(\langle\lambda,\lambda\rangle)\mid \Upsilon_{\mathfrak{qg}^\#}(\langle\lambda,\lambda\rangle)\rangle^\bullet_\l.
\end{eqnarray*}

By taking the derivative with respect to $t$ in the last polynomical expression and evaluate it in $t=0$, we conclude
\begin{eqnarray*}
    \left.\dfrac{d}{d t}\right|_{t=0} ||R^{\omega+t\,\lambda}||_\l^{\bullet \,2} &=& \langle \Upsilon_{\mathfrak{qg}^\#}(R^{\omega})\mid \Upsilon_{\mathfrak{qg}^\#}(DS^\omega(\lambda))\rangle^\bullet_\l
     +
    \langle \Upsilon_{\mathfrak{qg}^\#}(DS^\omega(\lambda))\mid \Upsilon_{\mathfrak{qg}^\#}(R^{\omega})\rangle^\bullet_\l
    \\
    &=&
    2\, \mathrm{Re}\left( \langle \Upsilon_{\mathfrak{qg}^\#}(R^{\omega})\mid \Upsilon_{\mathfrak{qg}^\#}(DS^\omega (\lambda))\rangle^\bullet_\l \right).
\end{eqnarray*}

 Notice $$\Upsilon_{\mathfrak{qg}^\#}(DS^\omega (\lambda))=d^{DS^\omega}(\Upsilon_{\mathfrak{qg}^\#}(\lambda)).$$ Since $R^\omega$, $\lambda$ $\in$ $\Mor(\ad,\Delta_\Hor)^\dagger$ and $d^{DS^\omega}$ is adjointable (or formally adjointable) in the space $\Upsilon_{\mathfrak{qg}^\#}(\Mor(\ad,\Delta_\Hor)^\dagger)$, we obtain
\begin{eqnarray*}
\left.\dfrac{d}{d t}\right|_{t=0} ||R^{\omega+t\lambda}||_\l^{\bullet \,2} &=&  2\, \mathrm{Re}\left( \langle \Upsilon_{\mathfrak{qg}^\#}(R^{\omega})\mid \Upsilon_{\mathfrak{qg}^\#}(DS^\omega(\lambda))\rangle^\bullet_\l \right)
  \\
  &= &
  2\, \mathrm{Re}\left( \langle \Upsilon_{\mathfrak{qg}^\#}(R^{\omega})\mid  d^{DS^\omega}(\Upsilon_{\mathfrak{qg}^\#}(\lambda))\rangle^\bullet_\l \right)
  \\
  &= &
  2\, \mathrm{Re}\left( \langle d^{DS^\omega\star}(R^{\omega})\mid    \Upsilon_{\mathfrak{qg}^\#}(\lambda)\rangle^\bullet_\l\right).
\end{eqnarray*}

On the other hand,  by equation (\ref{twcoder1}) we have 
$$\widehat{DS^\omega} (\lambda)=d\lambda-\langle \omega,\lambda\rangle-\langle\lambda,\omega\rangle\quad\Longrightarrow \quad R^{\omega+ t\,\lambda}= R^\omega+t\,\widehat{DS^\omega}(\lambda)-t^2\langle\lambda,\lambda\rangle.$$ Now, a direct calculation as before proves that $$\left.\dfrac{d}{d t}\right|_{t=0} ||R^{\omega+t\lambda}||_\r^{\bullet \,2} = 2\, \mathrm{Re}\left( \langle d^{\widehat{DS^\omega}\star}(R^{\omega})\mid  \widehat{\Upsilon}_{\mathfrak{qg}^\#}(\lambda)\rangle^\bullet_\r\right)$$
and the theorem follows.
\end{proof}

We will refer to equation~(\ref{6.f1.1}) as the \emph{non--commutative geometrical Yang--Mills equation}. Notice that every flat qpc is a Yang--Mills qpc, since it trivially satisfies this equation and of course, the group $\qGG_\YM$ acts on the space of Yang--Mills qpc's.

According to Appendix C, in the \emph{classical} case, the non--commutative geometrical Yang--Mills equation coincides with the (\emph{dualization via the pull--back} of the) \emph{classical} one. 

It is important to remark that the twisted covariant derivative naturally appears both in the non--commutative geometrical Bianchi identity (see equation (\ref{a.5})) and in the non--commutative geometrical Yang--Mills equation. \emph{In the case of the Bianchi identity}, it arises naturally when computing $D^\omega R^\omega$ (which, in the non--commutative geometrical setting, is not equal to zero in general). \emph{In the case of the Yang--Mills equation}, it naturally appears when taking the variation of $R^\omega$ with respect to $\lambda$.

The operator $$DS^\omega:=D^\omega - S^\omega$$ is the \emph{correct} covariant derivative on $\Mor(\ad,\Delta_\Hor)$ for working with arbitrary qpc's and for allowing embeddings of $\mathfrak{qg}^\#$ into $\mathfrak{qg}^\#\otimes\mathfrak{qg}^\#$ beyond the classical choice $\displaystyle-{1\over 2}\,\mathrm{c}^T$. A concrete example illustrating this phenomenon is presented in references \cite{sald5,sald6}.

\subsection{Non--Commutative Geometrical Yang--Mills Scalar Matter Theory}

As in the \emph{classical} case, we will start by presenting the necessary axioms of the theory.

\begin{Definition}
\label{6.1.11}
A non--commutative geometrical Yang--Mills scalar matter model consists of
\begin{enumerate}
\item A non--commutative geometrical Yang--Mills model (see Definition \ref{6.1.1}).
\item A finite--dimensional $H$--corepresentation $\delta^V$.
\item A polynomial function ${\mathcal V}:\R \longrightarrow \R $  called {\it the potential}.
\end{enumerate}
\end{Definition} 

It is worth mentioning that the potential ${\mathcal V}$ is taken to be a polynomial function, since this is the only type of potential that appears in the \emph{classical} gauge theory setting (see, for example, \cite{gtvp}).

\begin{Definition}
\label{6.1.12} 
Considering Definition \ref{6.1.11}, we define the non--commutative geometrical Yang--Mills scalar matter action as the association 
\begin{equation*}
\qS_\YMSM: \mathfrak{qpc}(\zeta) \times E^V_\l\times E^{\overline{V}}_\r \longrightarrow \R
\end{equation*}
given by $$\qS_\YMSM(\omega,T_1,T_2)=\qS_\YM(\omega)+\qS_\SM(\omega,T_1,T_2),$$
where $\qS_\YM$ is the non--commutative geometrical Yang--Mills action and $\qS_\SM$ is the non--commutative geometrical scalar matter action, which is given by
\begin{equation*}
\begin{aligned}
    \qS_\SM(\omega,T_1,T_2)=\dfrac{1}{4}\left(|| \nabla^{\omega}_{V}T_1||^{\bullet \,2}_\l-{\mathcal V}_\l(T_1)- || \widehat{\nabla}^{\omega}_{\overline{V}}T_2||^{\bullet \,2}_\r+{\mathcal V}_\r(T_2) \right),
\end{aligned}
\end{equation*}
where ${\mathcal V}_\l(T_1):={\mathcal V}\circ \langle T_1\mid T_1\rangle^\bullet_\l$ and ${\mathcal V}_\r(T_2):={\mathcal V}\circ \langle T_2\mid T_2\rangle^\bullet_\r$. 
\end{Definition}

As in the previous subsection and for the same reasons, we can define the group of symmetries of the model.

\begin{Definition}
\label{6.1.13}
We define the quantum gauge group of the Yang-Mills--Matter model as the group $$\qGG_\YMSM:=\{\F \in \qGG\mid \qS_\YMSM(\omega,T_1,T_2)= \qS_\YMSM(\F^\circledast \omega,\mathbf{A}_\F(T_1),\widehat{\mathbf{A}}_\F(T_2))\;\; \mbox{ for all } $$ $$(\omega,T_1,T_2)\, \in \, \mathfrak{qpc}(\zeta)\times E^V_\l\times E^{\overline{V}}_\r \} \subseteq \qGG,$$ where $$\mathbf{A}_\F: E^V_\l\longrightarrow E^V_\l,\qquad \mathbf{A}_\F(T)(v)=\F(T(v)),$$ $$\widehat{\mathbf{A}}_\F: E^{\overline{V}}_\r\longrightarrow E^{\overline{V}}_\r,\qquad \widehat{\mathbf{A}}_\F(T)(v)=\F(T(v)^\ast)^\ast.$$
\end{Definition}

As in the previous subsection, we have

\begin{Proposition}
    \label{gaugesclar}
    Let $\F:\Omega^\bullet(P)\longrightarrow \Omega^\bullet(P)$ be a quantum gauge transformation such that $\F$ is a graded differential $\ast$--algebra morphism. Then $\F$ $\in$ $\qGG_\YMSM$.
\end{Proposition}

\begin{proof}
  By Proposition \ref{gaugeym}, we know that $\qS_\YM(\F^\circledast \omega)=\qS_\YM(\omega)$ for every $\omega$ $\in$ $\mathfrak{qpc}(\zeta)$. So, it is sufficient to prove that $\qS_\SM(\F^\circledast \omega,\mathbf{A}_\F(T_1),\widehat{\mathbf{A}}_\F(T_2))=\qS_\SM(\omega,T_1,T_2)$ for all $(\omega,T_1,T_2)$ $\in$ $\mathfrak{qpc}(\zeta)\times E^V_\l\times E^{\overline{V}}_\r$.
  
  By Corollary 4.12 of reference \cite{sald2}, we have $$\nabla^{\F^{\circledast}\omega}_{V}=(\id_{\Omega^\bullet(B)}\otimes_B \mathbf{A}_\F)\circ \nabla^{\omega}_{V}\circ \mathbf{A}^{-1}_{\F},$$ then, by equation (\ref{3.f7}) one gets
  \begin{eqnarray*}
 \nabla^{\F^{\circledast}\omega}_{V}(\mathbf{A}_{\F}(T_1))  = (\id_{\Omega^\bullet(B)}\otimes_B \mathbf{A}_\F)\nabla^{\omega}_{V}T_1 = 
 \sum_k \mu^{_{D^\omega\circ T_1}}\otimes_B \mathbf{A}_\F(T^\l_k).
\end{eqnarray*}
In this way, by Proposition 4.13 of reference \cite{sald2} we obtain
\begin{eqnarray*}
||\nabla^{\F^{\circledast}\omega}_{V}(\mathbf{A}_{\F}(T_1))||^{\bullet\,2}_\l&=&\langle \nabla^{\F^{\circledast}\omega}_{V}(\mathbf{A}_{\F}(T_1))\mid \nabla^{\F^{\circledast}\omega}_{V} (\mathbf{A}_{\F}(T_1))\rangle^\bullet_\l
\\
  &= &
 \sum_k \langle \mu^{_{D^\omega\circ T_1}} \langle\mathbf{A}_{\F}(T^\l_k),\mathbf{A}_{\F}(T^\l_k)\rangle_\l \mid \mu^{_{D^\omega\circ T_1}} \rangle^\bullet
  \\
  &= &
  \sum_k \langle \mu^{_{D^\omega\circ T_1}} \langle T^\l_k, T^\l_k\rangle_\l \mid \mu^{_{D^\omega\circ T_1}} \rangle^\bullet
= 
  \langle \nabla^{\omega}_{V}T_1\mid \nabla^{\omega}_{V} T_1\rangle^\bullet_\l
=
  ||\nabla^{\omega}_{V}T_1||^{\bullet\,2}_\l.
\end{eqnarray*}
  On the other hand, by Proposition 4.13 of reference \cite{sald2} we get 
  \begin{eqnarray*}
 {\mathcal V}_\l(\mathbf{A}_{\F}(T_1)) ={\mathcal V} \circ \langle \mathbf{A}_{\F}(T_1)\mid \mathbf{A}_{\F}(T_1)\rangle^\bullet_\l
&=& 
 {\mathcal V} \circ \langle \mathbbm{1} \langle \mathbf{A}_{\F}(T_1), \mathbf{A}_{\F}(T_1)\rangle_\l\mid \mathbbm{1}\rangle^\bullet_\l\\
  &= &
 {\mathcal V} \circ \langle \mathbbm{1} \langle T_1, T_1\rangle_\l\mid \mathbbm{1}\rangle^\bullet_\l
  \\
  &= &
  {\mathcal V} \circ \langle T_1\mid T_1\rangle^\bullet_\l={\mathcal V}_\l(T_1).
\end{eqnarray*}
Since the following relation holds (\cite{sald2}): $$\widehat{\nabla}^{\F^{\circledast}\omega}_{\overline{V}}=(\widehat{\mathbf{A}}_\F\otimes_B \id_{\Omega^\bullet(B)} )\circ \widehat{\nabla}^{\omega}_{\overline{V}}\circ \widehat{\mathbf{A}}^{-1}_{\F},$$ an analogous calculation shows that $$||\widehat{\nabla}^{\F^{\circledast}\omega}_{\overline{V}}(\widehat{\mathbf{A}}_{\F}(T_2))||^{\bullet\,2}_\r=||\widehat{\nabla}^{\omega}_{\overline{V}}T_2||^{\bullet\,2}_\r,\qquad {\mathcal V}_\r(\widehat{\mathbf{A}}_{\F}(T_2))= {\mathcal V}_\r(T_2)$$ and therefore, $\F$ $\in$ $\qGG_\YMSM$.
\end{proof}

Clearly, in general, there can be elements of $\qGG_\YMSM$ that are not graded differential $\ast$--algebra morphisms. A concrete example is provided by Proposition 5.8 and Remark 5.9 of reference \cite{sald6}.

As in the previous subsection, the next step is to derive the \emph{non--commutative geometrical field equations} for
$(\omega,T_1,T_2)$ $\in$ $\mathfrak{qpc}(\zeta)\times E^V_\l\times E^{\overline{V}}_\r$ by postulating that the first variation of $\qS_\YMSM$ vanishes, in complete agreement with the \emph{classical} case.

\begin{Definition}
\label{6.1.14}
A stationary point of $\qS_\YMSM$ is a triplet $(\omega,T_1,T_2)$ $\in$ $\mathfrak{qpc}(\zeta)\times E^V_\l\times E^{\overline{V}}_\r$ such that for any $(\lambda, U_1,U_2)$ $\in$ $\overrightarrow{\mathfrak{qpc}(\zeta)}  \times E^V_\l\times E^{\overline{V}}_\r$, we have  $$\left.\dfrac{d}{d t}\right|_{t=0}\qS_\YMSM(\omega+t\,\lambda ,T_1,T_2)=\left.\dfrac{\partial}{\partial z}\right|_{z=0}\qS_\YMSM(\omega,T_1+z\,U_1,T_2+z\,U_2)=0.$$ Stationary points are also called non--commutative geometrical Yang--Mills scalar matter fields and in terms of a traditional physical interpretation, they can be interpreted as {\it scalar matter and antimatter fields coupled to gauge boson fields possessing the symmetry $\qGG_\YMSM$ in the presence of the potential ${\mathcal V}$}.
\end{Definition}

Notice that $\qS_\YMSM(\omega+t\,\lambda,T_1,T_2)$ depends polynomially on $t$, and that $\qS_\YMSM(\omega,T_1+z\,U_1,T_2+z\,U_2)$ depends polynomially on $z$. Hence, we interpret both derivatives of $\qS_\YMSM$ as formal derivatives in the obvious way. We now proceed to derive the equations of motion.

\begin{Theorem}
\label{6.1.15}
The triple $(\omega,T_1,T_2)$ $\in$ $\mathfrak{qpc}(\zeta)\times E^V_\l\times E^{\overline{V}}_\r$ is a Yang--Mills scalar matter field if and only if for all $\lambda$ $\in$ $\overrightarrow{\mathfrak{qpc}(\zeta)}$ 
\begin{equation}
\label{6.f1.4}
\begin{aligned}
\mathrm{Re}( \langle \Upsilon_{V}(K^{\lambda}(T_1))\,|\,\nabla^{\omega}_{V}T_1\rangle^\bullet_\l &-  \langle \widehat{\Upsilon}_{\overline{V}}(\widehat{K}^{\lambda}(T_2))\,|\,\widehat{\nabla}^{\omega}_{\overline{V}}T_2\rangle^\bullet_\r  
\\
& = \langle d^{DS^\omega\star}(R^{\omega})\mid  \Upsilon_{\mathfrak{qg}^\#}(\lambda)\rangle^\bullet_\l+  \langle(d^{\widehat{DS}^{\omega}\star}(R^{\omega})\mid  \widehat{\Upsilon}_{\mathfrak{qg}^\#}(\lambda)\rangle^\bullet_\r),
\end{aligned}
\end{equation}
where the operators $K^{\lambda}$, $\widehat{K}^\lambda$ are defined in Definition \ref{koperator}; and 
\begin{equation}
\label{6.f1.5}
\nabla^{\omega\,\star}_{V}\left(\nabla^{\omega}_{V}\,T_1\right)-{\mathcal V}'_\l(T_1)\,T_1=0,
\end{equation}
\begin{equation}
\label{6.f1.6}
\widehat{\nabla}^{\omega\,\star}_{\overline{V}} \left(\widehat{\nabla}^{\omega}_{\overline{V}}\,T_2\right)-{\mathcal V}'_\r(T_2)\,T_2 =0,
\end{equation}
with ${\mathcal V}'_\l(T_1):={\mathcal V}'\circ \langle T_1\mid T_1\rangle^\bullet_\l$ and ${\mathcal V}'_\r(T_2):={\mathcal V}'\circ \langle T_2\mid T_2\rangle^\bullet_\r$, where ${\mathcal V}'$ is the derivative of ${\mathcal V}$.
\end{Theorem}

\begin{proof}
By the first part of equation (\ref{2.f30.1.1}) and equation (\ref{3.f8}), we have
\begin{eqnarray*}
    || \nabla^{\omega+t\lambda}_{V}T_1||^{\bullet \,2}_\l&=&\langle\nabla^{\omega+t\lambda}_{V}T_1\mid \nabla^{\omega+t\lambda}_{V}T_1 \rangle_\l^\bullet
    \\
    &=&
    \langle\nabla^{\omega}_{V}T_1+t\,\Upsilon_{V}(K^{\lambda}(T_1))\mid \nabla^{\omega}_{V}T_1+t\,\Upsilon_{V}(K^{\lambda}(T_1)) \rangle_\l^\bullet
    \\
    &=&
    \langle\nabla^{\omega}_{V}T_1\mid \nabla^{\omega}_{V}T_1 \rangle_\l^\bullet+ t\,\langle\nabla^{\omega}_{V}T_1\mid \Upsilon_{V}(K^{\lambda}(T_1)) \rangle_\l^\bullet
    \\
    &+&
    \langle \Upsilon_{V}(K^{\lambda}(T_1))\mid \nabla^{\omega}_{V}T_1\rangle_\l^\bullet+ t^2\,\langle \Upsilon_{V}(K^{\lambda}(T_1))\mid \Upsilon_{V}(K^{\lambda}(T_1))\rangle_\l^\bullet.
\end{eqnarray*}
Hence 
\begin{eqnarray*}
 \left.\dfrac{d}{d t}\right|_{t=0} ||\nabla^{\omega+t\lambda}_{V}T_1||_\l^{\bullet \,2} &=&  \langle\nabla^{\omega}_{V}T_1\mid \Upsilon_{V}(K^{\lambda}(T_1)) \rangle_\l^\bullet
+ 
  \langle \Upsilon_{V}(K^{\lambda}(T_1))\mid \nabla^{\omega}_{V}T_1\rangle_\l^\bullet
  \\
  &= &
  2\, \mathrm{Re}\left( \langle \Upsilon_{V}(K^{\lambda}(T_1))\mid \nabla^{\omega}_{V}T_1\rangle_\l^\bullet\right).
\end{eqnarray*}

Similarly, by the second part of equation (\ref{2.f30.1.1}) and equation (\ref{3.f8.1}), we obtain
$$\left.\dfrac{d}{d t}\right|_{t=0} ||\widehat{\nabla}^{\omega+t\lambda}_{\overline{V}}T_2||_\r^{\bullet \,2}=2\, \mathrm{Re}\left( \langle \widehat{\Upsilon}_{\overline{V}}(\widehat{K}^{\lambda}(T_2))\,|\,\widehat{\nabla}^{\omega}_{\overline{V}}T_2\rangle^\bullet_\r\right).$$ Furthermore, by  Theorem \ref{6.1.5}   $$\left.\dfrac{d}{d t}\right|_{t=0} \left(||R^{\omega+t\lambda}||_\l^{\bullet \,2}+ ||R^{\omega+t\lambda}||_\r^{\bullet \,2}\right)=2\,\mathrm{Re}\left( \langle d^{DS^\omega\star} (R^{\omega})\mid  \Upsilon_{\mathfrak{qg}^\#}(\lambda)\rangle^\bullet_\l+ \langle d^{\widehat{DS^\omega}\star} (R^{\omega})\mid  \widehat{\Upsilon}_{\mathfrak{qg}^\#}(\lambda)\rangle^\bullet_\r\right).$$
Therefore, equation (\ref{6.f1.4}) holds for every $\lambda$ $\in$ $\overrightarrow{\mathfrak{qpc}(\zeta)}$ if and only if $$\displaystyle\left.\dfrac{d}{d t}\right|_{t=0}\qS_\YMSM(\omega+t\,\lambda ,T_1,T_2)=0$$ for every $\lambda$ $\in$ $\overrightarrow{\mathfrak{qpc}(\zeta)}$.

On the other hand, 
\begin{eqnarray*}
  ||\nabla^{\omega}_{V}(T_1+z\,U_1)||_\l^{\bullet \,2} &=&  \langle\nabla^{\omega}_{V}(T_1+z\,U_1)\mid \langle\nabla^{\omega}_{V}(T_1+z\,U_1) \rangle_\l^\bullet
  \\
  &= &
  \langle\nabla^{\omega}_{V}T_1+z\,\nabla^{\omega}_{V}U_1\mid \langle\nabla^{\omega}_{V}T_1+z\,\nabla^{\omega}_{V}U_1 \rangle_\l^\bullet
  \\
  &= &
  \langle\nabla^{\omega}_{V}T_1 \mid \nabla^{\omega}_{V}T_1\rangle_\l^\bullet+z^\ast\,\langle\nabla^{\omega}_{V}T_1 \mid \nabla^{\omega}_{V}U_1 \rangle_\l^\bullet
  \\
  &+ &
  z\,\langle \nabla^{\omega}_{V}U_1\mid \nabla^{\omega}_{V}T_1\rangle_\l^\bullet+zz^\ast\,\langle\nabla^{\omega}_{V}U_1 \mid \nabla^{\omega}_{V}U_1 \rangle_\l^\bullet.
\end{eqnarray*}
Thus 
\begin{eqnarray*}
 \left.\dfrac{\partial}{\partial z}\right|_{z=0} ||\nabla^{\omega}_{V}(T_1+z\,U_1)||_\l^{\bullet \,2} =  \langle \nabla^{\omega}_{V}U_1\mid \nabla^{\omega}_{V}T_1\rangle_\l^\bullet
  = 
  \langle U_1\mid \nabla^{\omega\,\star}_{V}\left(\nabla^{\omega}_{V}T_1\right)\rangle_\l^\bullet.
\end{eqnarray*}
Also we have
\begin{eqnarray*}
  {\mathcal V}_\l(T_1+z\,U_1) =  {\mathcal V}_\l(\langle T_1+zU_1\mid T_1+z\,U_1\rangle^\bullet_\l )
  &= &
  {\mathcal V}_\l(\langle T_1 \mid T_1\rangle^\bullet_\l+z^\ast\,\langle T_1 \mid U_1\rangle^\bullet_\l
  \\
  &+ &
  z\,\langle U_1 \mid T_1\rangle^\bullet_\l+z\,z^\ast\,\langle U_1 \mid U_1\rangle^\bullet_\l),
\end{eqnarray*}
and since ${\mathcal V}_\l$ is a polynomial function, we get $$\left.\dfrac{\partial}{\partial z}\right|_{z=0} {\mathcal V}_\l(T_1+z\,U_1) =   {\mathcal V}'_\l(T_1)\,\langle U_1 \mid T_1\rangle^\bullet_\l=\langle U_1 \mid {\mathcal V}'_\l(T_1)\,T_1\rangle^\bullet_\l.$$ It follows that 
\begin{eqnarray*}
    \left.\dfrac{\partial}{\partial z}\right|_{z=0}  \left(||\nabla^{\omega}_{V}(T_1+z\,U_1)||_\l^{\bullet \,2}-{\mathcal V}_\l(T_1+z\,U_1)\right)&=&\langle U_1\mid \nabla^{\omega\,\star}_{V}\left(\nabla^{\omega}_{V}T_1\right)\rangle_\l^\bullet+\langle U_1 \mid {\mathcal V}'_\l(T_1)\,T_1\rangle^\bullet_\l
    \\
    &=&
    \langle U_1\mid \nabla^{\omega\,\star}_{V}\left(\nabla^{\omega}_{V}\,T_1\right)-{\mathcal V}'_\l(T_1)\,T_1 \rangle^\bullet_\l.
\end{eqnarray*}
Finally, since $\langle-|-\rangle^\bullet_\l$ is an inner product, the equation $$0=\left.\dfrac{\partial}{\partial z}\right|_{z=0}  \left(||\nabla^{\omega}_{V}(T_1+z\,U_1)||_\l^{\bullet \,2}-{\mathcal V}_\l(T_1+z\,U_1)\right)$$ holds for all $U_1$ $\in$ $E^V_\l$ if and only if equation (\ref{6.f1.5}) holds. 

A completely analogous calculation shows that the equation $$0=\left.\dfrac{\partial}{\partial z}\right|_{z=0}  \left(-||\widehat{\nabla}^{\omega}_{\overline{V}}(T_2+z\,U_2)||_\r^{\bullet \,2}+{\mathcal V}_\r(T_2+z\,U_2)\right)$$ holds for all $U_2$ $\in$ $E^{\overline{V}}_\r$ if and only if equation (\ref{6.f1.6}) holds.
\end{proof}

We shall refer to equations~(\ref{6.f1.4}), (\ref{6.f1.5}), and (\ref{6.f1.6}) as the \emph{non--commutative geometrical Yang--Mills scalar matter field equations}. According to Appendix C, in the \emph{classical} case, the non--commutative geometrical Yang--Mills scalar matter field equations coincides with the (\emph{dualization via the pull--back} of the) classical ones.

\section{Examples} 

Throughout the paper, we have made several non–trivial assumptions on the theory.

\begin{Remark}
\label{remaconditions}
We shall enumerate all the conditions:
    \begin{enumerate}
    \item A quantum principal $\G$--bundle $\zeta=(P,B,\Delta_P)$, where $B$ stable under holomorphic calculus. 
    \item A bicovariant $\ast$--FODC $(\Gamma,d)$ over $H$ for which the quantum dual Lie algebra $\mathfrak{qg}^\#$ is finite--dimensional. 
    \item A differential calculus on $\zeta$ induced by the preceding bicovariant $\ast$--FODC, such that $\Omega^\bullet(B)$ satisfies Definition~\ref{a.2.1} and admits a left quantum Hodge star operator.
\item (Using Remark \ref{Sadjoint}) For every qpc $\omega$ of $\zeta$, the operator $$d^{S^{\omega}}:=\Upsilon_{\mathfrak{qg}^\#}\circ S^{\omega} \circ \Upsilon^{-1}_{\mathfrak{qg}^\#}$$ is adjointable or formally adjointable with respect to $\langle-|-\rangle^\bullet_\l$ in (see equation (\ref{realtensor})) $$ \Upsilon_{\mathfrak{qg}^\#}(\Mor(\ad,\Delta_\H)^\dagger);$$  while the operator $$ d^{\widehat{S}^{\omega}}:=\widehat{\Upsilon}_{\mathfrak{qg}^\#}\circ \widehat{S}^{\omega}\circ \widehat{\Upsilon}^{-1}_{\mathfrak{qg}^\#}$$ is adjointable or formally adjointable with respect to $\langle-|-\rangle^\bullet_\r$ in $$ \widehat{\Upsilon}_{\mathfrak{qg}^\#}(\Mor(\ad,\Delta_\Hor)^\dagger).$$ 
\end{enumerate}
\end{Remark}

In this way, our theory can be applied in every qpb with a differential calculus for which Remark \ref{remaconditions} holds.

\begin{Remark}
    \label{oneside}
    In many works in non--commutative geometry, the theory is developed only for right modules. In this paper, however, we have developed the theory for both left and right modules. This is because, as the reader may have already noticed, the theories associated with the left and right structures are, in general, not equivalent. For example, the induced qlc's can only be combined into a $B$--bimodule qlc when $\omega$ is regular (see Remark \ref{remacon}). As another example,  in reference \cite{sald3} it is shown that the operators $$ \nabla^{\omega\star}_V\nabla^{\omega}_V, \qquad \widehat{\nabla}^{\omega\star}_V\widehat{\nabla}^{\omega}_V,$$  are different operators that do not commute with each other (in the particular case of the qpb studied in that paper). This fact is strong motivating
reason to consider the left/right structure at the same time: it
appears that in general, ignoring one of the structures leaves
to losing relevant information about the quantum spaces. Nevertheless, it is clear that the entire theory could also be developed by focusing exclusively on either the left or the right structure.
\end{Remark}

Unfortunately, in general, we do not yet know how the presence of the operator $S^{\omega}\neq 0$ in the non--commutative geometrical Bianchi identity, in the non--commutative geometrical Yang--Mills equation and in the non--commutative geometrical Yang--Mills scalar matter equations modifies the \emph{classical} field equations and the physical predictions. At present, we believe that such an analysis can only be carried out by working with a specific quantum principal bundle endowed with a specific differential calculus, since the theory developed in this paper is intentionally formulated in the greatest generality we could achieve. 

More concretely, the operator $S^{\omega}$ strongly depends on the differential calculus on $B$ and on the differential calculus on $H$, and these structures can be chosen almost arbitrarily. Once these structures have been fixed, the operator $S^{\omega}$ also depends on the embedded differential $\Theta$, which in general is not unique. All these aspects introduce complications when trying to determine general information about the non--commutative geometrical field equations. However, at the same time, this freedom in choosing the structures makes the theory richer in terms of examples to study, which is appropriate from a mathematical point of view.

The absence of a general explanation of how the non--commutative geometrical field equations modify the \emph{classical} field equations and the physical predictions is not exclusive to our theory. For example, Connes’ Yang--Mills theory does not provide such analysis either~(\cite{con}). 

In Connes’ Yang--Mills theory, whenever the operator $[\nabla,-]$ admits an adjoint operator (or a formal adjoint operator), it is possible to derive a Yang--Mills equation, i.e., an equation that characterized the critical points of the Yang--Mills--Connes functional~(\cite{conym,conym1}). However, even in this setting, there is no general explanation of how such equation modifies the \emph{classical} one and the physical predictions of the theory.

Notice that, in Connes' Yang--Mills theory, in order to obtain a field equation, it is necessary that the operator $[\nabla,-]$ admits an adjoint; while in our theory, in order to obtain a field equation, it is necessary that the operator $d^{DS^\omega}$ (or $d^{\widehat{DS^\omega}}$) admits an adjoint. In this sense, in reference \cite{saldcon}, we find that the operators  $d^{DS^\omega}$ (and  $d^{\widehat{DS^\omega}}$) and  $[\nabla,-]$ admit formal adjoint operators at the same time for quantum principal $SU(N)$--bundles over non--commutative toric manifolds and hence, this class of qpb's is suitable to study both theories. In reference \cite{saldcon} we also develop a comparison between both theories, highlighting their similarities and differences in a concrete example: quantum principal $U(1)$--bundles over the non--commutative $n$--torus.

In order to keep the length of the paper reasonable, in this section we will present two representative  classes of qpb’s satisfying the conditions of Remark~\ref{remaconditions}. We have chosen to prioritize the presentation of these general two classes of qpb's for which our theory applies rather than carrying out a complete analysis of the solutions of the non--commutative geometrical field equations, the action of the quantum gauge group on their solutions and the corresponding physical predictions in a specific example. 

This is because our intention in this paper is to promote a general theory that can be applied and studied in several interested and useful spaces in physics (as Connes' Yang--Mills theory), and developing a full specific example could give the impression that the theory applies only to that particular case. However, readers interested in such detailed analyses for concrete models are encouraged to consult references \cite{sald3,sald4,sald5,sald6}.

It is worth mentioning that the qpb’s presented in   \cite{saldcon} (i.e., quantum principal $SU(N)$--bundles over non--commutative toric manifolds) constitute another class of qpb’s to which the our theory applies, distinct from the two classes of qpb's presented in this section. Of course, these three classes of qpb's are not the only bundles for which our theory applies. For example, reference \cite{sald3} shows the application of our theory on the quantum Hopf fibration, also known as the $q$--deformed Dirac monopole bundle, and in reference \cite{sald7} (in preparation), we show a study on instantons as solutions of the non--commutative geometrical Yang--Mills equation in the \emph{quantum} version of the \emph{classical} fibration $$\mathbb{S}^3\cong SU(2)\lhook\joinrel\relbar\joinrel\rightarrow \mathbb{S}^7\longrightarrow \mathbb{S}^4,$$ comparing the obtained results with the theory presented in reference \cite{conym1} for the Connes' Yang--Mills theory.

\subsection{Quantum Principal Bundles over Smooth Manifolds}

In this subsection, we will study qpb’s over \emph{classical} spaces (manifolds). Our discussion is based on~\cite{micho1}; therefore, we strongly recommend that the reader consult this reference before proceeding, as we cannot cover all the relevant details here. These qpb’s are particularly important for the purposes of this research because, as in differential geometry, the quantum base space represents the space--time. Consequently, the qpb’s of this section provide models of \emph{non--commutative geometrical} Yang--Mills scalar matter theories on a  \emph{classical} space--time.

In reference \cite{micho1}, a quantum principal $\G$--bundle (with $H^\infty=(H,\cdot,\mathbbm{1},\ast,\Delta,\epsilon,S)$ the associated $\ast$--Hopf algebra  of the quantum group $\G$) is a triple 
\begin{equation}
    \label{clas1}
    \zeta=(P,\iota,\Delta_P)
\end{equation}
where $P$ is a $\ast$--algebra, $$\iota:B\longrightarrow P $$ is a unital linear map,  $B:=C^\infty_\C(M)$ is the algebra of $\C$--valued smooth functions of $M$ with $M$ a  $n$--dimensional oriented closed Riemannian manifold, and $$\Delta_P:P\longrightarrow P\otimes H $$ is a linear map such that  for each $x$ $\in$ $M$, there exists an open set $U\subseteq M$ containing $x$ and a $\ast$--algebra morphism $$\pi_U:P\longrightarrow C^\infty_\C(U)\otimes H $$ such that the following properties hold
\begin{enumerate}
    \item We have $$ \pi_U(\iota(f))=f|_{U}\otimes \mathbbm{1}$$ for every $f$ $\in$ $B$.
    \item If $q=\iota(\varphi)\,p$, with $\varphi$ $\in$ $C^\infty_\C(U_c)$, $p$ $\in$ $P$,  then $$\pi(q)=0\;\;\Longrightarrow \;\; q=0.$$ Here, $C^\infty_\C(U_c)$ denotes the set of $\C$--valued smooth functions of $U$ with compact support.
    \item We have $$(\id_{C^\infty_\C(U)}\otimes \Delta)\circ \pi_U=(\pi_U\otimes \id_H)\circ \Delta_P\quad \mbox{ and }\quad  C^\infty_\C(U_c)\otimes H\subseteq \pi_U(P).$$
\end{enumerate}

In Section 2 of reference \cite{micho1} it is proven that triples as in equation (\ref{clas1}) satisfies the definition of qpb given in Section 2.2. In particular, $\iota$ is a $\ast$--algebra monomorphism and hence, we can consider that $B\subseteq P$. Furthermore, according to Lemma 2.2 of reference \cite{micho1}, $B$ is in the centre of $P$. 

It is worth mentioning that  in Section 2 of reference \cite{micho1}, the author presents a construction of this class of qpb's by means of \emph{classical} cocycles; so it is easy to find concrete examples of this type of qpb's. Furthermore, it is well--known that $B$ is stable under holomorphic calculus (\cite{con}); so we get the first point of Remark \ref{remaconditions} holds.  

Let $x$ $\in$ $M$ and consider $U\subseteq M$ the open set containing $x$ for which the map $\pi_U$ exists. Let $V\subseteq U$ be a nonempty open set. Then, according to Lemma 2.5 of reference \cite{micho1}, we get that $$\pi_U(I_V)\subseteq C^\infty_\C(V_c)\otimes H$$ and the restriction $$\pi_{U}|_{I_V}:I_V\longrightarrow C^\infty_\C(V_c)\otimes H $$ is a $\ast$--algebra isomorphism. In this way, we define the $\ast$--algebra monomorphism $$\psi_U:=(\pi_U|_{I_V})^{-1}: C^\infty_\C(U_c)\otimes H\longrightarrow P.$$

In Section 3 of reference \cite{micho1}, a differential calculus on $\zeta$ is defined by:
\begin{enumerate}
    \item A graded differential $\ast$--algebra $(\Omega^\bullet(P),d,\ast)$ generated by $\Omega^0(P)=P$ ({\it quantum differential forms of} $P$).
    \item A bicovariant $\ast$--FODC $(\Gamma,d)$ over $H$.
    \item The maps $\pi_U$, $\psi_U$ are extending to graded differential $\ast$--algebra morphisms  $$\pi_U:\Omega^\bullet(P)\longrightarrow \Omega^\bullet_\C(U)\otimes \Gamma^\wedge,$$ $$\psi_U:\Omega^\bullet_\C(U_c)\otimes \Gamma^\wedge \longrightarrow \Omega^\bullet(P).$$ Here, we have considered that $\otimes$ is the tensor product of graded differential $\ast$--algebras, $\Omega^\bullet_\C(U)$ is the space of $\C$--valued differential forms of $U$, and $\Omega^\bullet_\C(U_c)$ denotes the space of $\C$--valued differential forms of $U$ with compact support.
\end{enumerate}

In Proposition 3.16 of reference \cite{micho1}, the author proves that up isomorphisms, there exists only one differential calculus on a given $\zeta$. Moreover, the map $\Delta_P$ can be extended to a graded differential $\ast$--algebra morphism $$\Delta_{\Omega^\bullet(P)}:\Omega^\bullet(P)\longrightarrow \Omega^\bullet(P)\otimes \Gamma^\wedge$$ and hence, we have a differential calculus in the sense of Section 2.2. Moreover, in this same proposition, it is proven that the space of base forms is exactly  the de Rham complex
\begin{equation}
    \label{smooth.f1}
    (\Omega^\bullet(B):=\Omega^\bullet_{\C}(M),d,\ast)
\end{equation}
of $\C$--valued differential forms of $M$.

Obviously,  $(\Omega^\bullet(B),d,\ast)$ fulfills the conditions mentioned in Definition \ref{a.2.1}  for any $B$--valued inner product induced by a complex--extension of a Riemannian metric on $M$  and clearly, there exists a Hodge star operator (\cite{nodg,gtvp}). Therefore, to show that this class of qpb's is suitable for our theory, it is sufficient to prove the adjointability (or formal adjointability) of the operators $d^{S^{\omega}_\l}$, $d^{\widehat{S}^{\omega}_\r}$, when the quantum dual Lie algebra is finite--dimensional.

Consider a qpb as in equation (\ref{clas1}) such that $\mathfrak{qg}^\#$ is a finite--dimensional $\C$--vector space. Let $\omega$ be a qpc and consider the operator $$S^{\omega}: \Mor(\ad,\Delta_\Hor)\longrightarrow \Mor(\ad,\Delta_\Hor) $$ given by $$S^{\omega}(\tau):=\langle \omega,\tau\rangle-(-1)^k\langle\tau,\omega\rangle-(-1)^k[\tau,\omega]$$ for every $\tau$ $\in$ $\Mor(\ad,\Delta_\H)$ with $\Im(\tau)$ $\subseteq$ $\Hor^k P$. 

\begin{Proposition}
    \label{ds1}
    The operators $d^{S^\omega}$, $d^{\widehat{S}^\omega}$ are formally adjointable.
\end{Proposition}
\begin{proof}
 The space $\Omega^k(B)$ is a finitely generated projective $B$--bimodule (by the Serre--Swan theorem) with a non--degenerated $B$--valued inner product and hence, in accordance with reference \cite{con} Chapter 6, there exists $s$ $\in$ $\N$ and a self--adjoint matrix $\varsigma_k$ $\in$ $M_s(B)$ such that $$\Omega^k(B)\cong B^s\cdot \varsigma_k$$ as left $B$--modules, and $$\Omega^k(B)\cong \varsigma^T_k\cdot B^s$$ as right $B$--modules, where $\varsigma^T_k$ is the transpose of $\varsigma_k$.

Since $E^{\mathfrak{qg}^\#}_\l\cong B^d \cdot \rho^{\mathfrak{qg}^\#}(\mathbbm{1})$ for some $d$ $\in$ $\N$ (as left $B$--modules, see equation (\ref{new.5})), we obtain that $$\Omega^k(B)\otimes_B E^{\mathfrak{qg}^\#}_\l\cong B^{sd}\cdot (\varsigma_k\otimes \rho^{\mathfrak{qg}^\#}(\mathbbm{1}))$$ and hence $\Omega^k(B)\otimes_B E^{\mathfrak{qg}^\#}_\l$  is also a finitely generated projective left $B$--module. Here, the element $\varsigma_k\otimes \rho^{\mathfrak{qg}^\#}(\mathbbm{1})$ is the Kronecker product of $\varsigma_k$ and $\rho^{\mathfrak{qg}^\#}(\mathbbm{1})$. Notice $$(\varsigma_k\otimes \rho^{\mathfrak{qg}^\#}(\mathbbm{1}))^\dagger=\varsigma_k\otimes \rho^{\mathfrak{qg}^\#}(\mathbbm{1}),$$ where $\dagger$ denotes the composition of the usual matrix transposition and the $\ast$ operator of $B$.

Therefore, by Proposition \ref{a.6} of Appendix B, every left $B$--module endomorphism of the space $\Omega^\bullet(B)\otimes_B E^{\mathfrak{qg}^\#}_\l$ is $B$--adjointable with respect to the Hermitian structure induced by $(-,-)^{sd}_\l$ (see Appendix B); which is, by construction, exactly the $B$--valued inner product 
\begin{equation*}
\begin{aligned}
\langle-,-\rangle^\bullet_\l : \Omega^\bullet(B)\otimes_B E^{\mathfrak{qg}^\#}_\l \times \Omega^\bullet(B)\otimes_B E^{\mathfrak{qg}^\#}_\l  &\longrightarrow B
\end{aligned}
\end{equation*}
of equation (\ref{ashcas}).

Since $B$ is in the centre of $\Omega^\bullet(P)$, it follows that 
\begin{eqnarray*}
    d^{S^{\omega}}=\Upsilon_{\mathfrak{qg}^\#} \circ S^\omega \circ \Upsilon^{-1}_{\mathfrak{qg}^\#}
    = \Upsilon_{\mathfrak{qg}^\#} \circ (\langle \omega,-\rangle-(-1)^k\langle-,\omega\rangle-(-1)^k[-,\omega]) \circ \Upsilon^{-1}_{\mathfrak{qg}^\#}
\end{eqnarray*}
is a left $B$--linear endomorphism of $\Omega^\bullet(B)\otimes_B E^{\mathfrak{qg}^\#}_\l,$ and  consequently, it is formally adjointable with respect to $\langle-|-\rangle^\bullet_\l.$

A similar argument using equation (\ref{new.6})  and Proposition \ref{a.7} of Appendix B proves that 
\begin{eqnarray*}
d^{\widehat{S}^{\omega}}=\widehat{\Upsilon}_{\mathfrak{qg}^\#} \circ \widehat{S}^\omega \circ \widehat{\Upsilon}^{-1}_{\mathfrak{qg}^\#}
= 
\widehat{\Upsilon}_{\mathfrak{qg}^\#} \circ \wedge \circ (\langle \omega,-\rangle-(-1)^k\langle-,\omega\rangle-(-1)^k[-,\omega]) \circ \wedge \circ \widehat{\Upsilon}^{-1}_{\mathfrak{qg}^\#}
\end{eqnarray*}
is a right $B$--module endomorphism of  $E^{\mathfrak{qg}^\#}_\r \otimes_B \Omega^\bullet(B)$ and therefore, it is formally adjointable with respect to $\langle-|-\rangle^\bullet_\r.$
\end{proof}

In this way, we conclude that our theory can be applied on this class of quantum principal $\G$--bundles, and it is worth noting that in principle, any quantum group $\G$ with any differential calculus on $H$ can be used, provided that $\mathfrak{qg}^\#$ is finite--dimensional.

In~\cite{sald6}, the reader can find a concrete example of this class of qpb’s for the quantum group associated with the Lie group $SU(2)$; but with a $4$--dimensional differential calculus. 

In concrete, by using a quantum principal $SU(2)$--bundle equipped with a $4$--dimensional differential calculus on $SU(2)$, the non--commutative geometrical Yang--Mills equation coincide with the \emph{classical} Yang--Mills equation of the electroweak theory in physics, while the non--commutative geometrical Yang--Mills--scalar--matter equations coincide with the corresponding \emph{classical} Yang--Mills--scalar--matter equations of the electroweak theory. In particular, we show that the \emph{classical} $(SU(2)\times U(1))$--symmetries of the electroweak theory are also symmetries of the non--commutative geometrical model, that is, if $\mathfrak{GG}$ denotes the gauge group of the electroweak theory, then $$\mathfrak{GG}\subseteq \qGG_\YM\qquad \mbox{ and }\qquad \mathfrak{GG}\subseteq \qGG_\YMSM.$$ Moreover, we show that the Higgs mechanism can also be described within this framework.

The study of $SU(N)$--gauge theories for $N\geq 3$ over \emph{classical} spaces by replacing the standard differential calculus on $SU(N)$ with a non--commutative one, and its relation with sectors of the Standard Model of Elementary Particles will be explored in forthcoming publications. 

\subsection{Trivial Quantum Principal Bundles over the Moyal--Weyl Algebra}

 In this subsection we are going to illustrate our theory in trivial quantum principal bundles, in the sense of references \cite{micho2,stheve}, in a concrete non--commutative $\ast$--algebra $B$. From a physical point of view, the qpb’s of this section provide models of \emph{non--commutative geometrical} Yang--Mills scalar matter theories on the \emph{quantum} space--time $B$.

Before proceeding we have to talk about the first point of Remark \ref{remaconditions}. We have assumed that the quantum base space $B$ is stable under holomorphic calculus  only to guarantee that equation (\ref{generators}) holds (\cite{micho3}), and to guarantee that $$b\,b^\ast=0\;\;\Longrightarrow \;\; b=0$$ for all $b$ $\in$ $B$. The last condition implies the positive--definiteness of the $B$--valued inner products $(-,-)_\l$, $(-,-)_\r$. However, for trivial qpb's, it is not necessary to make any assumption on $B$ to ensure the existence of the generators in equation (\ref{generators}), as the reader can check in the proof of Proposition $5.1$ of reference \cite{sald2}. Therefore, to apply our theory in trivial qpb's, it is sufficient to work with a quantum base space $B$ for which $b\,b^\ast=0\;\;\Longrightarrow \;\; b=0$ for all $b$ $\in$ $B$. This is the case, for example, of the Moyal--Weyl algebra~(\cite{05}).

According to references \cite{micho2,stheve}, a quantum principal $\G$--bundle (with $H^\infty=(H,\cdot,\mathbbm{1},\ast,\Delta,\epsilon,S)$ the associated $\ast$--Hopf algebra  of the quantum group $\G$) of the form $$\zeta_\triv=(P:=B\otimes H,B,\Delta_P:=\id_B\otimes \Delta)$$
is called {\it trivial}. If a differential calculus on $\zeta_\triv$ is given by (using the corresponding tensor products) $$\Omega^\bullet(P):=\Omega^\bullet(B)\otimes \Gamma^\wedge\quad  \mbox{ and }\quad \Delta_{\Omega^\bullet(P)}:=\id_{\Omega^\bullet(B)}\otimes \Delta, $$ where $\Omega^\bullet(B)$ is some graded differential $\ast$--algebra generated by $\Omega^0(B)=B$ and $\Gamma^\wedge$ is the universal differential envelope $\ast$--calculus of some bicovariant $\ast$--FODC of $H$, then the differential calculus is commonly referred to as \emph{trivial}.

Let us take any trivial qpb $$\zeta_\triv=(P,B,\Delta)$$ with a trivial differential calculus. Then, the linear map
\begin{equation}
\label{5.f1.1}
\omega^\triv:\mathfrak{qg}^\# \longrightarrow \Omega^1(P),\qquad
\theta \longmapsto \mathbbm{1}\otimes \theta
\end{equation}
is a regular qpc (see Lemma $6.11$ of reference \cite{micho2}). According to Lemma $6.10$ of reference \cite{micho2}, there is a bijection between 
$$\mathrm{Hom}(\mathfrak{qg}^\#,\Omega^\bullet(B))=\{ L^\tau:\mathfrak{qg}^\#\longrightarrow \Omega^\bullet(B)\mid  L^\tau \mbox{ is linear} \} $$ and $\Mor(\ad,\Delta_\Hor)$. This bijection is given by 
\begin{equation}
    \label{trv.f1}
    \tau=(L^\tau\otimes\id_H)\circ \ad
\end{equation}
for every $\tau$ $\in$ $\Mor(\ad,\Delta_\Hor)$. Also, by Lemma $6.11$ of reference \cite{micho2}, there is a bijection between
\begin{eqnarray*}
    \mathrm{Hom}(\mathfrak{qg}^\#,\Omega^1(B))^\dagger=\{ A^\omega:\mathfrak{qg}^\#\longrightarrow \Omega^1(B)\mid   A^\omega \mbox{ is linear and} &\ast\circ A^\omega=A^\omega\circ \ast\}
\end{eqnarray*}
and the set of all qpc's of $\zeta_\triv$. This bijection is given by 
\begin{equation}
\label{trv.f2}
\omega=(A^\omega\otimes \id_H)\circ \ad+\omega^\triv.
\end{equation}

The linear map $A^\omega:\mathfrak{qg}^\#\longrightarrow \Omega^1(B)$ can be interpreted as the {\it non--commutative gauge potential} of $\omega$. The bijection of equation (\ref{trv.f2}) extends naturally to the curvature by  (see Lemma $6.12$ of reference \cite{micho2}) 
\begin{equation}
    \label{trv.f3}
    R^{\omega}=(F^\omega\otimes \id_H)\circ \ad,
\end{equation}
where $F^\omega:\mathfrak{qg}^\#\longrightarrow \Omega^2(B)$ is the linear map defined as 
\begin{equation}
    \label{trv.f3.1}
    F^\omega:=dA^\omega-\langle A^\omega,A^\omega\rangle.
\end{equation}
The map $F^\omega$ can be interpreted as the {\it non--commutative field strength} of $\omega$. Notice that $F^{\omega^\triv}=0$ (because $A^{\omega^\triv}=0$) and it follows that $\omega^\triv$ is flat. 

Furthermore, Lemma $6.12$ of reference \cite{micho2} shows that 
\begin{equation}
    \label{trv.f4.1}
    D^\omega(\tau) = (D^\omega_B(L^\tau)\otimes \id_H)\circ \ad \qquad \mbox{ with }\qquad D^\omega_B(L^\tau):=dL^\tau-(-1)^{k}[L^\tau,A^\omega],
\end{equation}
for every $\tau$ $\in$ $\Mor(\ad,\Delta_\Hor)$ such that $\Im(\tau)\subseteq \Hor^k P$, where $[-,-]$ is defined in equation (\ref{trans}). In other words, $D^\omega_B(L^\tau)$ is the element of $\mathrm{Hom}(\mathfrak{qg}^\#,\Omega^1(B))$ associated with $D^\omega(\tau)$ $\in$ $\Mor(\ad,\Delta_\Hor)$ under the bijection of equation (\ref{trv.f1}). In the same way, we get that
\begin{equation}
    \label{lonecesito23}
     \widehat{D}^\omega(\tau) = (\widehat{D}^\omega_B(L^\tau)\otimes \id_H)\circ \ad \qquad \mbox{ with }\qquad \widehat{D}^\omega_B(L^\tau):=(\wedge\circ D^\omega_B\circ \wedge)(L^\tau),
\end{equation}
for every $\tau$ $\in$ $\Mor(\ad,\Delta_\Hor)$ such that $\Im(\tau)\subseteq \Hor^k P$.

\begin{Proposition}
    \label{prop.trv}
    For every $\tau$ $\in$ $\Mor(\ad,\Delta_\Hor)$ such that $\Im(\tau)\subseteq \Hor^k P$, the map 
    $$S^\omega_B(L^\tau):=\langle A^\omega,L^\tau\rangle-(-1)^k\langle L^\tau,A^\omega\rangle-(-1)^k[L^\tau,A^\omega]$$ is the element of $\mathrm{Hom}(\mathfrak{qg}^\#,\Omega^1(B))$ associated with $S^\omega(\tau)$ $\in$ $\Mor(\ad,\Delta_\Hor)$ under the bijection of equation (\ref{trv.f1}). In other words, we have 
     $$S^\omega(\tau)=(S^\omega_B(L^\tau)\otimes\id_H)\circ \ad.$$ 
\end{Proposition}

\begin{proof}
  Let $\theta$ $\in$ $\mathfrak{qg}^\#$ and  $\Theta(\theta)=\displaystyle \sum^m_{i,j=1}\theta_i\otimes \theta'_j$ for some $\theta_i$, $\theta'_j$ $\in$ $\mathfrak{qg}^\#$. Then 
  \begin{eqnarray*}
 \langle \omega,\tau\rangle (\theta)=
      \sum^m_{i,j=1} \omega(\theta_i)\,\tau(\theta'_j)
      &=&\sum^m_{i,j=1} (A^\omega(\theta^{(0)}_i)\otimes \theta^{(1)}_i + \omega^\triv(\theta_i))\cdot (L^\tau(\theta'^{(0)}_j)\otimes \theta'^{(1)}_j) 
      \\
      &=&
\sum^m_{i,j=1} A^\omega(\theta^{(0)}_i)L^\tau(\theta'^{(0)}_j)\otimes \theta^{(1)}_i\theta'^{(1)}_j+\omega^\triv(\theta_i)\, \tau(\theta'_j).     
  \end{eqnarray*} 
 Notice that $\displaystyle \sum^m_{i,j=1}\omega^\triv(\theta_i)\, \tau(\theta'_j)=\langle \omega^\triv,\tau\rangle(\theta).$ Since $\Theta$ $\in$ $\Mor(\ad,\ad\otimes \ad)$, it follows that
\begin{eqnarray*}
    \sum^m_{i,j=1} A^\omega(\theta^{(0)}_i)L^\tau(\theta'^{(0)}_j)\otimes \theta^{(1)}_i\theta'^{(1)}_j&=& ((m\circ (A^\omega\otimes L^\tau))\otimes \id_H) \ad^{\otimes 2}(\Theta(\theta))
    \\
    &=&
    ((m\circ (A^\omega\otimes L^\tau)\circ \Theta)\otimes \id_H)\ad(\theta)
    \\
    &=&
    (\langle A^\omega,L^\tau\rangle \otimes \id_H)\ad(\theta),
\end{eqnarray*}
 where $m:\Omega^\bullet(B)\otimes \Omega^\bullet(B)\longrightarrow \Omega^\bullet(B)$ is the product map; so $$\langle \omega,\tau\rangle=(\langle A^\omega,L^\tau\rangle \otimes \id_H)\circ \ad + \langle \omega^\triv,\tau\rangle.$$
 Similarly, we have
 $$\langle \tau,\omega\rangle=(\langle L^\tau,A^\omega\rangle \otimes \id_H)\circ \ad + \langle \tau,\omega^\triv\rangle \quad \mbox{ and }\quad [\tau,\omega ]=([L^\tau,A^\omega] \otimes \id_H)\circ \ad + [\tau,\omega^\triv].$$ Thus
 \begin{eqnarray*}
S^\omega(\tau) &=&((\langle A^\omega,L^\tau\rangle-(-1)^k\langle L^\tau,A^\omega\rangle-(-1)^k[L^\tau,A^\omega])\otimes \id_H)\circ \ad +
     S^{\omega^\triv}(\tau)
     \\
     &=&
     (S^\omega_B(L^\tau)\otimes \id_H)\circ \ad+S^{\omega^\triv}(\tau)
 \end{eqnarray*}
 However, $S^{\omega^\triv}(\tau)=0$ because $\omega^\triv$ is regular.
\end{proof}

Of course, the following proposition can be proven in a similar way
\begin{Proposition}
    \label{prop.trv.1}
    For every $\tau$ $\in$ $\Mor(\ad,\Delta_\Hor)$ such that $\Im(\tau)\subseteq \Hor^k P$, the map 
    $$\widehat{S}^\omega_B(L^\tau):= (\wedge \circ S^{\omega}_B \circ \wedge)(L^\tau)$$ is the element of $\mathrm{Hom}(\mathfrak{qg}^\#,\Omega^1(B))$ associated with $\widehat{S}^\omega(\tau)$ $\in$ $\Mor(\ad,\Delta_\Hor)$ under the bijection of equation (\ref{trv.f1}). In other words, we have 
     $$\widehat{S}^\omega(\tau)=(\widehat{S}^\omega_B(L^\omega)\otimes\id_H)\circ \ad.$$ 
\end{Proposition}

It follows that
 
\begin{Corollary}
\label{coro1}
    For every $\tau$ $\in$ $\Mor(\ad,\Delta_\Hor)$ such that $\Im(\tau)\subseteq \Hor^k P$, the twisted covariant derivatives are expressed as
    $$DS^\omega(\tau)=(DS^\omega_B(L^\tau)\otimes \id_H)\circ \ad \qquad \mbox{ with }\qquad DS^\omega_B(L^\tau):=D^\omega_B(L^\tau)-S^\omega_B(L^\tau)$$ and
    $$\widehat{DS^\omega}(\tau)=(\widehat{DS^\omega}_B(L^\tau)\otimes \id_H)\circ \ad \qquad \mbox{ with }\qquad \widehat{DS^\omega}_B(L^\tau):=\widehat{D}^\omega_B(L^\tau)-\widehat{S}^\omega_B(L^\tau).$$
\end{Corollary}

Consider $\R^4$ and its space of $\C$--valued smooth functions $C^\infty_\C(\R^4)$. By choosing a $4\times 4$ antisymmetric matrix $(\Lambda^{\mu\nu})$ $\in$ $M_4(\R)$, we define a new product: for every $f$, $h$ $\in$ $C^\infty_\C(\R^4)$ 
\begin{equation}
    \label{trv.f4}
    f\cdot h:=m'\circ \mathrm{exp}\left( \frac{i\,\Lambda^{\mu\nu}}{2} \frac{\partial}{\partial x^\mu}\otimes \frac{\partial}{\partial x^\nu} \right)(f\otimes h),
\end{equation}
where $m'$ denotes the usual product on $C^\infty_\C(\R^4)$ and we have used Einstein summation convention with $i=\sqrt{-1}$. With this new product, we get a  non--commutative unital $\ast$--algebra $B$ (\cite{05}), where the unit is $\mathbbm{1}(x)=1$ for all $x$ $\in$ $\R^4$ and the $\ast$ operation is the complex conjugate. The $\ast$--algebra $B$ is given the name of Moyal--Weyl algebra (\cite{05}).

 Equation (\ref{trv.f4}) is easily extended to $\Omega^\bullet_\C(\R^4)$, the space of $\C$--valued differential forms of $\R^4$. Indeed, we define the operator $$\frac{\partial}{\partial x^\gamma}: \Omega^\bullet_\C(\R^4)\longrightarrow \Omega^\bullet_\C(\R^4)$$ given by  $$\frac{\partial}{\partial x^\gamma}(\mu):=\mathcal{L}_{ \frac{\partial}{\partial x^\gamma}}(\mu),$$ where $\mathcal{L}_{ \frac{\partial}{\partial x^\gamma}}$ is the Lie derivative with respect to $\displaystyle \frac{\partial}{\partial x^\gamma}$. In this way, equation (\ref{trv.f4}) can be extended to $\C$--valued differential forms of $\R^4$. For example, since  $$\displaystyle \mathcal{L}_{\frac{\partial }{\partial x^\mu}\frac{\partial }{\partial x^\gamma}}:=\mathcal{L}_{\frac{\partial }{\partial x^\mu}}\circ \mathcal{L}_{\frac{\partial }{\partial x^\gamma}}\qquad  \mbox{ and  }\qquad \mathcal{L}_{ \frac{\partial}{\partial x^\gamma}} dx^{\mu}=0,$$ it follows that 
\begin{equation}
\label{trv.f5}
dx^\mu\cdot f=f\cdot dx^\mu, \qquad dx^\mu\wedge dx^\nu=-dx^\nu \wedge  dx^\mu 
\end{equation}
for every $f$ $\in$ $B$ and $\mu$, $\nu=0,1,2,3$. With this new product of differential forms, we get a non--commutative graded differential $\ast$--algebra (\cite{05}), which we will denote by $\Omega^\bullet(B)$. 

For a given quantum group $\G$ with its associated $\ast$--Hopf algebra $H^\infty=(H,\cdot,\mathbbm{1},\Delta,S,\epsilon,\ast)$, let $$\zeta_{\triv}$$ be a trivial quantum principal $\G$--bundle over the Moyal--Weyl algebra $B$. Let $(\Gamma,d)$ be a bicovariant $\ast$--FODC over $H$ such that $\mathfrak{qg}^\#$ is a finite--dimensional $\C$--vector space, and consider the corresponding trivial differential calculus $$\Omega^\bullet(P)=\Omega^\bullet(B)\otimes \Gamma^\wedge\quad  \mbox{ and }\quad \Delta_{\Omega^\bullet(P)}=\id_{\Omega^\bullet(B)}\otimes \Delta $$ on $\zeta_{\triv}$.

Let us define the following left quantum Riemmanian metric on $B$
\begin{equation}
\label{trv.f6}
\{\langle-,-\rangle^k_\l: \Omega^k(B)\times \Omega^k(B) \longrightarrow B\mid k=0,1,2,3,4\}
\end{equation}
such that $\langle f ,h\rangle^0_\l= f\cdot h^\ast$ and for $k=1,2,3,4$ it is the usual euclidean metric of the de Rham differential algebra of $\R^4$. For example $$\displaystyle \langle \sum^3_{\mu=0}f_\mu\,dx^\mu ,\sum^3_{\nu=0}h_\nu\,dx^\nu\rangle^1_\l= \sum^3_{\mu,\nu=0} f_\mu\cdot h^\ast_\nu$$ and $$\langle f\,\dvol,h\,\dvol\rangle^4_\l=f\cdot h^\ast,$$ where 
\begin{equation}
    \label{trv.f7}
    \dvol:=dx^0\wedge dx^1\wedge dx^2\wedge dx^3
\end{equation}
is the quantum $4$--volume form. Furthermore, there is a left quantum Hodge operator which is defined as in differential geometry for $\R^4$ with the Euclidean metric (\cite{gtvp}). For example
\begin{equation*}
    \label{trv.f8}
    \begin{aligned}
    \star_\l dx^0=dx^1\wedge dx^2\wedge dx^3, \quad \star_\l dx^1=-dx^0\wedge dx^2\wedge dx^3,\\
    \star_\l dx^2=dx^0\wedge dx^1\wedge dx^3,\quad \star_\l dx^3=-dx^0\wedge dx^1\wedge dx^2
    \end{aligned}
\end{equation*}
and
\begin{equation*}
    \begin{aligned}
    \star_\l dx^0 \wedge dx^1=dx^2\wedge dx^3, \quad \star_\l dx^0 \wedge dx^2=-dx^1\wedge dx^3, \\ \star_\l dx^0 \wedge dx^3=dx^1\wedge dx^2,\quad
    \star_\l dx^1 \wedge dx^2=dx^0\wedge dx^3,\\ \quad \star_\l dx^1 \wedge dx^3=-dx^0\wedge dx^2, \quad \star_\l dx^2 \wedge dx^3=dx^0\wedge dx^1.
    \end{aligned}
\end{equation*}
For the quantum integral, we are going to take the usual integration on $\Omega^4(B)$. As in differential geometry, the quantum integral is defined only for measurable elements.

From now on and as in differential geometry, we will restrict ourselves to $\Omega^\bullet(B)'$, the space of differential forms with coefficient--functions in the complex Schwartz space of $\R^4$. This assumption is to guarantee that $d\mu$, $d^{\star_\l}\mu$, $\mu\cdot \eta$ are integrable provide that $\mu$ and $\eta$ are integrable. 

In this sense, we restrict our attention to quantum principal connections $\omega$ for which $
\Im(A^\omega)\subseteq \Omega^1(B)'.$  This condition is easily satisfied, since $A^\omega$ is a linear map with finite--dimensional domain. Then, by equation~(\ref{trv.f3.1}), it follows that $\Im(F^\omega)\subseteq \Omega^2(B)'.$  As in differential geometry, this restriction is necessary because not every element of $\Omega^4(B)$ is integrable and because of the physical requirement that all fields must vanish at infinity. Consequently, to show that this class of quantum principal bundles is suitable for our theory, it suffices to prove the adjointability of the operators $d^{S^{\omega}}$ and $d^{\widehat{S}^{\omega}}$.

Consider the left quantum Hodge inner product $\langle-|-\rangle_\l$. Let $$\eta=f\, dx^\mu\; \in\; \Omega^1(B)',\quad \sigma= h\,dx^{j_1}\wedge\cdots \wedge dx^{j_k}\; \in\; \Omega^k(B)'$$ and  $$0\not=\kappa=l\,dx^\mu \wedge dx^{j_1}\wedge\cdots \wedge dx^{j_k}\; \in\; \Omega^{k+1}(B)'$$ for a fixed $\mu$ $\in$ $\{0,1,2,3\}$ and  $0\leq j_1<\cdots <j_k\leq 3$. Then $\eta\cdot\sigma=f\cdot h\;dx^\mu\wedge dx^{j_1}\wedge\cdots \wedge dx^{j_k}$; so we have $$\langle \eta\cdot \sigma\mid \kappa\rangle_\l = \int_{\R^4}  f\,h\, l^\ast\,\dvol= \int_{\R^4}  h\,(f^\ast\,l)^\ast\,\dvol=\langle \sigma\mid \alpha_\eta(\kappa)\rangle_\l,$$ where $\alpha_\eta(\kappa)= f^\ast\cdot l\,dx^{j_1}\wedge\cdots \wedge dx^{j_k}$. Similarly, we have $$\langle \sigma\cdot\eta\mid \kappa\rangle_\l = \int_{\R^4} h\,f\,l^\ast\,\dvol=\int_{\R^4} h\,(l^\ast\,f)^\ast\,\dvol=\langle \sigma\mid \beta_\eta(\kappa)\rangle_\l,$$ where $\beta_\eta(\kappa)=l^\ast\cdot f\,dx^{j_1}\wedge\cdots \wedge dx^{j_k}$. By linearity, we can conclude that for every $\eta$ $\in$ $\Omega^1(B)'$, $\sigma$ $\in$ $\Omega^k(B)'$, $\kappa$ $\in$ $\Omega^{k+1}(B)'$, there exist $$\alpha_\eta(\kappa),\;  \beta_\eta(\kappa)  \;\in\; \Omega^{k}(B)'$$ such that
\begin{equation}
    \label{conmut}
    \langle\eta\cdot \sigma\mid \kappa\rangle_\l=\langle \sigma\mid \alpha_\eta(\kappa)\rangle_\l,\qquad \langle\sigma\cdot \eta\mid\kappa\rangle_\l=\langle \sigma\mid \beta_\eta(\kappa)\rangle_\l.
\end{equation}

\begin{Proposition}
        \label{ds2}
    The operators $d^{S^\omega}$, $d^{\widehat{S}^\omega}$ are formally adjointable.
\end{Proposition}
\begin{proof}
Consider the decomposition $$\ad\cong \bigoplus^u_{m=1}\delta^{V_m}$$ for some $u$ $\in$ $\N$, with $\delta^{V_m}$ $\in$ $\T$. Since this decomposition is also orthonormal (\cite{woro1}), if $\beta_m=\{ \theta_{m_i}\}^{n_{m}}_{m_i=1}$ is an orthonormal linear basis of $V_m$, then $$\beta=\beta_{m_1}+\cdots + \beta_{m_u}=\{\theta_1,...,\theta_n \}$$ is an orthonormal linear basis of $\mathfrak{qg}^\#$ and by Proposition $5.1$ of reference \cite{sald2}, the linear maps $\{ T^\l_k:\mathfrak{qg}^\# \longrightarrow B\otimes H \}$ given by
$$T^\l_k(\theta_{m_i})=\mathbbm{1}\otimes g^{V_m}_{k\,m_i} $$ (where the elements $\{ g^{V_m}_{k\,m_i}\}^{n_{m}}_{k,m_i=1}$ are the ones of Theorem \ref{rep} for $\delta^{V_m}$) form an orthonormal $B$--basis of the associated qvb $E^{\mathfrak{qg}^\#}_\l$, with respect to $(-,-)_\l$. Thus, by Proposition \ref{prop.trv} it follows that $$d^{S^\omega}\varphi=\sum_{j} S^\omega_B(L^\varphi)(\theta_j)\otimes_B T^\l_j$$ for every $\varphi=\displaystyle\sum_{j}\sigma_j\otimes_B T^\l_j$  $\in$ $\Omega^k(B)'\otimes_B E^{\mathfrak{qg}^\#}_\l$, where the linear map $$L^\varphi:\mathfrak{qg}^\#\longrightarrow \Omega^k(B)'$$ is the one associated with the element  $\displaystyle\sum_{j}\sigma_j\, T^\l_j$ $\in$ $\Mor(\ad,\Delta_\Hor)$ under the bijection of equation (\ref{trv.f1}). Since 
$$\Theta(\theta_j)=\sum_{i,m}a^{im}_j\,\theta_i\otimes \theta_m,\qquad \mathrm{c}^T(\theta_j)=\sum_{i,m}b^{im}_j\,\theta_i\otimes \theta_m $$ with $a^{im}_j,$ $b^{im}_j$ $\in$ $\C$, we obtain 
\begin{eqnarray*}
    S^\omega_B(L^\varphi)(\theta_j)&=& (\langle A^\omega,L^\varphi\rangle-(-1)^k\langle L^\varphi,A^\omega\rangle-(-1)^k[L^\varphi,A^\omega])(\theta_j) 
    \\
    &=&
    \sum_{i,m}a^{im}_j A^\omega(\theta_i)\cdot \sigma_m-(-1)^{k}(a^{mi}_j+b^{mi}_j)\sigma_i\cdot A^\omega(\theta_m)
    \\
    &=&
    \sum_m r^m_j\cdot \sigma_m-(-1)^{k}\sigma_m \cdot s^m_j,
\end{eqnarray*}
where $$r^m_j=\displaystyle \sum_i a^{im}_j A^\omega(\theta_i),\;\; s^m_j=\displaystyle \sum_i(a^{mi}_j+b^{mi}_j)A^\omega(\theta_i)\;\in\;\Omega^1(B)'.$$ So $$d^{S^\omega}\varphi=\sum_{j}\left( \sum_m r^m_j\cdot \sigma_m-(-1)^{k}\sigma_m \cdot s^m_j\right)\otimes_B T^\l_j.$$

In this way, for every $\psi=\displaystyle\sum_{j}\kappa_j\otimes_B T^\l_j$ $\in$ $\Omega^{k+1}(B)'\otimes_B E^{\mathfrak{qg}^\#}_\l$, we get

\begin{eqnarray*}
\langle  d^{S^\omega}\varphi  \mid  \psi \rangle^\bullet_\l=\sum_{j}\langle S^\omega_B(L^\varphi)(\theta_j)\mid \kappa_j\rangle_\l
&=&\sum_{m,j} \langle r^m_j\cdot \sigma_m-(-1)^{k}\sigma_m \cdot s^m_j\mid \kappa_j\rangle_\l
    \\
    &=& 
    \sum_{m,j} \langle \sigma_m\mid \alpha_{r^m_j}(\kappa_j)-(-1)^{k}\beta_{s^m_j}(\kappa_j) \rangle_\l
    \\
    &=&
    \sum_{m}\langle \sigma_m\mid S^{\omega\star}_B(L^\psi)(\theta_m)  \rangle_\l
    \\
    &=& 
    \langle  \varphi  \mid  d^{S^\omega\star}\psi \rangle^\bullet_\l,
\end{eqnarray*}
where the linear map $$L^\psi:\mathfrak{qg}^\#\longrightarrow \Omega^{k+1}(B)'$$ is the one associated with the element  $\displaystyle\sum_{j}\kappa_j\, T^\l_j$ $\in$ $\Mor(\ad,\Delta_\Hor)$ under the bijection of equation (\ref{trv.f1}); and
$S^{\omega\star}_B(L^\psi)$ is the element of $\mathrm{Hom}(\mathfrak{qg}^\#,\Omega^1(B))$ given by $$S^{\omega\star}_B(L^\psi)(\theta_m):= \sum_j \alpha_{r^m_j}(\kappa_j)-(-1)^{k}\beta_{s^m_j}(\kappa_j).$$ Finally, the operator $d^{S^\omega\star}$ is the formal adjoint operator of $d^{S^\omega}$ and is defined by 
\begin{equation}
    \label{sop1}
    d^{S^\omega\star}\psi=\sum_m  S^{\omega\star}_B(L^\psi)(\theta_m) \otimes_B T^\l_m.
\end{equation}
Of course, the linear map $S^{\omega\star}_B(L^\psi)$ $\in$ $\mathrm{Hom}(\mathfrak{qg}^\#,\Omega^1(B))$ is the one associated with the element $\Upsilon^{-1}_{\mathfrak{qg}^\#}(d^{S^\omega\star}(\psi))$ $\in$ $\Mor(\ad, \Delta_\Hor)$ under the bijection of equation (\ref{trv.f1}).

In a completely analogous way, it can be proven that $d^{\widehat{S}^\omega}$ is formally adjointable with respect to $\langle-|-\rangle^\bullet_\r$. We will denote this operator as
\begin{equation}
    \label{sop2}
    d^{\widehat{S}^\omega\star}\phi=\sum_m T^\r_m   \otimes_B \widehat{S}^{\omega\star}_B(L^\phi)(\theta_m),
\end{equation}
with $\phi=\displaystyle \sum_j T^\r_j\otimes_B \upsilon_j$ $\in$ $E^{\mathfrak{qg}^\#}_\r\otimes_B \Omega^{k+1}(B)'$ and $\{T^\r_m\}$ the right $B$--generators of $\Mor(\ad,\Delta_P)$.
\end{proof}

According to Theorem \ref{4.2.15} and equations (\ref{trv.f4.1}), (\ref{lonecesito23}), it follows that
\begin{equation}
    \label{diff1}
    d^{\nabla^\omega_{\mathfrak{qg^\#}}\star}\psi= \sum_m D^{\omega\star}_B(L^\psi)(\theta_m)\otimes_B T^\l_m, \qquad d^{\widehat{\nabla}^\omega_{\mathfrak{qg^\#}}\star}\phi=\sum_m T^\r_m\otimes_B \widehat{D}^{\omega\star}_B(L^\phi)(\theta_m),
\end{equation}
with $$D^{\omega\star}_B(L^\psi):=(-1)^{k+1}(\star^{-1}_\l\circ \ast \circ D^\omega_B\circ \ast \circ \star_\l)(L^\psi), \qquad \widehat{D}^{\omega\star}_B(L^\phi):=(-1)^{k+1}(\star^{-1}_\r\circ \widehat{D}^\omega_B\circ \star_\r)(L^\phi).$$

\begin{Corollary}
    \label{coro2}
    The operators $d^{DS^\omega\star}$, $d^{\widehat{DS^\omega}\star}$ are given by $$d^{DS^\omega\star}\psi= \sum_m DS^{\omega\star}_B(L^\psi)(\theta_m)\otimes_B T^\l_m, \qquad d^{\widehat{DS^\omega}\star}\phi=\sum_mT^\r_m\otimes_B \widehat{DS}^{\omega \star}_B(L^\phi)(\theta_m),$$ where $$ DS^{\omega\star}_B(L^\psi):=D^{\omega\star}_B(L^\psi)-S^{\omega\star}_B(L^\psi),\qquad \widehat{DS}^{\omega\star}_B(L^\phi):=\widehat{D}^{\omega\star}_B(L^\phi)-\widehat{S}^{\omega\star}_B(L^\phi).$$
\end{Corollary}

In summary, Proposition \ref{ds2} (and hence, Corollary \ref{coro2}) shows that our theory can be applied on this class of quantum principal $\G$--bundles and it is worth noting that any quantum group $\G$ can be used, and any $\ast$--FODC on $H$ can also be used, provided that $\mathfrak{qg}^{\#}$ is finite--dimensional. 

In reference \cite{sald5}, the reader can find a concrete example of this class of qpb's for the canonical quantum group associated with the Lie group $U(1)$. In other words, the study presented in reference \cite{sald5} is a study of the electromagnetic theory on the Moyal--Weyl algebra.

An interesting result of the theory is the fact that, for an electromagnetic field in vacuum (without any other fields), the presence of the operator $S^\omega$ in both the non--commutative geometrical Bianchi identity and the non--commutative geometrical Yang--Mills equation implies that {\it the non--commutative Maxwell equations in vacuum are not equal to zero}. In other words, the operator $S^\omega$ produces {\it electric and magnetic charges and currents in vacuum}, as the reader can verify in Theorems 3.2, 3.6 of reference \cite{sald5} or equivalently, equations (3.23), (3.25) of \cite{sald5}.

The study of the Yang--Mills--scalar matter theory within the framework developed in this paper, together with its comparison with the existing results in the literature for other sectors of the Standard Model (such as the electroweak sector and the strong sector) or the full Standard Model or Grand Unification Theories over the Moyal--Weyl algebra, will be the subject of forthcoming publications.

It is worth mentioning that we have chosen the Moyal--Weyl algebra $B$ as the quantum base space of the trivial qpb’s used to illustrate our theory in this section solely because this algebra is one of the most widely used models of non--commutative space--time in quantum gravity~\cite{05}. In principle, any other non--commutative algebra $B'$ may be used, provided that $B'$ is equipped with a differential calculus satisfying Definition~\ref{a.2.1}, together with a quantum Hodge operator and sufficiently well--understood commutation relations. For example, one could consider the $\kappa$--Minkowski space, which is also one of the most widely used models of non--commutative space--time in quantum gravity \cite{06}.

\section{Concluding Comments}

This work was developed under Durdevich's theory of qpb's and as we have mentioned in Section 2.2, there are mathematical reason to consider this approach and not Brzezi\'nski--Majid formulation \cite{libro}. In order of importance, these reasons are the following.
\begin{enumerate}
    \item The curvature $R^\omega$ of every qpc $\omega$ is a linear map with domain $\mathfrak{qg}^\#$, as the {\it dualization} of the classical case indicates.
    \item The curvature is a basic quantum differential form of type $\ad$, as in the {\it classical} case, i.e.,  $R^\omega$ $\in$ $\Mor(\ad,\Delta_\Hor)$. In terms of a {\it physical interpretation}, we can consider $R^\omega$ as a {\it non--commutative geometrical tensor field}.
    \item When $\mathfrak{qg}^\#$ is a finite--dimensional $\C$--vector space, we can apply all the theory of associated qvb's for the $H$--corepresentation $\ad$. Particularly, we can use the inner products of equations (\ref{4.f2.23}), (\ref{4.f2.27}) and take the square norm of $R^\omega$, as in differential geometry. 
    \item The existence of the formal adjoint operators of $d^{\nabla^\omega_V}$ and $d^{\widehat{\nabla}^\omega_V}$ (Theorem~\ref{4.2.15}) is a consequence of Theorem~\ref{fgs}. As far as the author knows, with exception of homogeneous qpb's and the Chern connection (\cite{obauchalla2}),  there is no analog of Theorem~\ref{fgs} in the Brzezi\'nski--Majid formulation of quantum principal bundles. 
    
    Although, in principle, it should be possible to formulate such an analog with the same generality than Theorem~\ref{fgs}, the absence of this result complicates the corresponding procedure within the Brzezi\'nski--Majid framework.
    \item The operator $S^\omega$, which has no counterpart in Brzezi\'nski--Majid   formulation of quantum principal bundles, allows one to obtain the correct field equations for $R^\omega$, since, as shown in Theorem \ref{6.1.5}, this operator appears naturally when one considers the variation $R^{\omega+t\,\lambda}$.
\end{enumerate}

In contrast, in Brzezi\'nski--Majid formulation of qpb's
\begin{enumerate}
    \item The curvature $r^\omega$ of a qpc $\omega$ is defined on $H$, or equivalently on $\Ker(\epsilon)$, by $$g\longmapsto r^\omega(g)=d\omega(\pi(g))+\omega(\pi(g^{(1)}))\,\omega(\pi(g^{(2)})).$$ 
    \item The curvature $r^\omega$ is not a basic quantum form of type $\ad$, but rather a basic quantum form of type $\Ad$~(\cite{libro}). Therefore, from a \emph{physical perspective}, $r^\omega$ is not exactly a \emph{non--commutative geometrical tensor field}. The map $R^\omega$ appears to be a better choice for use in non--commutative geometrical gauge theory.
    \item  In general, the curvature $r^\omega$ is defined on an infinite--dimensional $\C$--vector space. Consequently, in order to define its squared norm, it is necessary to introduce an appropriate and general notion of convergence and ensure that the square norm of $r^\omega$ converges. If one wishes to avoid this complication, one may restrict attention to the cases in which $\Ker(\epsilon)$ is finite--dimensional; however, this would considerably limit the range of examples to which the theory applies.
    \item Since, in general, the Brzezi\'nski--Majid formulation of qpb’s does not deal with elements of degree~$2$ or higher, there is no general theory for the variation $r^{\omega+t\,\lambda}$, nor for the operators $d^{\nabla^\omega_V}$, $d^{\widehat{\nabla}^\omega_V}$. This complicates very much the procedure in this formulation.
\end{enumerate}

Of course, there are other important aspects of Durdevich's theory that enabled the formulation of our theory. For instance, the use of universal differential envelope $\ast$--calculus as quantum differential forms of $H$. This space not only allows us to extend the $\ast$--Hopf algebra structure of $H$ to $\Gamma^{\wedge}$, but it is also maximal with this property (\cite{micho1}). Moreover, it generalizes the graded differential  $\ast$--algebra of $\C$--valued differential forms of a compact matrix Lie group (\cite{appendix}). The universal differential envelope $\ast$--calculus and the extension of the coproduct $\Delta$ play a central role in defining the notion of differential calculus on qpb's, and in the definition of the $\ast$--algebras $\Hor^\bullet P$, $\Vert^\bullet P$, $\Omega^\bullet(M)$ in all degrees (not only for degree $1$), as well as in the definition of the $H$--corepresentation $\Delta_\H$ .

It is worth noticing that only missing ingredient for addressing the full geometry of particle physics is a formulation of spinorial matter. This is not straightforward. However, an interesting starting point is to follow the line of research of A. Connes in reference \cite{con}, namely, to consider a qpb with quantum base $B$ given by a spectral triple and $\Omega^\bullet(B)$ given by Connes' space of differential forms, and then to repeat the construction developed in this paper. A first step in this direction is presented in reference  \cite{saldcon}, in which we \emph{unify} Connes' formulation of fermionic matter with the field matter framework of this paper and we also compare Connes' formulation of Yang--Mills theory with the one presented here in a concrete example.

Now, let us talk about the operator $S^\omega$. First of all, it is worth emphasizing that an auxiliary map is required (the embedded differential $\Theta$) to define  $R^\omega$ and this implies the existence of the operator $S^\omega$. As we have seen, this operator  naturally appears in the non--commutative geometrical Bianchi identity and in the non--commutative geometrical Yang--Mills equation. In the \emph{dualization of the classical case via the pull-back}, every $\omega^\#_\mathrm{class}$ is regular and hence  $S^{\omega^\#_\mathrm{class}}=0$ (\cite{micho2}). So, the operator $S^\omega$ is identically zero in differential geometry.

Moreover, notice that we have asked for the existence of the adjoint operators of $d^{S^{\omega}}$ and $d^{\widehat{S}^{\omega}}$, not for an specific form of them. This is because, for the qpb and the differential calculus used in Sections 2, 3 of reference \cite{sald5}, we show that $$ d^{S^{\omega}\star}=(-1)^{k+1} \star^{-1}_\l\circ\, d^{S^{\omega}} \circ \star_\l,\qquad d^{\widehat{S}^{\omega}\star}=\ast \circ d^{S^{\omega}\star} \circ \ast;$$ however for the quantum bundle of  Section 4 we get that  (\cite{sald5}) 
\begin{equation}
\label{61}
    d^{S^{\omega}\star}\not=(-1)^{k+1} \star^{-1}_\l\circ\, d^{S^{\omega}} \circ \star_\l.
\end{equation}
Thus, we conclude that even for the same qpb with different differential calculus, there is not a general formula for the adjoint operators of $d^{S^{\omega}}$ and $d^{\widehat{S}^{\omega}}$ .

To conclude this section, we now discuss the quantum gauge group and the models presented in Section 4.  This paper is formulated under the \emph{philosophy} that, the actions that measure the squared norm of the curvature and the squared norm of the induced linear connection are the central mathematical objects in Yang--Mills scalar matter theory. The group of symmetries of these actions is a secondary object, determined by the actions. 

This is a non--standard way to proceed. In physics, it is common to first choose a Lie group $G$ and then construct an action (by determining a principal $G$--bundle) in such a way that $G$ becomes the symmetry group (or a subgroup of it) of the action. In this paper, however, we first choose a quantum principal $\G$--bundle, then consider the corresponding actions, and finally characterize the symmetry groups of these actions, since not every quantum gauge transformation is a symmetry.

 Nevertheless, as discussed in Section 4.1, the Definitions~\ref{6.1.3} and~\ref{6.1.13} provide proper generalizations of the symmetry group appearing in differential geometry. It is worth noticing that in reference \cite{saldcon} we define the quantum gauge group of the fermionic models in the same way as here, for the same reasons.

\begin{appendix}
\section{}
In this appendix we are going to show the explicit proof of the following statement.
\begin{Proposition}
\label{a.1}
Let $\delta^V$ $\in$ $\Obj(\Rep_{H})$. Then the maps $\langle-|-\rangle^ \bullet_\l$, $\langle-|-\rangle^ \bullet_\r$ (see equations (\ref{4.f2.23}), (\ref{4.f2.27})) are well--defined inner products.
\end{Proposition}
\begin{proof}
The only part of the statement that it is not trivial is
the positive--definiteness; so let us proceed to prove it.  Let $\psi=\displaystyle\sum^m_{k=1} \mu_k\otimes_B T_k$ $\in$ $\Omega^\bullet(B)\otimes_B E^V_\l$ such that $\langle\psi\mid\psi\rangle^\bullet_\l=0$. Thus, $\tau:=\Upsilon^{-1}_{V}(\psi)=\displaystyle \sum^m_{k=1} \mu_k\,T_k$ $\in$ $\Mor(\delta^V,\Delta_\H)$ and by equation (\ref{3.f7}) we have $$\psi=\displaystyle\sum^m_{k=1} \mu_k\otimes_B T_k=\sum^{d_{V}}_{k=1} \mu^\tau_k\otimes_B T^\l_k,$$ where $\mu^\tau_k=\displaystyle \sum^{n_{V}}_{i=1}\tau(e_i)\,x^{V\,\ast}_{ki}$. Hence, by equations (\ref{ashcas}), (\ref{generators}) we obtain
 \begin{eqnarray*}
\langle\psi,\psi\rangle^\bullet_\l =  \sum^{d_{V}}_{k,j=1} \langle\mu^\tau_k\otimes_B T^\l_k , \mu^\tau_j\otimes_B T^\l_j \rangle^\bullet_\l
  &= &
  \sum^{d_{V}}_{k,j=1}
  \langle   \mu^\tau_k\, (T^\l_k, T^\l_j)_\l,  \mu^\tau_j\rangle_\l
  \\
  &= &
  \sum^{d_{V},n_{V}}_{k,j,l=1}\langle \mu^\tau_k\,x^{V}_{kl}\,x^{V\,\ast}_{jl},  \mu^\tau_j\rangle_\l 
  \\
  &= &
\sum^{d_{V},n_{\alpha}}_{k,j,l,i=1}\langle \tau(e_i)\,x^{V\,\ast}_{ki}\,x^{V}_{kl}\,x^{V\,\ast}_{jl},  \mu^\tau_j\rangle_\l 
  \\
  &= &
\sum^{d_{V},n_{V}}_{j,l,i=1}\langle \tau(e_i)\,\delta_{il}\,x^{V\,\ast}_{jl},  \mu^\tau_j\rangle_\l
\\
  &= &
 \sum^{d_{V},n_{V}}_{j,i=1}\langle \tau(e_i)\,\,x^{V\,\ast}_{ji},  \mu^\tau_j\rangle_\l 
 \\
  &= & 
 \sum^{d_{V}}_{j=1}\langle \mu^\tau_j,  \mu^\tau_j\rangle_\l.
\end{eqnarray*}
In this way $$0=\langle\psi\mid\psi\rangle^\bullet_\l=\int_B \langle\psi,\psi\rangle^\bullet_\l \,\dvol= \sum^{d_{V}}_{j=1} \int_B\langle \mu^\tau_j,  \mu^\tau_j\rangle_\l\,\dvol,$$ which implies that $\langle \mu^\tau_j,  \mu^\tau_j\rangle_\l=0$ for all $j$ $\in$ $\{1,...,d_V \}$; so $\mu^\tau_j=0$ for all $j$ and therefore, $\psi=0$.

Let $\phi=\displaystyle\sum^m_{k=1} T_k\otimes_B \mu_k$ $\in$ $E^V_\l\otimes_B\Omega^\bullet(B)$ such that $\langle\phi\mid\phi\rangle^\bullet_\r=0$.  Thus, $\eta:=\widehat{\Upsilon}^{-1}_{V}(\phi)=\displaystyle \sum^m_{k=1} T_k\,\mu_k$ $\in$ $\Mor(\delta^V,\Delta_\H)$ and by equation (\ref{3.f7.5}) we have $$\phi=\displaystyle\sum^m_{k=1} T_k\otimes_B\mu_k =\sum_{k} T^\r_k\otimes_B (\mu^{\eta\ast}_k)^\ast,$$ where $\mu^{\eta\ast}_k=\displaystyle \sum_{i}\eta(e_i)^\ast\,x^{\overline{V}\,\ast}_{ki}$. Here, $x^{\overline{V}}_{ki}=T^\l_k(e_i)$, where $\{T^\l_k\}$ is the set of generators of equation (\ref{generators}) for the $H$--corespresentation $\delta^{\overline{V}}$ (the conjugate corepresentation of $\delta^V$), and $T^\r_k=T^{\l\,\ast}_k$. Hence, by equations (\ref{ashcas1}), (\ref{generators}) we obtain

\begin{eqnarray*}
\langle\phi,\phi\rangle^\bullet_\r =  \sum_{k,j} \langle T^\r_k\otimes_B (\mu^{\eta\ast}_k)^\ast ,  T^\r_j \otimes_B (\mu^{\eta\ast}_k)^\ast\rangle^\bullet_\r
  &=& 
  \sum_{k,j}
  \langle   \mu^{\eta\ast}_k, (( T^\r_k, T^\r_j)_\r\; (\mu^{\eta\ast}_j)^\ast)^\ast\rangle_\l 
  \\
  &= &
  \sum_{k,j}
  \langle   \mu^{\eta\ast}_k, \mu^{\eta\ast}_j\, ( T^\r_k, T^\r_j)^\ast_\r\; \rangle_\l 
  \\
  &= &
  \sum_{k,j,l}
  \langle   \mu^{\eta\ast}_k, \mu^{\eta\ast}_j\, T^\r_j(e_l)^\ast\, T^\r_k(e_l)\; \rangle_\l 
  \\
  &= &
  \sum_{k,j,l}
  \langle   \mu^{\eta\ast}_k, \mu^{\eta\ast}_j\,x^{\overline{V}}_{jl}\, x^{\overline{V}\,\ast}_{kl}\, \rangle_\l 
  \\
  &= &
 \sum_{k,j,l,i}
  \langle   \mu^{\eta\ast}_k, \eta(e_i)^\ast\,x^{\overline{V}\,\ast}_{ji}\,x^{\overline{V}}_{jl}\, x^{\overline{V}\,\ast}_{kl}\, \rangle_\l  
  \\
  &= &
\sum_{k,l,i}
  \langle   \mu^{\eta\ast}_k, \eta(e_i)^\ast\,\delta_{il}\, x^{\overline{V}\,\ast}_{kl}\, \rangle_\l 
\\
  &= &
 \sum_{k,i}
  \langle   \mu^{\eta\ast}_k, \eta(e_i)^\ast\, x^{\overline{V}\,\ast}_{ki}\, \rangle_\l 
 \\
  &= &
 \sum_{k}\langle \mu^{\eta\ast}_k ,  \mu^{\eta\ast}_k\rangle_\l.
\end{eqnarray*}
In this way $$0=\langle\phi\mid\phi\rangle^\bullet_\r=\int_B \langle\phi,\phi\rangle^\bullet_\r \,\dvol= \sum_{k} \int_B\langle \mu^{\eta\ast}_k ,  \mu^{\eta\ast}_k\rangle_\l\,\dvol,$$ which implies that   $\langle \mu^{\eta\ast}_k ,  \mu^{\eta\ast}_k\rangle_\l=0$ for all $k$; so $\mu^{\eta\ast}_k=0$ and therefore, $\phi=0$.
\end{proof}

\section{}

In this appendix we are going to deal with adjointability on finitely generated projective modules.

Let $(A,\cdot,\mathbbm{1},\ast)$ be a $\ast$--algebra. The map 
\begin{equation}
\label{canoleft2}
(-,-)^{d}_\l:A^{d}\times A^{d} \longrightarrow A,\qquad (\overline{x},\overline{y})\longmapsto ( \overline{x}, \overline{y})^{d}_\l:=\sum^{d}_{i=1} x_i\,y^\ast_i
\end{equation}
is the \emph{canonical} non--degenerate Hermitian structure on the free left $A$--module $$A^d=\underbrace{A\times A\times \cdots \times A}_{d-times} $$ Here, $\overline{x}=(x_1,...,x_d)$, $\overline{y}=(y_1,...,y_d)$ with $x_i$, $y_i$ $\in$ $A$.

If $M_d(A)$ denotes the space of $d\times d$ matrices with entries in $A$ and $\End(A^d)$ denotes the space of all left $A$--module endomorphisms of $A^d$, then $$\End(A^d)\cong M_d(A)$$ as rings. In fact, if $\beta:=\{\overline{e}_i \}^d_{i=1}$ is the canonical basis of $A^d$, then the previous ring isomorphism is given by $$C \longleftrightarrow C_\beta:=(C_{ij}),$$ where $C(\overline{e}_i)=\displaystyle \sum^d_{j=1} \overline{e}_j\,C_{ij}$. Furthermore, for every $\overline{x}$ $\in$ $A^d$ we have $$C(\overline{x})=\overline{x}\cdot C_\beta \qquad \mbox{ and }\qquad U(C(\overline{x}))=\overline{x}\cdot C_\beta \cdot U_\beta $$ for every $C$, $U$ $\in$ $\End(A^d)$. In addition,  for every $C_\beta$ $\in$ $M_d(A)$, a direct calculation shows that $$(\overline{x}\cdot C_\beta,\overline{y})^d_\l=(\overline{x},\overline{y}\cdot C^\dagger_\beta)^d_\l,$$ i.e., $C\cong C_\beta$ is $A$--adjointable with respect to $(-,-)^{d}_\l$. Here, $\dagger$ denotes the composition of the $\ast$ operation with the usual matrix transposition; and the $A$--adjoint operator of $C$ is the left $A$--module morphism $$C^{\star}:A^d\longrightarrow A^d$$ induced by the matrix $C^{\dagger}_\beta$.

Let  $\rho$ $\in$ $\End(A^d)$ be an idempotent element. Then $$\rho(A^d)=A^d \cdot \rho_\beta$$ is a finitely generated projective left $A$--module (\cite{lan}). Moreover, if $\rho_\beta=\rho^\dagger_\beta$, the map $(-,-)^{d}_\l$ induces a non--degenerate Hermitian structure (Section 4.3 Proposition 22 of reference \cite{lan})
$$(-,-)_\l:\rho(A^d) \times \rho(A^d) \longrightarrow A.$$ 

\begin{Proposition}
\label{a.2}
    Every left $A$--module endomorphism of $\rho(A^d)$ is $A$--adjointable  with respect to $(-,-)_\l$.
\end{Proposition}
\begin{proof}
    Let $C: \rho(A^d)\longrightarrow \rho(A^d)$ be a left $A$--module morphism and consider the map
    \begin{equation*}
        C^{\mathrm{ext}}: A^d\longrightarrow A^d,\qquad
        \overline{a} \longmapsto C(\rho(\overline{a})).
    \end{equation*}
    Since $C$ and $\rho$ are left $A$--linear maps, it follows that $C^{\mathrm{ext}}$ $\in$ $\End(A^d)$. We claim $$C=\rho\circ C^{\mathrm{ext}}\circ \rho.$$ Indeed, let $\overline{x}$ $\in$ $\rho(A^d)$. Then  $\overline{x}=\rho(\overline{a})$ for some $\overline{a}$ $\in$ $A^d$ and hence $ C(\overline{x})=C(\rho(\overline{a}))$. On the other hand,
    \begin{eqnarray*}
        (\rho\circ C^{\mathrm{ext}}\circ \rho)(\overline{x})=(\rho\circ C^{\mathrm{ext}}\circ \rho)(\rho(\overline{a}))=\rho(C^{\mathrm{ext}}(\rho(\rho(\overline{a}))))&=&\rho(C^{\mathrm{ext}}(\rho(\overline{a})))
        \\
        &=&
        \rho(C(\rho(\rho(\overline{a}))))
        \\
        &=&
        \rho(C(\rho(\overline{a}))).
    \end{eqnarray*}
   However, $\Im(C)$ $\subseteq$ $\rho(A)$, so $\rho(C(\rho(\overline{a})))=C(\rho(\overline{a}))=C(\overline{x})$ and therefore $$C(\overline{x})=(\rho\circ C^{\mathrm{ext}}\circ \rho)(\overline{x}).$$ This proves our claim. 

   In this way, by considering the ring isomorphism $\End(A^d)\cong M_d(A)$ we get $$C\cong \rho_\beta\cdot C^{\mathrm{ext}}_\beta\cdot \rho_\beta$$ and since $\rho_\beta=\rho^\dagger_\beta$, we obtain $$(\overline{a}\cdot\rho_\beta\cdot (\rho_\beta\cdot C^{\mathrm{ext}}_\beta \cdot \rho_\beta),\overline{b}\cdot\rho_\beta)_\l=(\overline{a}\cdot\rho_\beta ,\overline{b}\cdot\rho_\beta \cdot(\rho_\beta\cdot C^{\mathrm{ext}\,\dagger}_\beta \cdot \rho_\beta))_\l. $$ Hence, the induced map $$C^{\star}: \rho(A^d)\longrightarrow \rho(A^d)$$ of the matrix $\rho_\beta\cdot C^{\mathrm{ext}\,\dagger}_\beta \cdot \rho_\beta$ is the $A$--adjoint operator of $C$.
\end{proof}

Let $M$ be a finitely generated projective left $A$--module. According to Section 4.2 of reference \cite{lan}, there exists $d$ $\in$ $\N$ and an idempotent element $\rho$ $\in$ $\End(A^d)$ such that $$M\cong\rho(A^d)=A^d\cdot \rho_\beta$$ as left $A$--modules. By the last proposition, it follows that

\begin{Proposition}
    \label{a.3}
    If $\rho_\beta=\rho^\dagger_\beta$, then every left $A$--module endomorphism $C$ of $M$ is $A$--adjointable with respect to the Hermitian structure induced by $(-,-)^{d}_\l$. The $A$--adjoint operator of $C$ will be denoted by $C^{\star}$.
\end{Proposition}

Consider the free left $A$--module $A^d\otimes_A A^s$. This module is canonically isomorphic to $A^{ds}$ by means of $$\overline{\beta}_i\otimes_A \overline{\alpha}_j\;\;\longleftrightarrow\;\; \overline{e}_{(i-1)s+j},$$ where $\{\overline{e}_k\}^{ds}_{k=1}$, $\{\overline{\beta}_k\}^{d}_{k=1}$, $\{\overline{\alpha}_k\}^{s}_{k=1}$ are the canonical basis of $A^{ds}$, $A^{d}$, $A^{s}$, respectively. In this way, the canonical Hermitian structure $$(-,-)^{ds}_\l: A^{ds}\times A^{ds}\longrightarrow A^{ds}$$ defines the Hermitian structure $$(-,-)^{\otimes}_\l: (A^d\otimes_A A^s)\times (A^d\otimes_A A^s)\longrightarrow A $$ given by $$(\overline{x}\otimes_A \overline{y},\overline{u}\otimes_A \overline{v})^{\otimes}_\l:=(\overline{x}\,(\overline{y},\overline{v})^s_\l,\overline{u})^d_\l.$$

Let $M\cong \rho(A^d)=A^d\cdot \rho_\beta$, $N\cong \varsigma(A^s)=A^s\cdot \varsigma_\alpha$ be two finitely generated projective left $A$--modules with $\rho^\dagger_\beta=\rho_\beta$ and  $\varsigma^\dagger_\alpha=\varsigma$. Assume that $M\otimes_A N$ is a finitely generated projective left $A$--module too by embedding it in $A^d\otimes_A A^s\cong A^{ds}$ and its associated idempotent matrix is self--adjoint as well. Then by Proposition \ref{a.3} we get

\begin{Proposition}
    \label{a.6}
    Every left $A$--module morphism $$C: M\otimes_A N\longrightarrow M\otimes_A N$$ is $A$--adjointable with respect to the induced Hermitian structure of $(-,-)^{\otimes}_\l$ on $M\otimes_A N$.
\end{Proposition}

Of course, there are similar results for right $A$--modules and the \emph{canonical} right Hermitian structure on $A^d$ given by 
    \begin{equation}
    \label{canoright2}
     (-,-)^{d}_\r:A^{d}\times A^{d} \longrightarrow A,\qquad (\overline{x},\overline{y})\longmapsto (\overline{x},\overline{y})^{d}_\r:=\displaystyle  \sum^{d}_{i=1} x^\ast_i\,y_i.
    \end{equation}
 In particular, consider the free right $A$--module $A^d\otimes_A A^s$. This module is canonically isomorphic to $A^{ds}$ by means of $$\overline{\beta}_i\otimes_A \overline{\alpha}_j\;\;\longleftrightarrow\;\; \overline{e}_{(i-1)s+j}.$$ In this way, the canonical Hermitian structure $$(-,-)^{ds}_\r: A^{ds}\times A^{ds}\longrightarrow A^{ds}$$ defines the Hermitian structure $$(-,-)^{\otimes}_\r: (A^d\otimes_A A^s)\times (A^d\otimes_A A^s)\longrightarrow A $$ given by $$(\overline{x}\otimes_A \overline{y},\overline{u}\otimes_A \overline{v})^{\otimes}_\r:=(\overline{x},\,(\overline{y},\overline{v})^s_\r\,\overline{u})^d_\r.$$

Let $M\cong \rho(A^d)=\rho_\beta\cdot A^d $, $N\cong \varsigma(A^s)=\varsigma_\alpha \cdot A^s$ be two finitely generated projective right $A$--modules with $\rho^\dagger_\beta=\rho_\beta$ and $\varsigma^\dagger_\alpha=\varsigma$. Assume that $M\otimes_A N$ is a finitely generated projective right $A$--module too by embedding it in $A^d\otimes_A A^s\cong A^{ds}$ and its associated idempotent matrix is self--adjoint as well. Then we have

\begin{Proposition}
    \label{a.7}
    Every right $A$--module morphism $$C: M\otimes_A N\longrightarrow M\otimes_A N$$ is $A$--adjointable with respect to the induced Hermitian structure of $(-,-)^{\otimes}_\r$ on $M\otimes_A N$.
\end{Proposition}
 Of course, the proof of the last proposition is completely similar to one of Proposition \ref{a.6}.

\section{}

In this appendix we check that the theory develop in this paper is a proper generalization in non--commutative geometry of the \emph{classical} theory of Yang--Mills scalar matter fields (\cite{gtvp}).

Let $\pi:GM\longrightarrow M$ be a principal $G$--bundle and let $\alpha^V:G\longrightarrow GL(V)$ be a unitary finite--dimensional representation. Consider the associated vector bundle $$\pi_{\alpha^V}:E^V\longrightarrow M$$ and its \emph{canonical} Hermitian structure (\cite{gtvp}) 
$$(-,-):\Gamma(E^V)\times \Gamma(E^V)\longrightarrow C^\infty_\C(M).$$ Then, the induced linear connection $\nabla^{\omega_\mathrm{class}}_V$ (see equation (\ref{dif7})) is Hermitian with respect to $(-,-)$ \cite{gtvp}.

For a complex--extension of a Riemannian metric on $M$, there is a $C^\infty_\C(M)$--valued inner product on $\C$--valued differential forms (antilinear in the first coordinate)
$$\langle-,-\rangle:\Omega^\bullet_\C(M)\times \Omega^\bullet_\C(M)\longrightarrow C^\infty_\C(M)$$ and using the \emph{usual} integration on $\Omega^4_\C(M)$, we obtain an inner product
$$\langle-|-\rangle:=\int_M \langle-,-\rangle\,\dvol: \Omega^\bullet_\C(M)\times \Omega^\bullet_\C(M)\longrightarrow \C.$$ Notice that associated with $\langle-,-\rangle$, we have the star Hodge operator  $\star$ (\cite{gtvp}).

Now, there is a $C^\infty_\C(M)$--valued inner product on $E^V$--valued differential forms
$$\langle-,-\rangle^\bullet: (\Omega^\bullet_\C(M)\otimes_{C^\infty_\C(M)}\Gamma(E^{V})) \times (\Omega^\bullet_\C(M)\otimes_{C^\infty_\C(M)}\Gamma(E^{V}))\longrightarrow C^\infty_\C(M)$$ given by  
$$\langle \mu_1\otimes_B s_1,\mu_2\otimes_B s_2\rangle^\bullet= \langle\mu_1 (s_1,s_2),\mu_2\rangle,$$ and an inner product 
$$\langle-|-\rangle^\bullet:=\int_M \langle-,-\rangle^\bullet\,\dvol: (\Omega^\bullet_\C(M)\otimes_{C^\infty_\C(M)}\Gamma(E^{V})) \times (\Omega^\bullet_\C(M)\otimes_{C^\infty_\C(M)}\Gamma(E^{V}))\longrightarrow \C.$$

Using equations (\ref{dif1}), (\ref{dif2}), we get 
\begin{equation}
    \label{c.1}
    (-,-)_Q:\Mor(\delta^V,\Delta_P)\times \Mor(\delta^V,\Delta_P)\longrightarrow C^\infty_\C(M),
\end{equation}
a $C^\infty_\C(M)$--valued inner product on  $\Omega^\bullet_\C(M)\otimes_{C^\infty_\C(M)}\Mor(\delta^V,\Delta_P)$
\begin{equation}
    \label{c.2}
    \langle-,-\rangle^\bullet_Q,
\end{equation}
and an inner product on $\Omega^\bullet_\C(M)\otimes_{C^\infty_\C(M)}\Mor(\delta^V,\Delta_P)$
\begin{equation}
    \label{c.3}
    \langle-|-\rangle^\bullet_Q:=\int_M \langle-,-\rangle^\bullet_Q\,\dvol,
\end{equation}
where $\delta^V$ is the corepresentation given by the pull--back of $\alpha^V$ and $\Delta_P$ is the pull--back of the complex--extension of the canonical $G$--action on $GM$.

Let $\omega_\mathrm{class}$ be a principal connection of $\pi:GM\longrightarrow M$. Consider $\ad_{\mathrm{class}}$ the complex--extension of the right adjoint action of $G$ on its Lie algebra $\mathfrak{g}$. Then, the complex--extension $\Omega^{\omega_\mathrm{class}}$ of the curvature  of $\omega_\mathrm{class}$ is a basic differential $2$--form of type $\ad_{\mathrm{class}}$ (\cite{nodg}), i.e., 
\begin{equation*}
    \Omega^{\omega_\mathrm{class}}\;\in\; \Omega^\bullet_\C(GM,\mathfrak{g}_\C)^G.
\end{equation*}
Then, by equation (\ref{dif3}), we have
\begin{equation*}
GP(\Omega^{\omega_\mathrm{class}})\;\in\;\Omega^2_\C(M)\otimes_{C^\infty_\C(M)}\Gamma(E^{\mathfrak{g}_\C})
\end{equation*}
and by equation (\ref{dif5}) we obtain
\begin{equation}
    \label{c.4}
    R^{\omega^\#_\mathrm{class}}:=\#(\Omega^{\omega_\mathrm{class}})\;\in\; \Mor(\ad^\#_\mathrm{class},\Delta_\Hor)^G
\end{equation}
which is the map of equation (\ref{classcur2}); and by equation  (\ref{dif6}) we obtain
\begin{equation}
    \label{c.5}
\Upsilon_{\mathfrak{g}^\#_\C}(R^{\omega^\#_\mathrm{class}})\;\in\;\Omega^2_\C(M)\otimes_{C^\infty_\C(M)}\Mor(\ad^\#_{\mathrm{class}},\Delta_P).
\end{equation}

The \emph{classical} Yang--Mills functional is defined as
\begin{equation}
\label{c.6}
    \qS_\mathrm{YM}:\mathfrak{pc}(\pi)\longrightarrow \R,\qquad \omega_\mathrm{class}\longmapsto -{1\over 2}||\Omega^{\omega_\mathrm{class}}||^{\bullet\, 2},
\end{equation}
where $$||\Omega^{\omega_\mathrm{class}}||^{\bullet\, 2}:=||GP(\Omega^{\omega_\mathrm{class}})||^{\bullet\, 2}=\langle GP(\Omega^{\omega_\mathrm{class}})\mid GP(\Omega^{\omega_\mathrm{class}})\rangle^\bullet$$ and $\mathfrak{pc}(\pi)$ is the space of principal connections of the principal $G$--bundle $\pi:GM\longrightarrow M$ \cite{gtvp}. Moreover, critical points of this functional are characterized by the \emph{classical} Yang--Mills equation (\cite{gtvp})
\begin{equation}
\label{c.6.5}
d^{\nabla^{\omega_\mathrm{class}}_{\mathfrak{g}_\C} \star}(\Omega^{\omega_\mathrm{class}})=0\qquad \mbox{ with }\qquad   d^{\nabla^{\omega_\mathrm{class}}_{\mathfrak{g}_\C}\star}(\Omega^{\omega_\mathrm{class}}):=d^{\nabla^{\omega_\mathrm{class}}_{\mathfrak{g}_\C}\star}(GP(\Omega^{\omega_\mathrm{class}})), 
\end{equation}
where 
\begin{equation}
    \label{c.7}
    d^{\nabla^{\omega_\mathrm{class}}_{\mathfrak{g}_\C}\star}
\end{equation}
is the formal adjoint operator with respect to $\langle-|-\rangle^\bullet$ of the exterior derivative of the induced linear connection (see equations (\ref{dif7}), (\ref{dif8})) \cite{gtvp}.

However, by construction, $R^{\omega^\#_\mathrm{class}}$ is equivalent to $\Omega^{\omega_\mathrm{class}}$, $\Upsilon_{\mathfrak{g}_\C}(R^{\omega^\#_\mathrm{class}})$ is equivalent to $GP(\Omega^{\omega_\mathrm{class}})$ and $\langle-|-\rangle^\bullet_Q$ is equivalent to $\langle-|-\rangle^\bullet$; hence
\begin{equation*}
    ||R^{\omega^\#_\mathrm{class}}||^{\bullet\, 2}_Q:=||\Upsilon_{\mathfrak{g}_\C}(R^{\omega^\#_\mathrm{class}})||^{\bullet\, 2}_Q=||\Omega^{\omega_\mathrm{class}}||^{\bullet\, 2}.
\end{equation*}
This implies that the \emph{classical} Yang--Mills function is equivalent to 
\begin{equation}
    \label{c.9}
     \omega^\#_\mathrm{class}\longmapsto -{1\over 2}||R^{\omega^\#_\mathrm{class}}||^{\bullet\, 2}_Q,
\end{equation}
and therefore, critical points of the functionals of equations (\ref{c.6}), (\ref{c.9}) are the same, through the bijection 
$$\{ \omega_\mathrm{class}\}\;\;\longleftrightarrow \;\; \{ \omega^\#_\mathrm{class}\}.$$ In particular, the \emph{classical} Yang--Mills equation is equivalent to 
\begin{equation}
    \label{c.10}
    d^{\nabla^{\omega^\#_\mathrm{class}}_{\mathfrak{g}_\C}\star}(R^{\omega^\#_\mathrm{class}})=0\qquad \mbox{ with }\qquad  d^{\nabla^{\omega^\#_\mathrm{class}}_{\mathfrak{g}_\C}\star}(R^{\omega^\#_\mathrm{class}}):=d^{\nabla^{\omega^\#_\mathrm{class}}_{\mathfrak{g}_\C}\star}(\Upsilon_{\mathfrak{g}_\C}(R^{\omega^\#_\mathrm{class}})),
\end{equation}
where 
\begin{equation}
    \label{c.11}
    d^{\nabla^{\omega^\#_\mathrm{class}}_{\mathfrak{g}_\C}\star}
\end{equation}
is the formal adjoint operator with respect to $\langle-|-\rangle^\bullet_Q$ of the operator of equation (\ref{dif9}), which is, by construction, equivalent to the operator of equation (\ref{c.7}).

Equations (\ref{c.9}), (\ref{c.10}) (or equivalently, equations (\ref{c.6}), (\ref{c.6.5})) are the \emph{classical} counterparts of the non--commutative geometrical Yang--Mills action and the non--commutative geometrical Yang--Mills equation. This shows that the theory of Section 4.1 is a proper generalization in non--commutative geometry of the Yang--Mills theory in differential geometry.

In particular, we can obtain equation (\ref{c.10}) from equation (\ref{6.f1.1}) just recalling that for a given principal connection $\omega_\mathrm{class}$, the complex--extension of its pull--back $\omega^\#_\mathrm{class}$ is a regular and multiplicative quantum principal connection (\cite{micho1,micho2}),  and hence $$D^{\omega^\#_\mathrm{class}}=\widehat{D}^{\omega^\#_\mathrm{class}} \qquad \mbox{ and }\qquad S^{\omega^\#_\mathrm{class}}=\widehat{S}^{\omega^\#_\mathrm{class}}=0.$$ Moreover, the entire left structure is equivalent, through the canonical flip, to the right
structure and therefore, equation  (\ref{6.f1.1}) turns into
\begin{eqnarray}
\label{c.12}
    0= \,\mathrm{Re}\left( \langle d^{\nabla^{\omega^\#_\mathrm{class}}_{\mathfrak{g}_\C}\star}(R^{\omega^\#_\mathrm{class}})\mid  \Upsilon_{\mathfrak{g}_\C}(\lambda^\#_\mathrm{class})\rangle^\bullet_Q \right)
\end{eqnarray}
for all $\lambda^\#_\mathrm{class}$ $\in$ $\overrightarrow{\mathfrak{qpc}(\zeta)}$ (see equation (\ref{2.f24.2})). The notation $\lambda^\#_\mathrm{class}$ for elements of $\overrightarrow{\mathfrak{qpc}(\zeta)}$ is because in this case (the \emph{classical} case), every element of $\overrightarrow{\mathfrak{qpc}(\zeta)}$ comes from the pull--back of a basic differential $1$--form $\lambda_\mathrm{class}$ of type $\ad_\mathrm{class}$.

By equation (\ref{twcoder2}) and since $d^{\nabla^{\omega^\#_\mathrm{class}}_{\mathfrak{g}_\C}\star}(R^{\omega^\#_\mathrm{class}})$ is a $1$--form, we conclude that  $$d^{\nabla^{\omega^\#_\mathrm{class}}_{\mathfrak{g}_\C}\star}(R^{\omega^\#_\mathrm{class}}) \;\in\; \Upsilon_{\mathfrak{g}_\C}(\overrightarrow{\mathfrak{qpc}(\zeta)}).$$  Finally, since $\langle-|-\rangle^\bullet_Q$ is an inner product, equation (\ref{c.12}) implies equation (\ref{c.10}).

On the other hand, for a given principal connection $\omega_\mathrm{class}$, a unitary finite--dimensional representation $\alpha^V$ and a polynomical function ${\mathcal V}:\R\longrightarrow \R$, the \emph{classical} scalar matter functional is given by
\begin{equation}
\label{c.13}
\begin{aligned}
 \qS_\SM: \mathfrak{pc}(\pi)\times \Gamma(E^V)\longrightarrow \R,\qquad   (\omega_\mathrm{class},s)\longmapsto \dfrac{1}{2}\left(|| \nabla^{\omega_{\mathrm{class}}}_{V}s||^{\bullet \,2}-{\mathcal V}(s)\right),
\end{aligned}
\end{equation}
where $\nabla^{\omega_{\mathrm{class}}}_{V}$ is the induced linear connection (see equation (\ref{dif7})) and ${\mathcal V}(s):={\mathcal V}\circ \langle s\mid s\rangle^\bullet$. A variation of $$s\longmapsto s+s'$$ on this functional leads to (\cite{gtvp})
\begin{equation}
    \label{c.13.5}
\nabla^{\omega_{\mathrm{class}}\star}_{V}\left(\nabla^{\omega_{\mathrm{class}}}_{V}(s)\right)-{\mathcal V}'(s)s=0,
\end{equation}
where 
\begin{equation}
    \label{c.13.6}
    \nabla^{\omega_{\mathrm{class}}\star}_{V}
\end{equation}
is the formal adjoint operator of $\nabla^{\omega_{\mathrm{class}}}_{V}$ with respect to $\langle-|-\rangle^\bullet$ and ${\mathcal V}'$ is the derivative of ${\mathcal V}$; while a variation of $$\omega_\mathrm{class}\longmapsto \omega_\mathrm{class}+\lambda_\mathrm{class}$$ on this functional  leads to (\cite{gtvp})
\begin{equation}
\label{c.17}
0=\mathrm{Re}\left(\langle  {\alpha^{V}}'(\lambda_\mathrm{class})\,s\mid \nabla^{\omega_\mathrm{class}}_V \,s\rangle^\bullet\right),
\end{equation}
where $\lambda_\mathrm{class}$ is a basic differential $1$--form of type $\ad_\mathrm{class}$, and
\begin{equation}
    \label{c.17.1}
    \left({\alpha^{V}}'(\lambda_\mathrm{class})\,s\right)(X_p)=\left.{d\over dt}\right|_{t=0}\alpha^V(\mathrm{exp}(t\,\lambda_\mathrm{class}(Y_x)))\,s(p)
\end{equation}
for all $X_p$ $\in$ $T_p M$, with $Y_x$ $\in$ $T_x(GM)$ a lifting of $X_p$. It is worth mentioning that (\cite{gtvp})
\begin{equation}
    \label{c.17.2}
\nabla^{\omega_\mathrm{class}+\lambda_\mathrm{class}}_V(s)=\nabla^{\omega_\mathrm{class}}_V(s)+{\alpha^{V}}'(\lambda_\mathrm{class})\,s.
\end{equation}

Using equations (\ref{dif1}), (\ref{dif2}) and the fact that $\langle-|-\rangle^\bullet_Q$ is equivalent to $\langle-|-\rangle^\bullet$, we obtain 
$$|| \nabla^{\omega_{\mathrm{class}}}_{V}s||^{\bullet \,2}=|| \nabla^{\omega^\#_{\mathrm{class}}}_{V}T||^{\bullet \,2}_Q \qquad \mbox{ and }\qquad {\mathcal V}(s)={\mathcal V}(T),$$
where $$T=\#(GP^{-1}_0(s))\;\in\;\Mor(\delta^V,\Delta_P),$$ the operator $\nabla^{\omega^\#_{\mathrm{class}}}_{V}$ is the one of equation (\ref{dif9}) and ${\mathcal V}(T):={\mathcal V}\circ \langle T\mid T\rangle^\bullet_Q$. This implies that the \emph{classical} scalar matter action is equal to
\begin{equation}
\label{c.14}
\begin{aligned}
  (\omega^\#_\mathrm{class},T)\longmapsto \dfrac{1}{2}\left(|| \nabla^{\omega^\#_{\mathrm{class}}}_{V}T||^{\bullet \,2}_Q-{\mathcal V}(T)\right),
\end{aligned}
\end{equation}
and therefore, critical points of the functionals of equations (\ref{c.13}), (\ref{c.14}) are the same, through the bijection 
$$\{ (\omega_\mathrm{class},s)\}\;\;\longleftrightarrow \;\; \{ (\omega^\#_\mathrm{class},T)\}.$$ In particular, a variation of $$T+U$$ on the functional of equation (\ref{c.14}) leads to
\begin{equation}
    \label{c.15}
\nabla^{\omega^\#_{\mathrm{class}}\star}_{V}\left(\nabla^{\omega^\#_{\mathrm{class}}}_{V}(T)\right)-{\mathcal V}'(T)T=0,
\end{equation}
where 
\begin{equation}
    \label{c.16}
    \nabla^{\omega^\#_{\mathrm{class}}\star}_{V}
\end{equation}
is the formal adjoint operator of $\nabla^{\omega^\#_{\mathrm{class}}}_{V}$ with respect to $\langle-|-\rangle^\bullet_Q$, which is, by construction, equivalent to the operator of equation (\ref{c.13.6}); while a variation of $$\omega^\#_\mathrm{class}\longmapsto \omega^\#_\mathrm{class}+\lambda^\#_\mathrm{class} $$
on the functional of equation (\ref{c.14}) leads to 
\begin{equation}
\label{c.17.3}
0=\mathrm{Re}\left(\langle  \nabla^{\omega^\#_\mathrm{class}}_V \,T\mid \Upsilon_V(K^{\lambda^\#_\mathrm{class}}(T))\rangle^\bullet_Q\right).
\end{equation}
Notice that by comparing equation (\ref{c.17.2}) with the first part of equation (\ref{nedded1}), the operator equivalent to ${\alpha^{V}}'(\lambda_\mathrm{class})\,s$ in terms of the space $\Mor(\delta^V,\Delta_P)$ is $$\Upsilon_V(K^{\lambda^\#_\mathrm{class}}(T)). $$

Finally, combining all the results of this appendix, we conclude that the \emph{classical} Yang--Mills scalar matter functional
\begin{equation}
    \label{c.18}
    \qS_{\YMSM}=\qS_\YM+\qS_\SM
\end{equation}
is equivalent to the functional
\begin{equation}
    \label{c.19}
    (\omega^\#_\mathrm{class},T)\longmapsto {1\over 2}\left(-||R^{\omega^\#_\mathrm{class}}||^2_Q+|| \nabla^{\omega^\#_{\mathrm{class}}}_{V}T||^{\bullet \,2}_Q-{\mathcal V}(T) \right)
\end{equation}
and therefore, critical point of the functionals of equations (\ref{c.18}), (\ref{c.19}) are the same,  through the bijection 
$$\{ (\omega_\mathrm{class},s)\}\;\;\longleftrightarrow \;\; \{ (\omega^\#_\mathrm{class},T)\}.$$ It follows that the
\emph{classical} Yang--Mills scalar matter equations (\cite{gtvp})
\begin{equation}
    \label{c.20}
    \begin{aligned}
        0&=\mathrm{Re}\left( \langle d^{\nabla^{\omega_\mathrm{class}}_{\mathfrak{g}_\C}\star}(\Omega^{\omega_\mathrm{class}})\mid  GP(\lambda_\mathrm{class})\rangle^\bullet-\langle  {\alpha^{V}}'(\lambda_\mathrm{class})\,s\mid \nabla^{\omega_\mathrm{class}}_V \,s\rangle^\bullet \right)\\
0&=\nabla^{\omega_{\mathrm{class}}\star}_{V}\left(\nabla^{\omega_{\mathrm{class}}}_{V}(s)\right)-{\mathcal V}'(s)\,s
    \end{aligned}
\end{equation}
are equivalent to
\begin{equation}
    \label{c.21}
    \begin{aligned}
        0&=\mathrm{Re}\left( \langle d^{\nabla^{\omega^\#_\mathrm{class}}_{\mathfrak{g}_\C}\star}(R^{\omega^\#_\mathrm{class}})\mid  \Upsilon_{\mathfrak{g}_\C}(\lambda^\#_\mathrm{class})\rangle^\bullet_Q-  \langle  \nabla^{\omega^\#_\mathrm{class}}_V \,T\mid \Upsilon_V(K^{\lambda^\#_\mathrm{class}}(T))\rangle^\bullet_Q \right)\\
0&=\nabla^{\omega^\#_{\mathrm{class}}\star}_{V}\left(\nabla^{\omega^\#_{\mathrm{class}}}_{V}(T)\right)-{\mathcal V}'(T)\,T.
    \end{aligned}
\end{equation}

As in the case of the \emph{classical} Yang--Mills equation,  equation~(\ref{c.21}) can be obtained from equations~(\ref{6.f1.4}), (\ref{6.f1.5}), and~(\ref{6.f1.6}) by recalling that $\omega^\#_\mathrm{class}$ is a regular and multiplicative quantum principal connection, and that, in this particular case, the entire left structure is equivalent to the right structure via the canonical flip. 

We conclude that the theory of Section 4.2 is a proper
generalization in non--commutative geometry of the Yang–Mills scalar matter theory in differential geometry.\\
\end{appendix}

\end{document}